\renewcommand*\showkeyslabelformat[1]{%
\noexpandarg%
\StrSubstitute{#1}{ }{\textvisiblespace}[\TEMP]%
\begin{minipage}[t]{\marginparwidth}%
  \normalfont\small\ttfamily\(\{\){\color{red!50!black}\expandafter\seqsplit\expandafter{\TEMP}}\(\}\)\end{minipage}%
}
\newcommand{\takeout}[1]{\empty}
\title{On Finitary Functors}
\author{J.~Ad\'{a}mek, S.~Milius, L.~Sousa and T.~Wi{\ss}mann}
\thanks{J.~Ad\'amek was supported by the Grant Agency of the Czech Republic under the grant 19-009025. \\ S. Milius and T. Wi\ss\/mann acknowledge support by the Deutsche Forschungsgemeinschaft (DFG) under project MI~717/5-2.
\\L.~Sousa was partially supported by the Centre for Mathematics of the
University of Coimbra -- UID/MAT/00324/2019, funded by the Portuguese
Government through FCT/MEC and co-funded by the European Regional Development Fund through the Partnership Agreement PT2020.}
\address{%
  Department of Mathematics, Faculty of Electrical Engineering, Czech Technical University in Prague, Czech Republic\\[5pt]
  Lehrstuhl f\"ur Informatik 8 (Theoretische Informatik), Friedrich-Alexander-Universit\"at Er\-lan\-gen-N\"urn\-berg, Germany \\[5pt]
  CMUC, University of Coimbra,  Portugal \&  ESTGV, Polytechnic Institute of Viseu, Portugal\\[5pt]
}
\keywords{Finitely presentable object, finitely generatd object, (strictly) locally finitely presentable category, finitary functor, finitely bounded functor}
\date{\today}
\begin{document}
%
%
\FXRegisterAuthor{sm}{asm}{SM}
\FXRegisterAuthor{ja}{aja}{JA}
\FXRegisterAuthor{ls}{als}{LS}
\FXRegisterAuthor{tw}{twm}{TW}

\maketitle

\begin{abstract}
  A simple criterion for a functor to be finitary is presented: we call $F$
  finitely bounded if for all objects $X$ every finitely generated subobject of
  $FX$ factorizes through the $F$-image of a finitely generated subobject of
  $X$. This is equivalent to $F$ being finitary for all functors between
  `reasonable' locally finitely presentable categories, provided that $F$
  preserves monomorphisms. We also discuss the question when that last
  assumption can be dropped.
  The answer is affirmative for functors between categories such as Set,
  K-Vec (vector spaces), boolean algebras, and actions of any finite group
  either on Set or on K-Vec for fields K of characteristic 0.

  All this generalizes to locally $\lambda$-presentable categories,
  $\lambda$-accessible functors and $\lambda$-presentable algebras. As
  an application we obtain an easy proof that the Hausdorff functor on
  the category of complete metric spaces is $\aleph_1$-accessible.
\end{abstract}

\section{Introduction}
\label{sec:intro}

In a number of applications of categorical algebra, {\em finitary
  functors}, i.e.~functors preserving filtered colimits, play an
important role. For example, the classical varieties are precisely the
categories of algebras for finitary monads over $\Set$. How does one
recognize that a functor $F$ is finitary? For endofunctors of $\Set$
there is a simple necessary and sufficient condition: given a set $X$,
every finite subset of $FX$ factorizes through the image by $F$ of a
finite subset of $X$. This condition can be formulated for general
functors $F\colon \A\to \B$:
given an object $X$ of $\A$, every finitely generated subobject of $FX$ in $\B$ is required to factorize through the image by $F$  of a finitely generated subobject of $X$ in $\A$. We call such functors {\em finitely bounded}. For functors between locally finitely presentable categories which preserve monomorphisms we prove
$$\text{finitary} \iff \text{finitely bounded}$$
whenever finitely generated objects are finitely presentable. (The last condition is, in fact, not only sufficient but also necessary for the above equivalence.)

What about general functors, not necessarily preserving monomorphisms? We prove
the above equivalence whenever $\A$ is a strictly locally finitely presentable
category, see Definition \ref{D:strict}. Examples of such categories are sets,
vector spaces, group actions of finite groups, and $S$-sorted sets with $S$
finite. Conversely, if the above equivalence is true for all functors from $\A$
to $\Set$, we prove that a weaker form of
strictness holds for $\A$.

All of the above results can be also formulated for locally
$\lambda$-presentable categories and $\lambda$-accessible functors.
We use this to provide a
simple proof that the Hausdorff functor on the category of complete metric
spaces is countably accessible.

\paragraph{Acknowledgement.} 
We are very grateful to the anonymous referee: he/she
found a substantial simplification of the main definition (strictly and semi-strictly lfp
category) and pointed us to atomic toposes (see
Example \ref{E:strict}\ref{e:topos}).

We are also grateful for discussions about pure subobjects with John Bourke, Ivan
Di Liberti, and Ji\v{r}\'\i~Rosick\'y.

\section{Preliminaries}\label{sec:prel}

In this section we present properties of  finitely presentable and finitely generated objects which will be useful in the subsequent sections.

 Recall that an
object $A$ in a category $\A$ is called \emph{finitely presentable} if its
hom-functor $\A(A,-)$ preserves filtered colimits, and $A$ is called
\emph{finitely generated} if $\A(A,-)$ preserves filtered colimits of
monomorphisms -- more precisely, colimits of filtered diagrams $D\colon \D
\to \A$ for which $Dh$ is a monomorphism in $\A$ for every morphism
$h$ of $\D$.  

\begin{notation}
  For a category $\A$ we denote by
  \[
    \Afp \qquad\text{and}\qquad\Afg
  \]
  full subcategories of $\A$ representing (up to isomorphism) all finitely
  presentable and finitely generated objects, respectively.

  Subobjects $m\colon M \monoto A$ with $M$ finitely generated are called
  \emph{finitely generated subobjects}.
\end{notation}

Recall that $\A$ is a \emph{locally finitely presentable} category, shortly
\emph{lfp} category, if it is cocomplete, $\Afp$ is small, and every
object is a colimit of a filtered diagram in $\Afp$.

We now recall a number of standard facts about
lfp categories~\cite{AdamekR94}.

\begin{remark}\label{R:prelim}
  Let $\A$ be an lfp category.
  \begin{enumerate}
  \item \label{I:factorization} By~\cite[Proposition~1.61]{AdamekR94}, $\A$ has
    (strong epi, mono)-factorizations of morphisms.

  \item \label{I:canColim} By~\cite[Proposition~1.57]{AdamekR94}, every object
    $A$ of $\A$ is the colimit of its \emph{canonical filtered diagram}
    \[
      D_A\colon \Afp/A \to \A \qquad (P \xrightarrow{p} A) \mapsto P, 
    \]
    with colimit injections given by the $p$'s.

  \item By~\cite[Theorem~2.26]{AdamekR94}, $\A$ is a free completion
    of $\Afp$ under filtered colimits. That is, for every functor
    $H\colon \Afp \to \B$, where $\B$ has filtered colimits, there is an
    (essentially unique) extension of $H$ to a finitary functor
    $\bar{H}\colon\A\to\B $. Moreover, this extensions can be formed as
    follows: for every object $A\in \A$ put
    \[
      \bar{H} A = \colim H\cdot D_A.
    \]

  \item \label{I:colimono} By~\cite[Proposition~1.62]{AdamekR94}, a colimit of a filtered
    diagram of monomorphisms has monomorphisms as colimit injections.
    Moreover, for every compatible cocone formed by monomorphisms, the
    unique induced morphism from the colimit is a monomorphism too.

  \item \label{I:fingen} By~\cite[Proposition~1.69]{AdamekR94}, an object $A$ is
    finitely generated iff it is a strong quotient of a finitely
    presentable object, i.e.~there exists a finitely presentable
    object $A_0$ and a strong epimorphism $e\colon A_0 \epito A$.
  \item It is easy to verify that every split quotient of a finitely
    presentable object is finitely presentable again.
  \end{enumerate}
\end{remark}

\begin{lemma}\label{L:union} Let $\A$ be an lfp category. A cocone of monomorphisms
  $c_i\colon Di\monoto C\; (i\in I)$ of a filtered diagram $D$ of
  monomorphisms is a colimit of $D$ iff it is a {\em union}; that is,
  iff $\id_C$ is the supremum of the subobjects $c_i\colon Di\monoto C$.
\end{lemma}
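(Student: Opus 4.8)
My plan is to reduce the statement to a comparison with the genuine colimit of $D$. First I would form $\bar C = \colim D$ with colimit injections $\bar c_i\colon Di\to\bar C$. Since $D$ is a filtered diagram of monomorphisms, Remark~\ref{R:prelim}\ref{I:colimono} ensures that each $\bar c_i$ is monic and, moreover, that the unique morphism $u\colon\bar C\to C$ with $u\cdot\bar c_i=c_i$ (which exists because $(c_i)$ is a cocone on $D$) is itself monic, the cocone $(c_i)$ being formed by monomorphisms. Because a cocone on a diagram is a colimit iff the induced comparison map is invertible, it then suffices to show that $u$ is an isomorphism if and only if $\id_C$ is the supremum of the subobjects $c_i\colon Di\monoto C$.

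For the direction from colimit to union, assume $u$ is an isomorphism, so that $u$ and $\id_C$ represent the same subobject of $C$. Each $c_i=u\cdot\bar c_i$ factors through $u$, so $\id_C$ is an upper bound of the family $(c_i)_{i\in I}$. Given any upper bound $m\colon M\monoto C$, write $c_i=m\cdot d_i$; since $m$ is monic the morphisms $d_i$ form a compatible cocone on $D$, inducing $w\colon\bar C\to M$ with $w\cdot\bar c_i=d_i$. Then $m\cdot w$ and $u$ agree after precomposition with every $\bar c_i$, hence $m\cdot w=u$ by the universal property of $\bar C$, and therefore $m\cdot(w\cdot u^{-1})=\id_C$. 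This exhibits $\id_C$ as factoring through $m$, so $\id_C$ is the least upper bound, i.e.\ $(c_i)$ is a union.

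For the converse, assume $\id_C$ is the supremum of the $c_i$. Each $c_i=u\cdot\bar c_i$ factors through the monomorphism $u$, so $u$, regarded as a subobject of $C$, is an upper bound of the family $(c_i)$. Since $\id_C$ is the least upper bound, there is $v\colon C\to\bar C$ with $u\cdot v=\id_C$; thus $u$ is a split epimorphism, and being also monic it is an isomorphism. Consequently $(c_i)=(u\cdot\bar c_i)$ is the colimit cocone of $D$ transported along the isomorphism $u$, and hence a colimit.

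The argument is largely formal; the one ingredient genuinely using the ambient lfp category is Remark~\ref{R:prelim}\ref{I:colimono}, which makes the comparison morphism $u$ monic. The step requiring the most care is the translation between the poset-theoretic formulation (``$\id_C$ is a supremum'') and the categorical one: an upper bound $m$ of the $c_i$ must be converted into a compatible cocone on $D$ — which works precisely because $m$ is a monomorphism — and, in the other direction, the least-upper-bound property must be used to extract the splitting $v$ of $u$.
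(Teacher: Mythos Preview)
Your proof is correct and follows essentially the same approach as the paper: form the actual colimit, use Remark~\ref{R:prelim}\ref{I:colimono} to see that the comparison morphism into $C$ is monic, and then argue that it is invertible precisely when $\id_C$ is the supremum of the $c_i$. The paper declares the ``only if'' direction clear and, for the ``if'' direction, concludes more tersely (the comparison $m$ is an upper bound of the $c_i$, hence equals the supremum $\id_C$); your version spells out both directions in more detail but the underlying argument is the same.
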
 

\begin{proof} The `only if' direction is clear. For the `if' direction
  suppose that $c_i\colon  Di \monoto C$ have the union $C$, and let
  $\ell_i\colon  Di \to L$ be the colimit of $D$. Then, since $c_i$ is a
  cocone of $D$, we get a unique morphism $m\colon  L \to C$ with
  $m \cdot \ell_i = c_i$ for every $i$. By
  Remark~\ref{R:prelim}\ref{I:colimono}, all the $\ell_i$ and $m$ are
  monomorphisms, hence $m$ is a subobject of $C$. Moreover, we have
  that $c_i \leq m$, for every $i$. Consequently, since $C$ is the
  union of all $c_i$, $L$ must be isomorphic to $C$ via $m$, because
  $id_C$ is the largest subobject of $C$. Thus, the original cocone
  $c_i$ is a colimit cocone.
\end{proof}

\begin{remark} \label{R:refl}
  Colimits of filtered diagrams $D\colon \D\to \Set$ are precisely those cocones
  $c_i\colon D_i\to C$ ($i\in \obj \D$) of $D$ that have the following
  properties:
  \begin{enumerate}
  \item $(c_i)$ is jointly surjective, i.e.~$C=\bigcup c_i[D_i]$,  and
  \item given $i$ and elements $x,y\in D_i$ merged by $c_i$, then they are also
    merged by a connecting morphism $D_i\to D_j$ of $D$.
  \end{enumerate}
  This is easy to see: for every cocone $c_i'\colon D_i\to C'$ of $D$ define
  $f\colon C\to
  C'$ by choosing for every $x\in C$ some $y\in D_i$ with $x=c_i(y)$ and putting
  $f(x) = c_i'(y)$. By the two properties above, this is well defined and is unique
  with $f\cdot c_i = c_i'$ for all $i$.
\end{remark}

\begin{lemma}[Finitely presentable objects collectively reflect
  filtered colimits.]\label{L:refl}\\ 
  Let $\A$ be an lfp category and $D\colon \D\to \A$ a filtered
  diagram with objects $D_i$ ($i\in I$). A cocone $c_i\colon D_i\to C$
  of $D$ is a colimit iff for every $A\in \Afp$ the cocone
  \[
    c_i\cdot (-)\colon \A(A,D_i) \longrightarrow  \A(A,C)
  \]
  is a colimit of the diagram $\A(A,D-)$ in $\Set$.
\end{lemma}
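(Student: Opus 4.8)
The plan is to treat the two implications separately. The forward (``only if'') direction is immediate: if $(c_i)$ is a colimit cocone of the filtered diagram $D$, then for every $A\in\Afp$ the hom-functor $\A(A,-)$ preserves this colimit by the very definition of finite presentability, so $\bigl(\A(A,c_i)\bigr)$ is a colimit of $\A(A,D-)$ in $\Set$. No ingredient beyond the definitions is needed here.

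For the reverse (``if'') direction, I would first form an honest colimit $\ell_i\colon D_i\to L$ of $D$ in $\A$ (it exists since $\A$ is cocomplete), and let $f\colon L\to C$ be the unique mediating morphism with $f\cdot\ell_i=c_i$ for all $i$. It then suffices to prove that $f$ is an isomorphism, for then $(c_i)$ is a colimit cocone as well. The key intermediate observation is that $\A(A,f)$ is a bijection for every $A\in\Afp$: indeed $\bigl(\A(A,\ell_i)\bigr)$ is a colimit of $\A(A,D-)$ in $\Set$ because $A$ is finitely presentable, $\bigl(\A(A,c_i)\bigr)$ is a colimit of the same diagram by hypothesis, and $\A(A,f)$ is exactly the comparison map induced between these two colimits -- hence it is invertible.

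It remains to pass from ``$\A(A,f)$ is bijective for all $A\in\Afp$'' to ``$f$ is an isomorphism''. This uses that $\Afp$ is a strong (indeed dense) generator of $\A$, which is built into Remark~\ref{R:prelim}\ref{I:canColim}: every object is the colimit of its canonical filtered diagram of finitely presentable objects. Spelled out: $f$ is a monomorphism, since any two morphisms into $L$ that become equal after composing with $f$ already agree after precomposition with an arbitrary $A\to L$ ($A\in\Afp$), by injectivity of $\A(A,f)$, and such morphisms are jointly epic; and $f$ is a split epimorphism, since writing $C$ as the colimit of its canonical diagram $D_C\colon\Afp/C\to\A$, injectivity and surjectivity of $\A(P,f)$ provide for each colimit injection $p\colon P\to C$ a unique lift $\bar p\colon P\to L$ with $f\cdot\bar p=p$, and the $\bar p$ form a compatible cocone on $D_C$ whose induced morphism $g\colon C\to L$ satisfies $f\cdot g=\id_C$. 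A morphism that is both monic and split epic is an isomorphism (from $f\cdot g=\id_C$ and monicity of $f$ one gets $g\cdot f=\id_L$), so $f$ is invertible.

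I do not foresee a real obstacle; the only step that deserves care is the last one, where one should be explicit about the form of the generator property of $\Afp$ that is used -- alternatively, one may simply invoke density of $\Afp$ in $\A$ together with the standard fact that the induced fully faithful nerve functor reflects isomorphisms.
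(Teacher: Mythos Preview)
Your argument is correct. The forward direction is identical to the paper's. For the converse, however, you and the paper proceed differently.

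The paper verifies the universal property of the cocone $(c_i)$ directly: given a competing cocone $g_i\colon D_i\to G$, it uses the hypothesis to obtain, for each $A\in\Afp$, the induced map $\phi_A\colon\A(A,C)\to\A(A,G)$ in $\Set$, checks naturality of $\phi$ in $A$ by hand, and then assembles the required morphism $g\colon C\to G$ from the canonical filtered diagram $D_C$. Uniqueness is argued separately using that $\Afp$ is a generator.

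You instead form the actual colimit $L$ with comparison map $f\colon L\to C$, observe that $\A(A,f)$ is the canonical isomorphism between two colimits of the same $\Set$-diagram, and then deduce that $f$ is invertible from the fact that $\Afp$ is a dense generator. Your explicit ``monic plus split epic'' argument works fine; the cocone $(\bar p)$ is compatible precisely because the lifts are unique, as you implicitly use. Your closing remark that this step is just conservativity of the nerve functor $\A\to[\Afp^{\mathrm{op}},\Set]$ is exactly right and is arguably the cleanest way to phrase it.

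Both proofs ultimately rest on density of $\Afp$ (Remark~\ref{R:prelim}\ref{I:canColim}), but yours packages the passage from ``hom-wise isomorphism on $\Afp$'' to ``isomorphism in $\A$'' as a single reusable principle, whereas the paper unfolds the universal property by hand. The paper's route avoids forming the auxiliary colimit $L$, at the cost of more bookkeeping with the $\phi_A$'s.
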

Explicitly, the above property of the cocone $(c_i)$ states that for every
morphism $f\colon A\to C$ where $A\in \A_\fp$
\begin{enumerate}
\item a factorization through some $c_i$ exists, and
\item given two factorizations $f=c_i\cdot q_k$ for $k=1,2$, then
  $q_1,q_2\colon A\to D_i$ are merged by a connecting morphism of $\D$. The
  proof that this describes $\colim \A(A,D-)$ follows from Remark \ref{R:refl}.
\end{enumerate}
\begin{proof}
  If $(c_i)$ is a colimit, then since $\A(A,-)$ preserves filtered colimits,
  the cocone of all $\A(A,c_i)=c_i\cdot (-)$ is a colimit in $\Set$.

  Conversely, assume that, for every $A\in \A_\fp$, the colimit cocone
  of the functor $\A(A,D-)$ is $\big(\A(A,c_i)\big)_{i\in \D}$. For
  every cocone $g_i\colon D_i\to G$ it is our task to prove that there
  exists a unique $g\colon C\to G$ with $g_i = g\cdot c_i$ for all
  $i$. We first prove uniqueness of $g$. If $g\cdot c_i=g'\cdot c_i$
  for all $i$, then $\A(A,g)\cdot \A(A,c_i)=\A(A,g')\cdot
  \A(A,c_i)$. Since the $\A(A,c_i)$ are jointly surjective, we obtain
  $\A(A,g)=\A(A,g')$. Since this holds for all $A\in \A_{\fp}$, and
  $\A_\fp$ is a generator, we have $g=g'$.

  Now $\big(\A(A,g_i)\big)_{i\in \D}$ forms a cocone of the functor
  $\A(A,-)\cdot D$. Consequently, there is a unique map
  $\phi_A\colon \A(A,C)\to \A(A,G)$ with $\phi_A\cdot \A(A,c_i)=\A(A,g_i)$
  for all $i\in \D$.
  
  For every morphism $h\colon A_1\to A_2$ between objects of $\A_\fp$
    the square on the right of the following diagram is commutative:
    \[
      \xymatrix{
        \A(A_1,D_i)\ar[rrr]^{\A(A_1,c_i)}
        &&&
        \A(A_1,C)\ar[rr]^{\phi_{A_1}}
        &&
        \A(A_1,G)
        \ar@{<-} `u[l] `[lllll]_-{\A(A_1,g_i)} [lllll]
        \\
        \A(A_2,D_i)\ar[u]^{\A(h,D_{i})}\ar[rrr]^{\A(A_2,c_i)}
        &&&
        \A(A_2,C)\ar[rr]^{\phi_{A_2}}\ar[u]_{\A(h,C)}
        &&
        \A(A_2,G)\ar[u]_{\A(h,G)}
        \ar@{<-} `d[l] `[lllll]^-{\A(A_2,g_i)} [lllll]}
    \]
    This follows from the commutativity of the left-hand square and
    the outside one combined with the fact that
    $\big(\A(A_2,c_i)\big)_{i\in \D}$, being a colimit cocone, is
    jointly epic.
  
  As a consequence, the morphisms
  \[
    A\xrightarrow{\phi_A(a)}C\qquad \text{with $a\colon A\to C$ in $\A_\fp/C $,}
  \]
  form a cocone for the canonical filtered diagram $D_C\colon \A_\fp/C \to \A$, of
  which $C$ is the colimit. Indeed, given a morphism $h$ in $\A_\fp/C$
  \[
    \xymatrix{
      A_1
      \ar[rd]_{a_1}
      \ar@{->}[rr]^-{h}
      &&
      A_2
      \ar[ld]^{a_2}
      \\
      &
      C
      &
    }
  \]
  we have
  $$
  \phi_{A_1}(a_1)=\phi_{A_1}(a_2\cdot h)=\phi_{A_1}\cdot\A(h,C)(a_2)=\A(h,G)\cdot
  \phi_{A_2}(a_2)=\phi_{A_2}(a_2)\cdot h.
  $$
  Thus there is a unique morphism $g\colon C\to G$ making for each
  $a\colon A\to C$ in $\A_\fp/C$ the following triangle commute:
  \[
    \xymatrix{
      & A
      \ar[dl]_{a}
      \ar[dr]^{\phi_A(a)}
      \\
      C
      \ar@{->}[rr]^{g}
      &&
      G
    }
  \]
  It satisfies $g\cdot c_i=g_i$ for all $i\in \D$. Indeed, fix $i$;
  for every $A\in \A_{\fp}$ and $b\colon A\to D_i$, we have
  $g_ib=\A(A,g_i)(b)=\phi_A\cdot
  \A(A,c_i)(b)=\phi_A(c_ib)=gc_ib$. And the morphisms
  $b\in \A_\fp/D_i$ are jointly epimorphic, thus $g_i=g\cdot c_i$.
  Thus $g$ is the desired factorization morphism.
\end{proof}

\begin{lemma}[Finitely generated objects collectively reflect filtered
  colimits of monomorphisms.]\label{L:refl2}
  Let $\A$ be an lfp category and $D\colon \D\to \A$ a filtered
  diagram of monomorphisms with ojects $D_i\, (i\in I)$. A cocone
  $c_i\colon D_i\to C$ of $D$ is a colimit iff for every $A\in \A_\fg$
  the cocone
  \[
    c_i\cdot (-)\colon \A(A,D_i) \longrightarrow  \A(A,C) \qquad (i\in I)
  \]
  is a colimit of the diagram $\A(A,D-)$ in $\Set$.
\end{lemma}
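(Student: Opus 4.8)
The plan is to prove the two implications separately. The key structural fact is that every finitely presentable object is finitely generated — its hom-functor preserves \emph{all} filtered colimits, in particular filtered colimits of monomorphisms — so that, up to isomorphism, $\Afp$ is contained in $\Afg$. Hence the hypothesis ``for every $A\in\Afg$'' is logically stronger than the hypothesis ``for every $A\in\Afp$'' appearing in Lemma~\ref{L:refl}, and this is what makes the argument short.

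For the ``only if'' direction, suppose $(c_i)$ is a colimit of $D$. Since $D$ is a filtered diagram \emph{of monomorphisms} and $A\in\Afg$, the functor $\A(A,-)$ preserves this colimit by the very definition of finitely generated object recalled in Section~\ref{sec:prel}. Thus $\big(\A(A,c_i)\big)_{i\in I}$ is a colimit of $\A(A,D-)$ in $\Set$, as required. This is the only step in which the monomorphism hypothesis on $D$ is used.

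For the ``if'' direction, assume the displayed colimit condition holds for every $A\in\Afg$. By the observation above it holds in particular for every $A\in\Afp$, since each finitely presentable object is finitely generated, hence isomorphic to an object of $\Afg$, and the condition on the cocone $(c_i)$ is invariant under replacing $A$ by an isomorphic object. Lemma~\ref{L:refl} now yields directly that $(c_i)$ is a colimit of $D$.

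I expect no genuine obstacle: the whole statement reduces to the already-proved Lemma~\ref{L:refl} together with the inclusion $\Afp\hookrightarrow\Afg$, the only subtlety being to notice that the monomorphism assumption on $D$ is needed solely for the ``only if'' direction. An alternative, slightly longer route for ``if'' would be to form the genuine colimit $\ell_i\colon D_i\monoto L$ of $D$, whose injections are monic by Remark~\ref{R:prelim}\ref{I:colimono}, factor each $c_i$ through the induced comparison morphism $m\colon L\to C$, use the ``only if'' direction together with the hypothesis to see that $\A(A,m)$ is a bijection for every $A\in\Afg$ and hence for every $A\in\Afp$, and conclude that $m$ is an isomorphism because $\Afp$ is a dense generator of $\A$; but the direct appeal to Lemma~\ref{L:refl} is cleaner and is the one I would carry out.
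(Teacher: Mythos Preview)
Your proof is correct and follows essentially the same approach as the paper: the ``only if'' direction uses directly that $\A(A,-)$ preserves filtered colimits of monomorphisms for $A\in\Afg$, and the ``if'' direction reduces to Lemma~\ref{L:refl} via the inclusion $\Afp\subseteq\Afg$. The paper's argument is identical, just more terse.
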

\begin{proof}
  If $(c_i)$ is a colimit, then since $\A(A,-)$ preserves filtered colimits of monomorphisms, the
  cocone $c_i\cdot(-)\colon \A(A,D_i)\to \A(A,C)$ is a colimit in $\Set$.

  Conversely, if for every $A\in \A_\fg$, the cocone $c_i\cdot (-)\colon \A(A,D_i)\to
  \A(A,C)$, $i\in I$, is a colimit of the
  diagram $\A(A,D-)$, then we have for every $A\in \A_\fp$ that the cocone $c_i\cdot
  (-), i\in I$, is a colimit of the diagram $\A(A,D-)$.
  Hence by Lemma~\ref{L:refl}, the cocone $(c_i)$ is a colimit.
\end{proof}
\begin{corollary} \label{C:refl}
  A functor $F\colon \A\to \B$ between lfp categories is finitary iff it
  preserves the canonical colimits: $FA = \colim FD_A$ for every object $A$ of $\A$.
\end{corollary}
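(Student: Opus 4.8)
The forward implication is immediate: a finitary functor preserves all filtered colimits, and by Remark~\ref{R:prelim}\ref{I:canColim} the canonical diagram $D_A$ has colimit $A$ with colimit injections the morphisms $p\colon P\to A$ of $\Afp/A$; hence $FA=\colim FD_A$. For the converse --- where ``$FA=\colim FD_A$'' is understood to mean that the cocone $(Fp\colon FP\to FA)_{(P,p)\in\Afp/A}$ is a colimit of the diagram $F\cdot D_A$ --- the plan is to identify $F$ with the finitary extension of its own restriction to $\Afp$.

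Write $J\colon\Afp\hookrightarrow\A$ for the inclusion, put $H:=F\cdot J$, and let $\bar H\colon\A\to\B$ be the essentially unique finitary extension of $H$ supplied by Remark~\ref{R:prelim}; on objects it is given by $\bar H A=\colim H\cdot D_A=\colim F\cdot D_A$, with colimit injections the morphisms $\bar H p\colon FP\to\bar H A$ for $p\in\Afp/A$. By hypothesis $(Fp)_{p\in\Afp/A}$ is likewise a colimit cocone of $F\cdot D_A$, so there is a unique isomorphism $\theta_A\colon\bar H A\xrightarrow{\sim} FA$ with $\theta_A\cdot\bar H p=Fp$ for all $p$. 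It remains to check that $\theta=(\theta_A)_{A\in\A}$ is natural: for a morphism $f\colon A\to A'$ and every $p\colon P\to A$ in $\Afp/A$, functoriality of $F$ and of $\bar H$ together with $\bar H\cdot J=H=F\cdot J$ give
\[
  \theta_{A'}\cdot\bar H f\cdot\bar H p
  \;=\; \theta_{A'}\cdot\bar H(f\cdot p)
  \;=\; F(f\cdot p)
  \;=\; Ff\cdot Fp
  \;=\; Ff\cdot\theta_A\cdot\bar H p .
\]
Since the colimit injections $\bar H p$ are jointly epic, $\theta_{A'}\cdot\bar H f=Ff\cdot\theta_A$, so $\theta$ is natural. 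Hence $F\cong\bar H$, and as $\bar H$ is finitary, so is $F$.

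The only delicate point is the bookkeeping around colimit injections: one must be sure that ``$F$ preserves the canonical colimit of $A$'' is witnessed precisely by the family $(Fp)_p$ and that the extension $\bar H$ carries the injections $\bar H p$, so that the object-wise isomorphisms $\theta_A$ can be chosen compatibly and assemble into a natural transformation; once this is set up, everything else is formal. An alternative route avoids $\bar H$ entirely: given any filtered colimit $c_i\colon D_i\to C$ in $\A$, use Lemma~\ref{L:refl} applied in $\B$ to reduce the claim to showing, for each finitely presentable object $B$ of $\B$, that $\B(B,FD_i)\to\B(B,FC)$ is a colimit in $\Set$; then expand $FC=\colim FD_C$ and each $FD_i=\colim FD_{D_i}$ via the canonical diagrams and use that every $p\colon P\to C$ with $P\in\Afp$ factors through some $c_i$. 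This is more hands-on, but needs a cofinality argument on comma categories, so the extension argument above is shorter.
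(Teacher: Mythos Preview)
Your proof is correct, and it takes a genuinely different route from the paper's. The paper argues directly: given an arbitrary filtered colimit $c_i\colon D_i\to C$ in $\A$, it invokes Lemma~\ref{L:refl} in $\B$ and, for each $B\in\B_\fp$ and each $b\colon B\to FC$, uses the hypothesis $FC=\colim FD_C$ to factor $b$ through some $Fa$ with $a\in\Afp/C$, then factors $a$ through some $c_i$ since $A$ is finitely presentable. This is precisely the ``alternative route'' you sketch at the end; the paper dispatches essential uniqueness with ``clear'', which is where the cofinality-style bookkeeping you allude to would live.

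Your main argument instead exploits Remark~\ref{R:prelim}(3): identify $F$ with the finitary extension $\bar H$ of $F|_{\Afp}$ by comparing two colimit cocones on the same diagram $F\cdot D_A$. This is more conceptual and sidesteps the essential-uniqueness check entirely; the price is the bookkeeping you flag about identifying $\bar H p$ with the colimit injection (equivalently, making the natural isomorphism $\bar H\cdot J\cong H$ explicit), which you handle correctly. Either approach is fine; yours is arguably cleaner once the identification $\bar H P\cong FP$ for $P\in\Afp$ is fixed via the terminal object $\id_P$ of $\Afp/P$.
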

\begin{proof}
  Indeed, in the notation of Lemma~\ref{L:refl} we are to verify that
  $Fc_i\colon FD_i\to FC$ ($i\in I$) is a colimit of $FD$. For this,
  taking into account that lemma and Remark \ref{R:refl}, we take any
  $B\in \B_\fp$ and prove that every morphism $b\colon B\to FC$
  factorizes essentially uniquely through $Fc_i$ for some $i\in
  \D$. Since $FC=\colim FD_C$ we have a factorization
  \[
    \vcenter{\xymatrix{
        & FA
        \ar[d]^{Fa}
        \\
        B
        \ar@{-->}[ur]^{b_0}
        \ar[r]_{b}
        & FC
      }}
    \qquad\quad (A\in \A_\fp)
  \]
  By Lemma~\ref{L:refl} there is some $i\in \D$ and $a_0\in \A(A,D_i)$ with
  $a=c_i\cdot a_0$ and hence $b=Fc_i\cdot (Fa_0\cdot b_0)$.
  The essential uniqueness is clear.
\end{proof}
\begin{notation} \label{N:image} Throughout the paper, given a morphism $f\colon
  X
  \to Y$ we denote by $\Im f$ the {\em image of $f$}, that is, any choice of the
  intermediate object defined by taking the (strong epi, mono)-factorization of
  $f$:
    \[
      f = (\xymatrix@1{X \ar@{->>}[r]^-e & \Im f \ar@{ >->}[r]^-m & Y}).
    \]
\end{notation}

We will make use of the next lemma in the proof of Proposition~\ref{P:finmono}. 

\begin{lemma}\label{L:im}
  In an lfp category, images of filtered colimits are directed unions
  of images.
\end{lemma}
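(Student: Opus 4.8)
Unwinding the statement: given a filtered diagram $D\colon\D\to\A$ with colimit cocone $c_i\colon D_i\to C$ and a morphism $f\colon C\to Y$, we must show that $\Im f$, regarded as a subobject of $Y$, is the directed union of the subobjects $\Im(f\cdot c_i)\monoto Y$. The plan is to assemble the $\Im(f\cdot c_i)$ into a filtered diagram of monomorphisms carrying a compatible cocone into $\Im f$, and then to identify $\Im f$ as its colimit by means of Lemma~\ref{L:union}.

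First I would fix (strong epi, mono)-factorizations (Remark~\ref{R:prelim}\ref{I:factorization}) $f = m\cdot e$ with $e\colon C\epito\Im f$, $m\colon\Im f\monoto Y$, and $f\cdot c_i = m_i\cdot e_i$ for each $i$. From $m\cdot(e\cdot c_i)=m_i\cdot e_i$ and the diagonal fill-in one gets a unique $d_i\colon\Im(f\cdot c_i)\to\Im f$ with $d_i\cdot e_i=e\cdot c_i$ and $m\cdot d_i=m_i$; since $m\cdot d_i=m_i$ is monic and $m$ is monic, $d_i$ is monic, so it presents $\Im(f\cdot c_i)$ as a subobject of $\Im f$. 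For a connecting morphism $h$ of $\D$ we have $f\cdot c_i=f\cdot c_j\cdot Dh$; factoring $e_j\cdot Dh$ and appealing to uniqueness of (strong epi, mono)-factorizations produces a monomorphism $\iota_{ij}\colon\Im(f\cdot c_i)\monoto\Im(f\cdot c_j)$ with $m_j\cdot\iota_{ij}=m_i$, and then $d_j\cdot\iota_{ij}\cdot e_i=d_j\cdot e_j\cdot Dh=e\cdot c_j\cdot Dh=e\cdot c_i=d_i\cdot e_i$ forces $d_j\cdot\iota_{ij}=d_i$ since $e_i$ is epic. A routine check, again via uniqueness of factorizations, shows the $\iota_{ij}$ are functorial, so we obtain a filtered diagram of monomorphisms with vertices $\Im(f\cdot c_i)$, whose connecting maps are exactly the comparisons of these subobjects of $\Im f$, together with the compatible cocone $(d_i)$.

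By Lemma~\ref{L:union} it then suffices to show that $\id_{\Im f}$ is the supremum of the subobjects $d_i$. So let $s\colon S\monoto\Im f$ be a subobject with $d_i=s\cdot t_i$ for suitable $t_i$. Since $s$ is monic and the morphisms $e\cdot c_i=d_i\cdot e_i=s\cdot(t_i\cdot e_i)$ are compatible over $D$, the $t_i\cdot e_i\colon D_i\to S$ form a cocone of $D$, hence factor uniquely as $t_i\cdot e_i=t\cdot c_i$ through the colimit $C$. Then $s\cdot t\cdot c_i=e\cdot c_i$ for all $i$, and colimit cocones are jointly epic, so $s\cdot t=e$; thus the strong epimorphism $e$ factors through the monomorphism $s$, which forces $s$ to be an isomorphism. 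Hence $\id_{\Im f}=\sup_i d_i$, and Lemma~\ref{L:union} yields that $(d_i)$ is a colimit cocone, i.e.\ $\Im f$ is the directed union of the images $\Im(f\cdot c_i)$.

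I expect the only real work to be the bookkeeping in the second paragraph -- checking that the comparison monomorphisms $\iota_{ij}$ are well defined and functorial, so that we genuinely have a filtered diagram of monomorphisms rather than merely a directed family of subobjects; this rests entirely on the uniqueness of (strong epi, mono)-factorizations. The final identification via Lemma~\ref{L:union} is then short, its one substantive ingredient being the elementary fact that a strong epimorphism factoring through a monomorphism makes that monomorphism invertible.
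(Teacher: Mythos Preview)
Your proof is correct and follows essentially the same route as the paper's: both obtain the comparison monomorphisms $d_i$ via diagonal fill-in, assemble the images into a filtered diagram of monomorphisms, and verify the union condition by factoring the strong epimorphism $e$ through an arbitrary upper bound to force the latter invertible. The only cosmetic difference is that the paper sidesteps the functoriality bookkeeping for the $\iota_{ij}$ by observing that, since the $d_i$ are monic, there is at most one morphism $\Im(f\cdot c_i)\to\Im(f\cdot c_j)$ compatible with them---so functoriality is automatic.
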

More precisely, suppose we have a filtered diagram $D\colon  \D \to \A$ with objects $D_i\, (i\in I)$ and 
a colimit cocone $(c_i\colon  D_i \to C)_{i\in I}$. Given  a morphism
$f\colon  C \to B$, take the  factorizations of $f$ and all $f\cdot c_i$ as follows:
\begin{equation}\label{E:unionofimages}
  \vcenter{
    \xymatrix{
      D_i
      \ar[d]_{c_i} \ar@{->>}[rr]^-{e_i}
      &&
      \Im(f \cdot c_i) \ar@{ >->}[d]^{m_i} \ar@{-->}[ld]_-{d_i}
      \\
      C
      \ar@{->>}[r]^-e
      &
      \Im f \ar@{ >->}[r]^-{m}
      &
      B \ar@{<-} `d[l] `[ll]^-{f}
    }}\qquad \quad (i\in I)
\end{equation}
Then  the subobject
$m$ is the union of the subobjects $m_i$.
\begin{proof}
  We have the
  commutative diagram~\eqref{E:unionofimages}, where $d_i$ is the
  diagonal fill-in. Since $m\cdot d_i = m_i$, we see that $d_i$
  is  monic. Furthermore, for every connecting morphism $Dg\colon  D_i\to D_j$ we
  get a monomorphism  $\bar g\colon  \Im (f\cdot c_i)\monoto \Im (f\cdot c_j)$ as a diagonal fill-in
  in the diagram below:
  \[
    \xymatrix{
      D_i\ar@{->>}[r]^-{e_i}
      \ar[d]_{Dg}
      & \Im (f\cdot c_i)
      \ar@{>->}[dr]^-{d_i}
      \ar@{>-->}[d]_{\bar g}
      \\
      D_j
      \ar@{->>}[r]^-{e_j}
      &
      \Im (f\cdot c_j)
      \ar@{>->}[r]_-{d_j}
      &
      \Im f
    }
  \]
  Since $D$ is a filtered diagram, we see that the objects $\Im (f\cdot c_i)$ form a
  filtered diagram of monomorphisms; in fact, since $d_i$ and $d_j$ are monic
  there is at most one connecting morphism $\Im (f\cdot c_i) \to \Im (f\cdot c_j)$.

  In order to see that $m$ is the union of the subobjects $m_i$, let
  $d^{\prime}_i\colon \Im (f\cdot c_i)\monoto N$ and $n\colon N \monoto \Im f$ be
  monomorphisms such that $n\cdot d^{\prime}_i=d_i$ for every $i\in I$.
   \[
    \xymatrix{
      Di\ar@{->>}[r]^-{e_i}
      \ar[d]_{c_i}
      & \Im (f\cdot c_i)
      \ar@{>->}[d]^-{d^{\prime}_i}
      \ar@{>->}[r]^-{d_i}
      &
      \Im f
      \\
      C
      \ar@{-->}[r]^-{t}
      &
      N
      \ar@{>->}[ru]_-{n}
      &
    }
  \]
  Since $n$ is monic, the morphisms $d^{\prime}_i\cdot e_i$ clearly form a
  cocone of $D$, and this induces a unique morphism $t\colon C\to N$ such that $t\cdot
  c_i=d^{\prime}_i\cdot e_i$. Then $n\cdot t\cdot
  c_i=e\cdot c_i$; hence, $n\cdot t=e$. Since $n$ is monic, 
  it follows that it is an isomorphism, i.e.~the subobjects $\id_{\Im f}$ and
  $n$ are isomorphic. This shows that $m$ is the desired union.
\end{proof}

\section{Finitary and Finitely Bounded Functors}\label{sec:fin}

In this section we introduce the notion of a finitely bounded functor on a
locally presentable category, and investigate when these functors are precisely
the finitary ones.

\begin{definition}\label{D:bounded}
  A functor $F\colon  \A \to \B$ is called \emph{finitely bounded} provided
  that, given an object $A$ of $\A$, every finitely generated
  subobject of $FA$ in $\B$ factorizes through the $F$-image of a finitely generated
  subobject of $A$ in $\A$.
\end{definition}
  
\noindent
In more detail, given a monomorphism $m_0\colon  M_0 \monoto FA$ with $M_0
\in \B_{\mathsf{fg}}$ there exists a monomorphism $m\colon  M \monoto A$ with
$M \in \Afg$ and a factorization as follows:
  \[
  \xymatrix{
    & FM\ar[d]^{Fm}\\
    M_0\ar@{-->}[ru]\ar@{ >->}[r]_{m_0} & FA
  }
\]

\begin{example}\label{E:bounded}
  \begin{enumerate}
  \item\label{E:bounded:1} If $\B$ is the category of $S$-sorted sets,
    then $F$ is finitely bounded iff for every object $A$ of $\A$ and
    every element $x \in FA$ there exists a finitely generated
    subobject $m\colon  X \monoto A$  such that $x \in Fm[FX]$.
  \item\label{E:bounded:2} Let $\A$ be a category with (strong
    epi, mono)-factorizations. An object of $\A$ is finitely generated iff its
    hom-functor is finitely bounded. Indeed, by applying~\ref{E:bounded:1} we
    see that $\A(A,-)$ is finitely bounded iff for every morphism $f\colon A
    \to B$ there exists a factorization $f = m \cdot g$, where $m\colon A'
    \monoto B$ is monic and $A'$ is finitely generated. This implies that $A$ is
    finitely generated: for $f=\id_A$ we see that $m$ is invertible. Conversely,
    if $A$ is finitely generated, then we can take the (strong epi,
    mono)-factorization of $f$ and use that finitely generated objects are
    closed under strong quotients~\cite{AdamekR94}.
  \end{enumerate}
\end{example}

\begin{proposition}\label{P:finmono}
  Let $F$ be a functor between lfp categories preserving
  monomorphisms. Then $F$ is finitely bounded iff it preserves
  filtered colimits of monomorphisms.
\end{proposition}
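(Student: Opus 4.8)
The plan is to prove both implications by working with the canonical filtered diagrams and the collective-reflection lemmas established above. For the ``if'' direction, suppose $F$ preserves filtered colimits of monomorphisms, and let $m_0\colon M_0\monoto FA$ be a finitely generated subobject with $M_0\in\B_\fg$. Express $A$ as the colimit of a filtered diagram of finitely presentable objects; by taking images, I would instead use that $A$ is the \emph{directed union} of its finitely generated subobjects $m_j\colon A_j\monoto A$ (this is standard for lfp categories, and each $A_j$ may be taken finitely generated, indeed the image of a map from a finitely presentable object). This union is a filtered colimit of monomorphisms, so $F$ applied to it is still a colimit, with injections $Fm_j\colon FA_j\to FA$; since $F$ preserves monos, these are monomorphisms. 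Now $M_0$ is finitely generated, so by Lemma~\ref{L:refl2} the identity $\id_{M_0}$ (equivalently, the morphism $m_0\colon M_0\to FA$ viewed via the hom-functor $\B(M_0,-)$) factorizes through some $Fm_j$, i.e.\ $m_0 = Fm_j\cdot w$ for some $w\colon M_0\to FA_j$. That is exactly the factorization required by Definition~\ref{D:bounded}, with $M = A_j$ finitely generated.

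For the ``only if'' direction, assume $F$ is finitely bounded and let $D\colon\D\to\A$ be a filtered diagram of monomorphisms with colimit cocone $c_i\colon D_i\to C$. By Lemma~\ref{L:refl2} it suffices to show that for every $B\in\B_\fg$ the cocone $Fc_i\cdot(-)\colon \B(B,FD_i)\to\B(B,FC)$ is a colimit in $\Set$, i.e.\ (by Remark~\ref{R:refl}) that it is jointly surjective and satisfies the merging condition. For joint surjectivity, take $b\colon B\to FC$. Since $B$ is finitely generated, its image $\Im b\monoto FC$ is a finitely generated subobject of $FC$, so finite boundedness gives a finitely generated $m\colon M\monoto C$ and a factorization of $\Im b\monoto FC$ through $Fm\colon FM\to FC$; composing, $b$ itself factorizes as $b = Fm\cdot b'$ for some $b'\colon B\to FM$. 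Now $M$ is finitely generated in $\A$, so by Remark~\ref{R:prelim}\ref{I:fingen} it is a strong quotient $e\colon M_0\epito M$ of some $M_0\in\A_\fp$; since $M_0$ is finitely presentable and $C=\colim D_i$, the composite $m\cdot e\colon M_0\to C$ factorizes through some $c_i$ via a map $q\colon M_0\to D_i$ — and because $D$ is filtered and $C$ is the colimit of a diagram of monomorphisms, one can arrange (after passing to a later index) that $c_j\cdot (\text{connecting}) \cdot q = m\cdot e$ with the connecting map suitably compatible; the upshot is that $m\colon M\monoto C$ factorizes through $c_j$ for some $j$, i.e.\ $m = c_j\cdot n$ for a (necessarily monic) $n\colon M\to D_j$. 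Then $b = Fm\cdot b' = Fc_j\cdot (Fn\cdot b')$, giving the desired factorization through $Fc_j$. The merging condition follows from a similar argument applied to a finitely presentable cover of $B$ together with the fact that $\A_\fp$ collectively reflects filtered colimits (Lemma~\ref{L:refl}): two factorizations of $b$ through some $Fc_i$ are pulled back along an epi from a finitely presentable object and then merged by a connecting morphism of $\D$.

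The main obstacle is the step in the ``only if'' direction where a finitely generated subobject $m\colon M\monoto C$ of the colimit must be shown to factorize through one of the colimit injections $c_i$. The naive argument only factorizes a finitely presentable \emph{cover} $M_0\epito M$ through some $c_i$; to push this down to $M$ one needs to exploit that the $c_i$ are monomorphisms and the diagram is filtered, so that the two composites $M_0\rightrightarrows D_j$ induced by the ambiguity $M_0\epito M$ are equalized after moving to a later stage, yielding a well-defined mono $M\monoto D_j$ over $c_j$. This is precisely the kind of manipulation underlying Lemma~\ref{L:im} (images of filtered colimits are directed unions of images), and indeed an alternative, cleaner route is to invoke Lemma~\ref{L:im} directly: applying it to $\id_C\colon C\to C$ shows $C$ is the directed union of the images $\Im c_i$, which for a diagram of monos are just the $D_i$ themselves, and a finitely generated subobject of a directed union factorizes through one member of the union. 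I would use that formulation to keep the proof short.
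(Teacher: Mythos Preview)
Your ``if'' direction is essentially the paper's argument. The ``only if'' direction has the right skeleton but contains two unnecessary detours, one of which is an actual gap.

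First, the step you call the ``main obstacle'' is not an obstacle at all. Once you have a finitely generated subobject $m\colon M\monoto C$, the factorization $m = c_i\cdot\bar m$ through some colimit injection is immediate from the \emph{definition} of finitely generated: $\A(M,-)$ preserves filtered colimits of monomorphisms, and $C = \colim D_i$ is such a colimit. There is no need to pass to a finitely presentable cover $M_0\epito M$, and the discussion of equalizing two composites $M_0\rightrightarrows D_j$ at a later stage is beside the point. The paper simply writes: ``since $\A(M,-)$ preserves the colimit of $D$, $m$ factorizes as $m = c_i\cdot\bar m$.''

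Second, and more seriously, your treatment of the merging condition does not work. You invoke Lemma~\ref{L:refl}, but that lemma concerns cocones in $\A$ tested against $\A_\fp$; here the cocone $(Fc_i)$ lives in $\B$, and we do not yet know it is a colimit of anything, so there is nothing to reflect. Pulling back along a finitely presentable cover of $B$ in $\B$ does not help either, since the diagram $FD$ has no a priori colimit property. The correct argument is the one-line observation you never make: each $c_i$ is monic (Remark~\ref{R:prelim}\ref{I:colimono}), $F$ preserves monomorphisms by hypothesis, hence each $Fc_i$ is monic, so any factorization of $b$ through $Fc_i$ is automatically unique. This disposes of the merging condition entirely, and is exactly what the paper does.
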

\begin{proof}
  We are given lfp categories $\A$ and $\B$ and a functor $F\colon \A\to \B$
  preserving monomorphisms.

  \begin{enumerate}
  \item Let $F$ preserve filtered colimits of monomorphisms. Then, for
    every object $A$ we express it as a canonical filtered colimit of
    all $p\colon  P \to A$ in $\Afp/A$ (see Remark~\ref{R:prelim}\ref{I:canColim}). By
    Lemma~\ref{L:im} applied to $f = \id_A$ we see that $A$ is the
    colimit of its subobjects $\Im p$ where $p$ ranges over $\Afp/A$. 
    Hence, $F$ preserves this colimit,
    \[
      FA = \colim_{p \in \Afp/A} F(\Im p),
    \]
    and it is a colimit of monomorphisms since $F$ preserves monomorphisms. Given a finitely generated subobject $m_0\colon M_0\monoto FA$, we thus obtain
    some $p$ in $\Afp/A$ such that $m_0$ factorizes through the
    $F$-image of $\Im(p) \monoto A$. Hence $F$ is finitely bounded.
    
  \item Let $F$ be finitely bounded. Let $D\colon \D \to \A$ be a filtered
    diagram of monomorphisms with a colimit
    cocone:
    \[
      c_i\colon D_i\monoto C\qquad (i\in I).
    \]
    In order to prove that $Fc_i\colon FD_i\to FC$, $i\in I$, is a colimit cocone, we
    show that its image under $\B(B,-)$ is a colimit cocone for every finitely
    generated object $B$ in $\B$ (cf.~Lemma~\ref{L:refl2}). In other words,
    given $f\colon B\to FC$ with $B\in \B_\mathsf{fg}$ then
    \begin{enumerate}
    \item $f$ factorizes through $Fc_i$ for some $i$ in $I$, and
    \item the factorization is unique.
    \end{enumerate}
    
    We do not need to take care of (b): since every $c_i$ is monic by
    Remark~\ref{R:prelim}(4), so is every $Fc_i$. In order to prove (a),
    factorize $f\colon B\to FC$ as a strong epimorphism $q\colon B \epito M_0$
    followed by a monomorphism $m_0\colon M_0\monoto FC$. Then $M_0$ is
    finitely generated by Remark~\ref{R:prelim}(5). Thus, there exists a
    finitely generated subobject $m\colon M\monoto C$ with $m_0 = Fm \cdot u$ for
    some $u\colon  M_0 \to FM$. Furthermore, since $\A(M,-)$ preserves the
    colimit of $D$, $m$ factorizes as $m = c_i\cdot \overline{m}$ for some
    $i\in I$. Thus $F\overline{m}\cdot u\cdot q$ is the desired
    factorization:
    \[
      f
      = m_0 \cdot q
      = Fm \cdot u \cdot q
      = Fc_i \cdot F\overline{m} \cdot u \cdot q.
      \tag*{\endproofbox}
    \]
  \end{enumerate}
  \makeatletter
  \def\endproof{}
  \makeatother
\end{proof}
   
In the following theorem we work with an lfp category whose finitely generated
objects are finitely presentable. This holds e.g.~for the categories of sets,
many-sorted sets, posets, graphs, vector spaces, unary algebras on one operation
and nominal sets. Further examples are the categories of commutative monoids
(this is known as Redei's theorem~\cite{Redei65}, see Freyd~\cite{Freyd68} for a
rather short proof), positive convex algebras (i.e.~the Eilenberg-Moore algebras
for the (sub-)distribution monad on sets~\cite{SokolovaW15}), semimodules for
Noetherian semirings (see e.g.~\cite{bms13} for a proof). The category of
finitary endofunctors of sets also has this property as we verify in
Corollary~\ref{cor:setFunNotSStrict}.
  
On the other hand, the categories of groups, lattices or monoids do
not have that property.\smnote{Why lattices? Can we give a citation? TW: so we
  now know that the free lattice on 3 generators is infinite, but we could not
  find in the literature any statement about fp vs fg.}
A particularly simple counter-example is the slice category
$\Nat/\Set$; equivalently, this is the category of algebras with a set 
of constants indexed by $\Nat$. Hence, an object $a\colon \Nat\to A$ is
finitely generated iff $A$ has a finite set of generators,
i.e.~$A\setminus a[\Nat]$ is a finite set. It is finitely presentable
iff, moreover, $A$ is presented by finitely many relations, i.e.~the
kernel of~$a$ is a finite subset of $\Nat\times \Nat$.

\begin{theorem}\label{T:finbound}
  Let $\A$ be an lfp category in which every finitely generated object is
  finitely presentable ($\Afp=\Afg$). Then for all functors preserving
  monomorphisms from $\A$ to lfp categories we have the equivalence
  \[
    \text{finitary} \iff \text{finitely bounded}.
  \]
\end{theorem}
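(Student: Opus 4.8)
The plan is to combine Proposition~\ref{P:finmono} with Corollary~\ref{C:refl}, using the hypothesis $\Afp=\Afg$ to bridge the gap between ``preserves filtered colimits of monomorphisms'' and ``finitary''. Since every functor in question preserves monomorphisms, Proposition~\ref{P:finmono} identifies finite boundedness with the preservation of filtered colimits of monomorphisms. One implication of the theorem is then trivial: a finitary functor preserves filtered colimits of monomorphisms, hence is finitely bounded. For the converse we are handed a functor $F$ that preserves monomorphisms \emph{and} filtered colimits of monomorphisms, and we must show that $F$ is finitary.

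By Corollary~\ref{C:refl} it suffices to prove that $F$ preserves the canonical colimit $A=\colim D_A$ for each object $A$ of $\A$. I would first observe that $A$ is the directed union of its finitely generated subobjects: applying Lemma~\ref{L:im} to $f=\id_A$ and the canonical colimit, $\id_A$ is the union of the images $\Im p$ with $p$ ranging over $\Afp/A$, and each $\Im p$ is finitely generated by Remark~\ref{R:prelim}\ref{I:fingen}; conversely every finitely generated subobject $m\colon M\monoto A$ arises as such an image, since $M$ is a strong quotient $e\colon P\epito M$ of a finitely presentable object, so that $p=m\cdot e$ lies in $\Afp/A$ with $\Im p=M$. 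The finitely generated subobjects of $A$ form a filtered poset (it is closed under binary joins, a join of $M_1,M_2$ being a strong quotient of $M_1+M_2$), and by Lemma~\ref{L:union} the inclusions $m\colon M\monoto A$ form a colimit cocone of this filtered diagram of monomorphisms. As $F$ preserves filtered colimits of monomorphisms, $FA=\colim_m FM$ with colimit cocone $(Fm)$.

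It remains to transfer this colimit from the diagram of finitely generated subobjects to the canonical diagram $D_A$, and this is the only place where $\Afp=\Afg$ is used: it guarantees that each finitely generated subobject $m\colon M\monoto A$ has $M$ finitely presentable, so that the diagram of finitely generated subobjects is the restriction of $D_A$ along a full inclusion $J$ of the poset of finitely generated subobjects of $A$ into $\Afp/A$. I claim $J$ is a final functor. Indeed, for $p\colon P\to A$ in $\Afp/A$ the comma category $p\downarrow J$ is nonempty, containing $e_p\colon P\epito\Im p$, and connected: given two morphisms $h_k\colon P\to M_k$ over $A$ with $M_k$ finitely generated ($k=1,2$), the two composites $P\to M_k\monoto M_1\vee M_2$ agree because $M_1\vee M_2\monoto A$ is monic, so both $h_k$ connect to the single object $M_1\vee M_2$. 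Hence by finality of $J$ the cocone $(Fp)_{p\in\Afp/A}$ on $F D_A$ is colimiting, as its restriction along $J$ is exactly the colimiting cocone $(Fm)_m$ found above. Thus $F$ preserves canonical colimits, and Corollary~\ref{C:refl} finishes the proof.

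The one genuinely delicate point is the finality of $J$, and with it the necessity of the hypothesis $\Afp=\Afg$: once some finitely generated object fails to be finitely presentable, the finitely generated subobjects of $A$ need no longer live inside $\Afp/A$ at all, so there is no mono-subdiagram of the canonical diagram to exploit. A reader who prefers to avoid final functors can instead check directly, via Lemma~\ref{L:refl}, that every morphism $B\to FA$ with $B\in\B_\fp$ factorizes essentially uniquely through some $Fp$ --- existence from $FA=\colim_m FM$, and essential uniqueness by coalescing two given factorizations inside a common finitely generated subobject of $A$ --- but this is merely the finality argument written out by hand.
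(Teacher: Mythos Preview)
Your proof is correct and follows essentially the same route as the paper: both use Proposition~\ref{P:finmono} for one direction, reduce the other via Corollary~\ref{C:refl} to preservation of canonical colimits, invoke Lemma~\ref{L:im} with $f=\id_A$ to exhibit $A$ as a filtered colimit of the monomorphisms $m_p\colon \Im p\monoto A$, and use $\Afp=\Afg$ to place these inside $\Afp/A$. The only cosmetic difference is that the paper verifies the two conditions of Remark~\ref{R:refl} directly (factorization through some $Fp$; essential uniqueness via $Fm_p$ monic and $e_p$ a connecting morphism), whereas you package the same content as finality of the inclusion of finitely generated subobjects into $\Afp/A$ --- a repackaging you yourself note in your final paragraph.
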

\begin{proof} Let $F\colon \A\to \B$ be a finitely bounded functor preserving
  monomorphisms, where $\B$ is lfp. We prove that $F$ is finitary. The
  converse follows from Proposition~\ref{P:finmono}.

  According to Corollary~\ref{C:refl} it suffices to prove that $F$ preserves
  the colimit of the canonical filtered diagram of every object $A$. The proof
  that $FD_A$ has the colimit cocone given by $Fp$ for all $p\colon P\to A$ in $\Afp /
  A$ uses the fact that this is a filtered diagram in the lfp category $\B$. By
  Remark~\ref{R:refl}, it is therefore sufficient to prove that for every object
  $C\in \B_\mathsf{fp}$ and every morphism $c\colon C\to FA$ we have the following two
  properties:
  \begin{enumerate}
  \item\label{T:finbound:1}
    $c$ factorizes through some of the colimit maps 
    \[
      \vcenter{
        \xymatrix{
          & FP\ar[d]^{Fp}\\
          C\ar@{-->}[ur]^{u}\ar[r]_-{c}
          & FA
        }
      }
      \qquad
      (P\in\Afp),
    \]
  \item\label{T:finbound:2} given another such factorization, $c=Fp\cdot v$, then $u$ and
    $v$ are merged by some connecting morphism; i.e.~we have a
    commutative triangle
    \[
      \vcenter{
        \xymatrix{
          P\ar[rr]^-{h}\ar[rd]_{p}
          &&
          P'\ar[ld]^{p'}\\
          &A&
        }}
      \qquad
      (P,P'\in\Afp)
    \]
    with $Fh\cdot u = Fh\cdot v$.
  \end{enumerate}

  Indeed, by applying  Lemma~\ref{L:im} to
  $f = \id_A$, we see that the monomorphisms $m_p\colon \Im p \monoto A$ for $p \in
  \A_\fp/A$ form a colimit cocone of a diagram of monomorphisms. By
  Proposition~\ref{P:finmono}, $F$ preserves this colimit, 
  therefore any  $c\colon C\to FA$ factorizes through some
  $Fm_p\colon F(\Im p)\to FA$. Observe that, since $\Afg=\Afp$, we know by
  Remark~\ref{R:prelim}(5) that every $\Im p$ is finitely presentable,
  hence the morphisms $m_p$ are colimit injections and all $e_p\colon P\epito \Im p$ are connecting
  morphisms of $D_A$. Consequently, (1) is clearly satisfied.
  Moreover, given $u,v\colon C\to FP$ with $Fp \cdot u= Fp \cdot v$, we have
  that $Fe_p\cdot u=Fe_p\cdot v$, since $Fm_p$ is monic, thus (2) is
  satisfied, too.
\end{proof}

\begin{remark}
  Conversely, if every functor from $\A$ to an lfp category fulfils
  the equivalence in the above theorem, then $\Afp=\Afg$. Indeed, for
  every finitely generated object $A$, since $F=\A(A,-)$ preserves
  monomorphisms, we can apply Proposition~\ref{P:finmono} and conclude
  that $F$ is finitary, i.e.~$A\in \Afp$.
\end{remark}

\begin{example}\label{E:unary}
  For $\Un$, the category of algebras with one unary operation, we
  present a finitely bounded endofunctor that is not finitary. Since
  in $\Un$ finitely generated algebras are finitely presentable, this
  shows that the condition of preservation of monomorphisms cannot be
  removed from Theorem~\ref{T:finbound}.

  Let $C_p$ denote the algebra on $p$ elements whose operation forms a
  cycle. Define $F\colon\Un\to \Un$ on objects by
  \[
    FX =
      \begin{cases}
        C_1 + X &
        \text{if $\Un(C_p,X)=\emptyset$ for some prime $p$,}
        \\
        C_1 & \text{else.}
      \end{cases}
  \]
  Given a homomorphism $f\colon  X \to Y$ with $FY=C_1 + Y$, then also
  $FX=C_1 + X$; indeed, in case $FX = C_1$ we would have
  $\Un(C_p, X) \neq \emptyset$ for all prime numbers $p$, and then the
  same would hold for $Y$, a contradiction. Thus we can put
  $Ff=\id_{C_1} + f$. Otherwise $Ff$ is the unique homomorphism to
  $C_1$.
  
  \begin{enumerate}
  \item We now prove that  $F$ is finitely bounded. Suppose we
  are given a finitely generated subalgebra $m_0\colon M_0\monoto FX$. If
  $FX = C_1$ then take $M = \emptyset$ and $m\colon  \emptyset \monoto X$
  the unique homomorphism. Otherwise we have $FX = C_1 + X$, and we
  take the preimages of the coproduct injections under $Ff$ to see that $m_0 = u
  + m$, where $u$ is the unique homomorphism into the terminal algebra
  $C_1$ as shown below:
  \[
    \xymatrix@C+2pc{
      M' \ar@{ >->}[d] \ar[r]^-u & C_1 \ar@{ >->}[d] \\
      M_0 \ar[r]^-{m_0} & C_1 + X \\
      M \ar@{ >->}[u] \ar[r]_-m & X \ar@{ >->}[u]
    }
  \]
  Then we obtain the desired factorization of $m_0$:
  \[
    \xymatrix{
      & C_1 + M = FM\ar[d]^{\id_{C_1} + m = Fm}
      \\
      M_0 = M' + M
      \ar[r]_-{u + m}
      \ar[ru]^-{u + M}
      &
      C_1 + X = FX
    }
  \]
  
  \item However, $F$ is not finitary; indeed, it does not preserve the
  colimit of the following chain of inclusions
  \[
    C_2\hookrightarrow C_2+C_3\hookrightarrow
    C_2+C_3+C_5\hookrightarrow \cdots
  \]
  since every object $A$ in this chain is mapped by $F$ to $C_1+A$ while its
  colimit $X = \coprod_{i\text{ prime}} C_i$ is mapped to $C_1$.
  \end{enumerate}
\end{example}

\vspace{2mm}

We now turn to the question for which lfp categories $\A$ the equivalence
\[
  \text{finitary} \iff \text{finitely bounded}
\]
holds for \emph{all} functors with domain $\A$.

In the following we call a morphism $u\colon X\to Y$ \emph{finitary}
if it factorizes through a finitely presentable object:
\begin{equation}\label{eq:finitary}
  \vcenter{
  \xymatrix{
    & C \mathrlap{~\in \A_\fp}
    \ar[d]^{w}
    \\
    X\ar[ru]^v
     \ar[r]_{u}
    & Y
  }}
\end{equation}
\begin{example}\label{E:loop}
  In the category of graphs consider the following graph on $\Nat$:
  \[
    \xymatrix{
      0
      \ar@(ul,dl)[]
      & 1
      \ar[r]
      & 2
      \ar[r]
      & 3
      \ar[r]
      & \cdots
    }
  \]
  The constant self-map of value $0$ is finitary, but no other endomorphism on this
  graph is finitary.
\end{example}

\begin{remark}
  \begin{enumerate}
  \item If a morphism in an lfp category has a finitely presentable image
    (see Notation~\ref{N:image}), then it is of course finitary.

  \item The converse, namely that every finitary morphism has a
    finitely presentable image, holds whenever $\A_\fp$ is closed under subobjects and
    $\A_\fp= \A_\fg$. Indeed, given a finitary morphism $u\colon X\to Y$, let
    $w\cdot v$ be a factorization through a finitely presentable object $C$.
    Take a (strong epi, mono)-factorization $v = v_2\cdot v_1$ of
    $v$:
    \[
      \xymatrix{
        & C_1
        \ar@{>->}[r]^{v_2}
        \ar[d]^{d}
        & C
        \ar[d]^{w}
        \\
        X
        \ar@{->>}[r]
        \ar@{->>}[ur]^{v_1}
        & \Im(u)
        \ar@{>->}[r]
        & Y
      }
    \]
    Then $C_1$ is finitely presentable and the diagonal fill-in $d$ is strongly
    epic thus, $\Im(u)$ is finitely presentable. This holds e.g.~for sets,
    graphs, posets, vector spaces and semilattices.
  \end{enumerate}
\end{remark}
\begin{definition} \label{D:strict}
  An lfp category is called
  \begin{enumerate}
  \item \emph{semi-strictly lfp} if every object has a finitary endomorphism;
  \item \emph{strictly lfp} if every object has, for each finitely generated
    subobject $m$, a finitary endomorphism $u$ fixing that subobject
    (i.e.~$u\cdot m = m$).
  \end{enumerate}
\end{definition}
\begin{remark} \label{R:strict}
  \begin{enumerate}
  \item \label{R:strict:2semi} `strictly' implies `semi-strictly' due to $0\in
    \A_\fp$: use the image of the unique $b\colon 0\to A$.
  \item \label{R:strict:str} An lfp category is strictly lfp iff for every morphism
    $b\colon B\to A$ with $B\in \A_\fp$ there exist morphisms $b'\colon B'\to A$
    and $f\colon A\to B'$ with $B'\in \A_\fp$ such that the square below
    commutes.
    \[
      \xymatrix{
        B
        \ar[r]^b
        \ar[d]_b
        & A
        \\
        A
        \ar[r]_f
        & B'
        \ar[u]_{b'}
      }
    \]
    Indeed, this condition is necessary: choose, for the image $m$ of $b$,
    a finitary $u\colon A\to A$ with $m=u\cdot m$, thus $b=u\cdot b$.
    We have a
    factorization $u=b'\cdot f$ where $b'\colon B'\to A$ has a finitely
    presentable domain.

    The condition is also sufficient: given a square as above, the morphism
    $u=b'\cdot f$ is finitary and $b = u\cdot b$.

  \item \label{R:strict:semi} An lfp category is semi-strictly lfp iff for every
    morphism $b\colon B\to A$ with $B\in \A_\fp$ there exists a factorization of
    $b$ through a morphism $b'\colon B'\to A$ with $B'\in \A_\fp$ such that
    $\A(B',A) \neq \emptyset$.
    \[
      \xymatrix{B\ar[rd]_b\ar@{-->}[rr]&&B'\ar@<2pt>[ld]^{b'}\\
        &A\ar@<2pt>[ur]^f&}
    \]
    Indeed, this condition is necessary: given a finitary morphism
    $u\colon A\to A$ we have $u=w\cdot v$ as
    in~\eqref{eq:finitary}. Moreover, $B'=B+C$ is finitely presentable
    since both $B$ and $C$ are. Put $b'=[b,w]\colon B'\to A$ and
    \[
      f = \big(
      A\xrightarrow{~v~}
      C\xrightarrow{~\inr~}
      B+C
      \big),
    \]
    where $\inr$ is the right-hand coproduct injection.
    Then $b$ factorizes through $b'$ via the left-hand coproduct
    injection $\inl\colon B\to B+C$.

    The condition is also sufficient: consider $b\colon 0 \to A$ and put $a=b'\cdot f$.

  \item \label{R:strict:fpfg} In every strictly lfp category we have
    $\Afg=\Afp$. Indeed, given $A\in \Afg$ express it as a strong quotient
    $b\colon B\epito A$ of some $B\in \Afp$, see Remark \ref{R:prelim}(5). Then the
    equality $b=b'\cdot f\cdot b$ in \ref{R:strict:str} above implies $b'\cdot f=\id$. Thus,
    $A$ is a split quotient of a finitely presentable object $B'$, hence, $A$ is
    finitely presentable by Remark \ref{R:prelim}(6).
  \end{enumerate}
\end{remark}
\begin{examples}\label{E:set}
 \begin{enumerate}
  \item $\Set$ is strictly lfp: given $b\colon B\to A$ with $B\not=\emptyset$
    factorize it as $e\colon B\epito \Im b$ followed by a split monomorphism
    $b'\colon \Im
    b\to A$. Given a splitting, $f\cdot b'=\id$, we have $b=b'\cdot f\cdot b$.
    The case $B=\emptyset$ is trivial: for $A\not=\emptyset$, $b'$ may be any
    map from a singleton set to $A$.
    
  \item Vector spaces (over a given field) form a strictly lfp
    category. This can be seen directly quite easily, we show this in
    Example \ref{E:strict}\ref{e:Vec} as a consequence of
    Proposition~\ref{P:semi-simple}.
    
  \item For every finite group $G$ the category $G$-$\Set$ of sets
    with an action of $G$ is strictly lfp. This category is equivalent
    to that of presheaves on $G^{\text{op}}$, see
    Lemma~\ref{L:gpd}. 

  \item \label{R:strict:zeroobject} Every lfp category with a zero
    object $0\cong 1$ is semi-strictly lfp. This follows from the fact
    that $0$ is finitely presentable and every object $A$ has the
    finitary endomorphism
    $\big( A \to 1 \cong 0 \to A \big)$. Examples
    include the categories of monoids and groups, which are not
    strictly lfp because in both cases the classes of finitely
    presentable and finitely generated objects differ. 
    
    A bit more generally: let an lfp category $\A$ have a finitely
    presentable terminal object from which morphisms exist to all
    objects outside of $\A_{\fp}$. Then it is semi-strictly lfp. For
    example, the category of posets is semi-strictly lfp.

  \item \label{E:strictNotSemi} An example of an lfp category $\A$ which fulfils
    $\A_\fp = \A_\fg$ but is not semi-strictly lfp is the category of graphs.
    The subgraph of the graph of Example~\ref{E:loop} on $\Nat\setminus\{0\}$ has no
    finitary endomorphism. Another such example is the category of nominal sets
    which is discussed in Example~\ref{ex:Nom}.
  \end{enumerate}
\end{examples}
We will see other examples (and non-examples) below. The following
figure shows the relationships between the different properties:
\[
  \begin{tikzpicture}[
    every node/.append style={
      align=center,
      anchor=center,
      inner sep=2pt,
      minimum height=2em,
    }
    ]
    \node (strict) at (90:25mm) {strictly\\ lfp};
    \node (semi) at (-150:37mm) {semi-\\ strictly\\ lfp};
    \node (fpfg)  at (-30:37mm){$\Afg = \Afp$};
    \begin{scope}[
      every edge/.append style={-implies,double equal sign distance,draw,bend left=10},
      every node/.style={sloped,above},
      ]
    \path (strict) edge[bend right] node {\ref{R:strict}\ref{R:strict:2semi}} (semi);
    \path (semi) edge[bend right] node[anchor=center] {/}
      node[below,outer sep=1mm] {\ref{E:set}\ref{R:strict:zeroobject}}
      (strict);
    \path (strict) edge node {\ref{R:strict}\ref{R:strict:fpfg}} (fpfg);
    \path (fpfg) edge node[anchor=center] {/}
      node[below,outer sep=1mm] {\ref{E:set}\ref{E:strictNotSemi}, \ref{ex:Nom}}
    (strict);
    \path (fpfg) edge
      node[anchor=center] {/}
      node[below,outer sep=1mm] {\ref{E:set}\ref{E:strictNotSemi}, \ref{ex:Nom}}
      (semi);
    \path (semi) edge
    node[anchor=center] {/}
    node[outer sep=1mm] {\ref{E:set}\ref{R:strict:zeroobject}}
    (fpfg);
    \end{scope}
\end{tikzpicture}
\]
\begin{theorem}\label{T:boundstrict}
  Let $\A$ be a strictly lfp category, and  $\B$  an lfp category
  with $\B_{\mathsf{fg}} = \B_{\mathsf{fp}}$. Then for all functors from
  $\A$ to $\B$ we have the equivalence
  \[
    \text{finitary} \iff \text{finitely bounded}.
  \]
\end{theorem}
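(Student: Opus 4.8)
The plan is to prove the two implications separately. The direction ``finitary $\Rightarrow$ finitely bounded'' is short and uses only the hypothesis $\B_\fg=\B_\fp$; the converse is where strictness of $\A$ enters, and it follows the same reflection‑of‑colimits scheme as the proof of Theorem~\ref{T:finbound}, with strictness playing the role that preservation of monomorphisms played there.

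For ``finitary $\Rightarrow$ finitely bounded'' I would fix an object $A$ and a finitely generated subobject $m_0\colon M_0\monoto FA$ and express $A$ as the colimit of its canonical filtered diagram $D_A\colon\A_\fp/A\to\A$ (Remark~\ref{R:prelim}\ref{I:canColim}). As $F$ is finitary, $FA=\colim FD_A$ with colimit injections $Fp$ for $(P,p)\in\A_\fp/A$. Since $\B_\fg=\B_\fp$, the object $M_0$ is finitely presentable, so $\B(M_0,-)$ preserves this colimit; hence $m_0=Fp\cdot u$ for some $(P,p)$ and some $u\colon M_0\to FP$. Taking the (strong epi, mono)-factorization $p=m_p\cdot e_p$, the object $\Im p$ is finitely generated (Remark~\ref{R:prelim}\ref{I:fingen}), and $m_0=Fm_p\cdot(Fe_p\cdot u)$ is the required factorization of $m_0$ through the $F$-image of the finitely generated subobject $m_p\colon\Im p\monoto A$.

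For the converse, by Corollary~\ref{C:refl} it suffices to show that $F$ preserves the canonical colimit $FA=\colim FD_A$ for each $A$. Since this cocone lies in the lfp category $\B$, by Lemma~\ref{L:refl} together with the description of filtered colimits in $\Set$ (Remark~\ref{R:refl}) I only need, for each $C\in\B_\fp$ and each $c\colon C\to FA$, to verify: (i) $c$ factorizes through some colimit injection $Fp$; and (ii) if $c=Fp\cdot u=Fp\cdot v$ for $u,v\colon C\to FP$, then $u$ and $v$ are merged by a connecting morphism of $D_A$. For (i), factorize $c=m_0\cdot q$ with $q$ strongly epic and $m_0\colon M_0\monoto FA$ monic, so that $M_0$ is finitely generated (Remark~\ref{R:prelim}\ref{I:fingen}); finite boundedness then yields a finitely generated subobject $m\colon M\monoto A$ and a morphism $w\colon M_0\to FM$ with $m_0=Fm\cdot w$. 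Now strictness enters: by Definition~\ref{D:strict}(2) there is a finitary endomorphism $t\colon A\to A$ with $t\cdot m=m$; writing $t=h\cdot g$ with $g\colon A\to Q$, $h\colon Q\to A$ and $Q\in\A_\fp$ as in~\eqref{eq:finitary}, the equality $t\cdot m=m$ gives $Fh\cdot Fg\cdot Fm=Fm$, whence $c=Fm\cdot w\cdot q=Fh\cdot(Fg\cdot Fm\cdot w\cdot q)$ factorizes through the colimit injection $Fh$, since $(Q,h)\in\A_\fp/A$.

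For (ii), I would apply the reformulation of strictness in Remark~\ref{R:strict}\ref{R:strict:str} to $p\colon P\to A$ (with $P\in\A_\fp$): there are $p'\colon P'\to A$ and $f\colon A\to P'$ with $P'\in\A_\fp$ and $p'\cdot f\cdot p=p$. Then $k:=f\cdot p\colon P\to P'$ satisfies $p'\cdot k=p$, so it is a connecting morphism $(P,p)\to(P',p')$ of $D_A$, and $Fk\cdot u=Ff\cdot Fp\cdot u=Ff\cdot Fp\cdot v=Fk\cdot v$, as needed. I expect the main obstacle to be step (i): finite boundedness only factors the monic part of $c$ through the $F$-image of a \emph{finitely generated} subobject $M$ of $A$, whereas the colimit injections of $FD_A$ come from \emph{finitely presentable} domains, and without preservation of monomorphisms there is no direct way to replace $M$ by a finitely presentable object. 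Strictness bridges exactly this gap: a finitary endomorphism fixing $M$ reroutes $c$ through a finitely presentable ``waypoint'' $Q$. The analogous subtlety in (ii), namely that the maps $Fp$ need not be monic and so cannot simply be cancelled, is again resolved by Remark~\ref{R:strict}\ref{R:strict:str}.
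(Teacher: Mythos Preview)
Your proof is correct and follows the same overall architecture as the paper's: the forward direction uses preservation of the canonical colimit together with $\B_\fg=\B_\fp$, and the backward direction verifies the two conditions of Lemma~\ref{L:refl} for the cocone $(Fp)_{p\in\A_\fp/A}$, with condition~(ii) handled exactly as the paper does via Remark~\ref{R:strict}\ref{R:strict:str}.

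The one place you take a longer detour is step~(i). You identify the ``main obstacle'' as the mismatch between the finitely \emph{generated} subobject $m\colon M\monoto A$ produced by finite boundedness and the finitely \emph{presentable} domains required for colimit injections of $D_A$, and you resolve it by rerouting through a finitary endomorphism fixing $m$. This works, but it is unnecessary: by Remark~\ref{R:strict}\ref{R:strict:fpfg} every strictly lfp category satisfies $\A_\fg=\A_\fp$, so $M$ is already finitely presentable and $m$ itself is already a colimit injection of $D_A$. The paper simply invokes this fact and is done with~(i) in one line. Your argument is still valid, and it has the minor advantage of showing concretely how the definition of strictness (rather than its consequence $\A_\fg=\A_\fp$) yields the factorization; but the paper's route is shorter. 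Similarly, in the forward direction the paper uses $\A_\fg=\A_\fp$ to write $A$ directly as a directed union of finitely generated (hence finitely presentable) subobjects, avoiding your image-factorization step.
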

\begin{proof} 
  ($\Longrightarrow$) Let $F\colon  \A \to \B$ be finitary. By
  Remark~\ref{R:strict}\ref{R:strict:fpfg} we know that $\Afp = \Afg$. Given a
  finitely generated subobject $m\colon  M \monoto FA$, write $A$ as the
  directed colimit of all of its finitely generated subobjects
  $m_i\colon  A_i \monoto A$. Since $F$ is finitary, it preserves this
  colimit, and since $M$ is finitely generated, whence finitely
  presentable, we obtain some $i$ and some $f\colon  M \to FA_i$ such that
  $Fm_i \cdot f = m$ as desired.

  ($\Longleftarrow$) Suppose that $F\colon  \A \to \B$ is finitely bounded. We verify the
  two properties~\ref{T:finbound:1} and~\ref{T:finbound:2} in the proof of
  Theorem~\ref{T:finbound}. In order to verify~\ref{T:finbound:1}, let $c\colon  C \to
  FA$ be a morphism with $C$ finitely presentable. Then we have the finitely
  generated subobject $\Im c \monoto FA$, and this factorizes through $Fm\colon  FM
  \to FA$ for some finitely generated subobject $m\colon  M \monoto A$ since $F$ is
  finitely bounded. Then $c$ factorizes through $Fm$, too, and we are done since
  $M$ is finitely presentable by Remark~\ref{R:strict}(4).

  To verify~(2), suppose that we have $u,v\colon  C \to FB$ and
  $b\colon  B \to A$ in $\Afp/A$ such that $Fb \cdot u = Fb \cdot v$. Now
  choose $f\colon  A \to B'$ with $b = b' \cdot (b \cdot f)$ (see
  Remark~\ref{R:strict}\ref{R:strict:str}). Put $h = f\cdot b$ to get $b = b'\cdot h$
  as required. Since $Fb\cdot u = Fb\cdot v$, we conclude
  $Fh\cdot u = Ff \cdot Fb \cdot u = Ff \cdot Fb \cdot u = Fh\cdot v$.
\end{proof}
\begin{corollary}
  A functor between strictly lfp categories is finitary iff it is
  finitely bounded.
\end{corollary}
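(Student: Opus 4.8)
The plan is to obtain this as an immediate consequence of Theorem~\ref{T:boundstrict}. That theorem gives the equivalence $\text{finitary} \iff \text{finitely bounded}$ for every functor $F\colon\A\to\B$ under two hypotheses: that $\A$ is strictly lfp, and that $\B$ is an lfp category with $\B_{\mathsf{fg}}=\B_{\mathsf{fp}}$. So, given a functor $F$ between two strictly lfp categories $\A$ and $\B$, the first hypothesis is satisfied by assumption, and it only remains to check that the second hypothesis is automatic for a strictly lfp codomain.

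This last point is precisely Remark~\ref{R:strict}\ref{R:strict:fpfg}: in every strictly lfp category the finitely generated objects are exactly the finitely presentable ones. Applying it to $\B$ yields $\B_{\mathsf{fg}}=\B_{\mathsf{fp}}$, so Theorem~\ref{T:boundstrict} applies verbatim and the corollary follows.

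I do not anticipate any real obstacle here — the corollary is a pure specialization, and if anything it is weaker than the theorem it rests on, since strict local presentability of $\B$ is a strictly stronger requirement than $\B_{\mathsf{fg}}=\B_{\mathsf{fp}}$. (One could therefore just as well have phrased the statement with the weaker hypothesis on $\B$.) Should one prefer a self-contained argument, the route would be to invoke Remark~\ref{R:strict}\ref{R:strict:fpfg} for both categories at the start, giving $\A_{\mathsf{fp}}=\A_{\mathsf{fg}}$ and $\B_{\mathsf{fp}}=\B_{\mathsf{fg}}$, and then reproduce the two verifications of properties~\ref{T:finbound:1} and~\ref{T:finbound:2} carried out in the proofs of Theorems~\ref{T:finbound} and~\ref{T:boundstrict}.
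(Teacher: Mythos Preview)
Your proposal is correct and matches the paper's intended argument: the corollary is stated immediately after Theorem~\ref{T:boundstrict} without a separate proof, and the only missing ingredient is precisely Remark~\ref{R:strict}\ref{R:strict:fpfg}, which you invoke to get $\B_{\mathsf{fg}}=\B_{\mathsf{fp}}$ from the strict local finite presentability of $\B$.
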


\begin{remark}
  Consequently, a set functor $F$ is finitary if and only if it is
  finitely bounded. The latter means precisely that every element of
  $FX$ is contained in $Fm[FM]$ for some finite subset $m\colon  M \subto
  X$.

  This result was formulated already in~\cite{AdamekP04}, but the
  proof there is unfortunately incorrect.
\end{remark}

\begin{openproblem}
  Is the above implication an equivalence? That is, given an lfp category $\A$
  such that every finitely bounded functor into lfp categories is finitary, does
  this imply that $\A$ is strictly lfp?
\end{openproblem}
\begin{theorem}
  \label{T:equiv2semstrict}
  Let $\A$ be an lfp category such that for functors
  $F\colon  \A \to \Set$ we have the equivalence
   \[
    \text{finitary} \iff \text{finitely bounded}.
  \]
  Then $\A$ is semi-strictly lfp and $\Afg = \Afp$.
\end{theorem}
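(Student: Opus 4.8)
The plan is to prove the contrapositive of each of the two conclusions separately, in each case constructing an explicit functor $F\colon \A\to\Set$ that is finitely bounded but not finitary, thereby violating the hypothesis. So suppose first that $\A$ is \emph{not} semi-strictly lfp; by Remark~\ref{R:strict}\ref{R:strict:semi} this means there is a morphism $b\colon B\to A$ with $B\in\Afp$ that does \emph{not} factorize through any $b'\colon B'\to A$ with $B'\in\Afp$ for which $\A(B',A)\neq\emptyset$. Equivalently, writing $a\colon B\to A$ in $\Afp/A$, no morphism in $\Afp/A$ "below" $b$ admits a section into $A$. I would use this $b$ to build a set functor that cannot be finitary.

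The natural candidate is a quotient of a hom-functor. Consider $G=\A(B,-)\colon\A\to\Set$; it preserves monomorphisms and (being a hom of an fp object) is finitary. The idea is to modify $G$ at the object $A$: define $FX$ to be $GX=\A(B,X)$ for "most" $X$, but collapse the troublesome element. Concretely, I would try $FX = \A(B,X)$ if there is \emph{no} morphism $A\to X$ whose composite with $b$ lies in a suitable distinguished part, and $FX = 1$ (a singleton) otherwise — mirroring the construction in Example~\ref{E:unary}, where $F$ sends $X$ to $C_1+X$ or $C_1$ according to a colimit-sensitive condition on $X$. The key design constraints are: (i) the defining condition must be such that if $f\colon X\to Y$ and $FY$ is the "big" value then so is $FX$, so that $F$ extends to a functor; (ii) $F$ must be finitely bounded; and (iii) $F$ must fail to preserve some filtered colimit. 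For (iii) one exhibits $A$ as the filtered colimit of its canonical diagram $D_A\colon \Afp/A\to\A$ and arranges, using the chosen $b$, that every $P$ in $\Afp/A$ gets the "big" value $FP$ while $FA$ is the singleton — exactly as the prime cycles worked in Example~\ref{E:unary}. Finite boundedness (ii) is cheap when $\Set$ is the codomain: by Example~\ref{E:bounded}\ref{E:bounded:1} one only needs that each element of $FX$ is hit through $Fm$ for a finitely generated subobject $m\colon M\monoto X$, and for the hom-functor part this is automatic since $B\in\Afp\subseteq\Afg$ and any $g\colon B\to X$ factors through its image $\Im g\monoto X$, a finitely generated subobject.

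For the second conclusion, $\Afg=\Afp$, I would again argue contrapositively: if there is a finitely generated $A$ that is not finitely presentable, then $F=\A(A,-)$ preserves monomorphisms and is finitely bounded by Example~\ref{E:bounded}\ref{E:bounded:2} (since $A\in\Afg$), yet it is not finitary (since $A\notin\Afp$) — this is precisely the observation already recorded in the Remark following Theorem~\ref{T:finbound}. So this half is essentially immediate from material already in the excerpt; only the semi-strictness half requires the genuine construction.

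The main obstacle I anticipate is verifying functoriality and finite boundedness of the modified functor in the general lfp setting — in Example~\ref{E:unary} the condition "$\Un(C_p,X)=\emptyset$ for some prime $p$" had convenient closure properties under homomorphisms, and I will need an analogous condition phrased purely in terms of the failed factorization of $b$. A promising formulation is: let $FX=1$ whenever there exists $g\colon A\to X$ such that $g\cdot b$ factors through $b$ (equivalently $g\cdot b$ is in the subfunctor of $\A(B,-)$ generated by $b$ along $A$), and $FX=\A(B,X)$ otherwise; I then need that this "$FX=1$" condition propagates along morphisms $X\to Y$ in the right direction, which should follow because a witness $g\colon A\to Y$ composed after $X\to Y$... — here care is needed about the direction, and it may be cleaner to take the dual condition or to quotient $\A(B,-)$ by an equivalence identifying everything reachable from $b$ via an endomorphism-like detour. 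Pinning down this condition so that (i), (ii), (iii) all hold simultaneously — and in particular so that the colimit-injection maps $Fp\colon FP\to FA$ become non-jointly-surjective exactly at $A$ — is the crux of the argument, and I expect it to occupy the bulk of the proof.
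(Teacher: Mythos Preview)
Your treatment of $\Afg=\Afp$ is correct and matches the paper exactly. Your overall strategy for semi-strictness---take a witnessing $b\colon B\to A$ for the failure and build a functor that behaves like the one in Example~\ref{E:unary}---is also the paper's strategy. The gap is precisely where you say it is: the switching condition. Your tentative formulation ``$FX=1$ whenever there exists $g\colon A\to X$ such that $g\cdot b$ factors through $b$'' is both over-engineered and wrong (it does not propagate along morphisms in the needed direction, as you suspected). The condition you want is simply
\[
FX=\begin{cases}1&\text{if }\A(A,X)\neq\emptyset,\\[2pt]1+\A(B,X)&\text{if }\A(A,X)=\emptyset.\end{cases}
\]
This is exactly the condition the paper uses. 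Functoriality (your point~(i)) is immediate because $\A(A,Z')=\emptyset$ forces $\A(A,Z)=\emptyset$ for any $Z\to Z'$. Finite boundedness (your~(ii)) follows from your own argument via images, noting that a subobject of $Z$ with $\A(A,Z)=\emptyset$ again has no map from $A$. For (iii), restrict the canonical diagram to the cofinal subdiagram $D_A'$ of those $b'\colon B'\to A$ through which $b$ factorizes; by the failure of semi-strictness every such $B'$ has $\A(A,B')=\emptyset$, so $FB'=1+\A(B,B')$ and the colimit is $1+\A(B,A)$, which is not $1=FA$ since $b\in\A(B,A)$.

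Two further remarks. First, you wrote $\A(B',A)\neq\emptyset$ in your restatement of Remark~\ref{R:strict}\ref{R:strict:semi}; this is trivially satisfied by $b'$ itself---you mean $\A(A,B')\neq\emptyset$. Second, the paper actually passes to a $\Set^S$-valued functor with $S=\Afp$, using \emph{all} hom-functors $\A(s,-)$ simultaneously rather than just $\A(B,-)$, after first observing that the hypothesis for $\Set$-valued functors transfers to $\Set^S$ via the coproduct-of-sorts functor. Your single-sorted version with $\A(B,-)$ alone is in fact sufficient and slightly cleaner; the paper's many-sorted version is not needed for the argument to go through.
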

\proof
  The second statement easily follows from Example~\ref{E:bounded}\ref{E:bounded:2}.
  Suppose that $\A$ is an lfp category such that the above equivalence
  holds for all functors from $\A$ to $\Set$. Then the same
  equivalence holds for all functors $F\colon  \A \to \Set^S$, for $S$ a set of
  sorts. To see this, denote by $C\colon  \Set^S \to \Set$ the functor forming
  the coproduct of all sorts. It is easy to see that $C$ creates
  filtered colimits. Thus, a functor $F\colon  \A \to \Set^S$ is finitary iff
  $C\cdot F\colon  \A \to \Set$ is. Moreover, $F$ is finitely bounded iff
  $C\cdot F$ is; indeed, this follows immediately from Example~\ref{E:bounded}\ref{E:bounded:1}.
  
  We proceed to prove the semi-strictness of $\A$. Put
  $S= \Afp$. Given a
  morphism
  \[
    b\colon B\to A\qquad\text{with $B\in \Afp$}
  \]
  we present $b'$ and $f$ as required in Remark~\ref{R:strict}\ref{R:strict:str}. Define a functor
  $F\colon \A\to Set^S$ on objects $Z$ of $\A$ by
  \[
    FZ=\begin{cases}
      \mathds{1}+(\A(s,Z))_{s\in S}& \text{if $\A(A,Z)=\emptyset$}\\
      \mathds{1}&\text{else,}
    \end{cases}
  \]
  where $ \mathds{1}$ denotes the terminal $S$-sorted set. Given a
  morphism $f\colon  Z \to Z'$ we need to specify $Ff$ in the case where
  $\A(A,Z')= \emptyset$: this implies $\A(A,Z)  =\emptyset$ and we put
  \[
    Ff = \id_\mathds{1} + (\A(s,f))_{s \in S}.
  \]
  Here $\A(s,f)\colon  \A(s, Z) \to \A(s, Z')$ is given by $u \mapsto f
  \cdot u$, as usual. It is easy to verify that $F$ is a well-defined
  functor.

  \begin{enumerate}
  \item Let us prove that $F$ is finitely bounded. The category
  $\Set^S$ is lfp with finitely generated objects $(X)_{s\in S}$
  precisely those for which the set $\coprod_{s \in S} X_s$ is
  finite. Let $m_0\colon  M_0 \monoto FZ$ be a finitely generated
  subobject. We present a finitely generated subobject $m\colon  M \monoto
  Z$ such that $m_0$ factorizes through $Fm$. This is trivial in the
  case where $\A(A,Z) \neq \emptyset$: choose any finitely generated
  subobject $m\colon  M \monoto Z$ (e.g.~the image of the unique morphism
  from the initial object to $Z$: cf.~Remark~\ref{R:prelim}(5)). Then
  $Fm$ is either $\id_\mathds{1}$ or a split epimorphism, since $FZ =
  \mathds{1}$ and in $FM$ each sort is non-empty. Thus, we have $t$ with $Fm\cdot t=\id$ and $m_0$
  factorizes through $Fm$:
  \[
    \xymatrix{
      & FM \ar@<-2pt>@{->>}[d]_{Fm} \\
      M_0\ar[ru]^-{t \cdot m_0} \ar@{ >->}[r]_-{m_0}
      &
      FZ = \mathds{1}
      \ar@<-2pt>@{>->}[u]_t
      }
  \]
  In the case where $\A(A,Z) = \emptyset$ we have $m_0 = m_1 + m_2$
  for subobjects
  \[
    m_1 \colon  M_1 \monoto \mathds{1}
    \qquad
    \text{and}
    \qquad
    m_2\colon  M_2 \monoto (\A(s,Z))_{s \in S}.
  \]
  For notational convenience, assume $(M_2)_s \subseteq \A(s, Z)$ and $(m_2)_s$
  is the inclusion map for every $s \in S$. Since $M_0$ is finitely generated,
  $M_2$ contains only finitely many elements $u_i\colon  s_i \to Z$, $i = 1, \ldots,
  n$. Factorize $[u_1, \ldots, u_n]$ as a strong epimorphism $e$ followed by a
  monomorphism $m$ in $\A$ (see Remark~\ref{R:prelim}\ref{I:factorization}):
  \[
    \xymatrix{
      \coprod_{i=1}^n s_i \ar@{->>}[r]^-{e}
      &
      M \ar@{ >->}[r]^-m
      &
      Z
    }.
  \]
  Then $\A(A,M) = \emptyset$, therefore $Fm = \id_\mathds{1} +
  (\A(s,m))_{s\in S}$. Since every element $u_i\colon  s_i \to Z$ of $M_2$
  factorizes through $m$ in $\A$, we have
  \[
    u_i = m \cdot u_i'
    \quad
    \text{for $u_i'\colon  s_i \to M$ with $[u_1', \ldots, u_n'] = e$.}
  \]
Let $v:M_2 \to \A(s,M)$ be the $S$-sorted map taking each $u_i$ to $u'_i$.
  Then the inclusion map $m_2\colon  M_2 \to (\A(s,Z))_{s \in S}$ has the
  following form
  \[
    m_2 = \left(M_2 \xrightarrow{v} (\A(s,M))_{s \in S}
      \xrightarrow{(\A(s,m))_{s\in S}} (\A(s,Z))_{s\in S}\right).
  \]
  The desired factorization of $m_0 = m_1 + m_2$ through $Fm =
  \id_\mathds{1} + (\A(s,m))_{s\in S}$ is as follows:
  \[
    \xymatrix@C+2pc{
      &
      \mathds{1} + (\A(s,M))_{s\in S}
      \ar[d]^{\id + (\A(s,m))_{s\in S}}
      \\
      M_0 = M_1 + M_2
      \ar@{ >->}[r]_-{m_0 = m_1 + m_2}
      \ar[ru]^-{m_1 + v}
      &
      \mathds{1} + (\A(s,Z))_{s \in S}
      }
  \]

\item We thus know that $F$ is finitary, and we will use this to prove that $\A$
  is semi-strictly lfp. That is, as in Remark~\ref{R:strict}\ref{R:strict:semi}
  we find $b'\colon  B' \to A$ in $\Afp/A$ through which $b$ factorizes and which
  fulfils $\A(A,B') \neq \emptyset$. Recall from Remark~\ref{R:prelim}(2) that
  $A = \colim D_A$. Our morphism $b$ is an object of the diagram scheme $\Afp/A$
  of $D_A$. Let $D_A'$ be the full subdiagram of $D_A$ on all objects $b'$ such
  that $b$ factorizes through $b'$ in $\A$ (that is, such that a connecting
  morphism $b \to b'$ exists in $\Afp/A$). Then $D_A'$ is also a filtered
  diagram and has the same colimit, i.e.~$A = \colim D_A'$. Since $F$ preserves
  this colimit and $FA = \mathds{1}$, we get
  \[
    \mathds{1} \cong \colim FD_A'.
  \]
 
  Assuming that $\A(A,B') = \emptyset$ for all $b'\colon  B'\to A$ in
  $D_A'$, we obtain a contradiction: the objects of $FD_A'$ are
  $\mathds{1} + (\A(s,B'))_{s\in S}$, and since for every $s \in S$
  the functor $\A(s, -)$ is finitary, the colimit of all $\A(s,B')$ is
  $\A(s,A)$. We thus obtain an isomorphism
  \[
    \mathds{1} \cong \mathds{1} + (\A(s,A))_{s\in S}.
  \]
  This means $\A(s,A) = \emptyset$ for all $s \in S$, in particular
  $\A(B,A) = \emptyset$, in contradiction to the existence of the given
  morphism $b\colon  B \to A$.

  Therefore, there exists $b'\colon  B' \to A$ in $D_A'$, i.e.~$b'$ through
  which $b$ factorizes with $\A(A,B') \neq \emptyset$, as required.
  \endproof
\end{enumerate}

We now present examples of strictly lfp categories. All of them happen
to be either atomic toposes or semi-simple (aka atomic) abelian
categories. Recall that an object $A$ is called \textit{simple}, or an
\emph{atom}, if it has no nontrivial subobject. That is, every
subobject of $A$ is either invertible or has the initial object as a
domain.

\begin{definition}\label{D:semi-simple}
  A category is called {\em semi-simple} or {\em atomic} if every
  object is a coproduct of simple objects.
\end{definition}
\begin{proposition}\label{P:semi-simple}
  Let a semi-simple, cocomplete category have only finitely many
  simple objects (up to isomorphism), all of them finitely
  presentable. Then it is strictly lfp.
\end{proposition}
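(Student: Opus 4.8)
The plan is to verify the characterisation of strictly lfp categories from Remark~\ref{R:strict}\ref{R:strict:str}: given any morphism $b\colon B\to A$ with $B\in\A_\fp$, we must produce $b'\colon B'\to A$ and $f\colon A\to B'$ with $B'\in\A_\fp$ and $b = b'\cdot f\cdot b$. First I would record the basic structural facts that follow from the hypotheses. Write $S_1,\dots,S_n$ for a representative set of the (finitely many) simple objects, all finitely presentable by assumption. Since the category is cocomplete, coproducts of these exist, and since each $S_k$ is finitely presentable, every \emph{finite} coproduct of simples is finitely presentable (a finite coproduct of finitely presentable objects is finitely presentable in any cocomplete category). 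Every object $A$ is, by semi-simplicity, a coproduct $A\cong\coprod_{j\in J}S_{k_j}$ of simples; grouping by isomorphism type, $A\cong\coprod_{k=1}^n S_k^{(J_k)}$ for index sets $J_k$. I would next argue that $A$ is the filtered colimit of its finite sub-coproducts $\coprod_{k=1}^n S_k^{(F_k)}$ with each $F_k\subseteq J_k$ finite; these are finitely presentable, so this already shows the category is lfp, with $\A_\fp$ consisting (up to retracts/splittings) of the finite coproducts of simples.

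The key step is then to understand morphisms out of a finitely presentable object, and in particular morphisms $b\colon B\to A$. Since $B\in\A_\fp$ and $A=\colim$ of its finite sub-coproducts along the filtered diagram just described, $b$ factorises through some finite sub-coproduct $A_0=\coprod_{k=1}^n S_k^{(F_k)}\monoto A$, say $b = (A_0\monoto A)\cdot b_0$ with $b_0\colon B\to A_0$. Now $A_0$ is finitely presentable, and the inclusion $A_0\monoto A$ is a coproduct injection, hence a split monomorphism: the complementary summand gives a retraction $r\colon A\to A_0$ with $r\cdot(A_0\monoto A)=\id_{A_0}$. Set $B' = A_0$, $b' = (A_0\monoto A)$, and $f = r\colon A\to A_0$. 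Then $f\cdot b = r\cdot(A_0\monoto A)\cdot b_0 = b_0$, and $b'\cdot f\cdot b = (A_0\monoto A)\cdot b_0 = b$, exactly the required equation. Thus by Remark~\ref{R:strict}\ref{R:strict:str} the category is strictly lfp.

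The main obstacle, and the point deserving care, is the claim that every coproduct injection into a coproduct of simples is a split monomorphism — equivalently, that finite sub-coproducts are retracts. This is immediate once we have the coproduct decomposition $A\cong A_0 \amalg A_1$ with $A_0$ the chosen finite part and $A_1$ the coproduct of the remaining simple summands (which exists by cocompleteness): the codiagonal-style map $[\,\id_{A_0},\,!\,]\colon A_0\amalg A_1\to A_0$, where $!$ is the unique map $A_1\to A_0$ through the initial object if needed — more simply, $[\id_{A_0}, ?]$ where we just need \emph{some} map $A_1\to A_0$, and such a map exists because $A_0$ is nonempty as soon as $F_k$ can be chosen nonempty; the genuinely uniform choice is to take the retraction $A_0\amalg A_1\to A_0$ corresponding to $\id_{A_0}$ on the first summand and the unique morphism $A_1\to 0\to A_0$ is \emph{not} available, so instead one uses that $A_0$ is a retract of $A$ directly via $[\id_{A_0}, g]$ for any $g\colon A_1\to A_0$, which exists whenever $A_0\neq 0$; the degenerate case $A_0=0$ (i.e.\ $b$ factors through the initial object) is handled separately since then $b' = (0\to A)$ works with $f$ arbitrary and $b = b'\cdot f\cdot b$ holds because $B\to 0$ forces uniqueness. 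I would also double-check the edge case $A=0$, where $\A_\fp\ni 0$ and $b\colon B\to 0$ forces everything, and note that finiteness of the set of simples is exactly what guarantees closure of $\A_\fp$ under the relevant constructions and keeps $\A_\fp$ essentially small.
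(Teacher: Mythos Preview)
Your approach is the same as the paper's, but the key step---the existence of the retraction $r\colon A\to A_0$---has a real gap. You need a morphism $g\colon A_1\to A_0$ and assert that one ``exists whenever $A_0\neq 0$'', but this is false: take $\A=\Set\times\Set$, with simples $S_1=(1,\emptyset)$ and $S_2=(\emptyset,1)$; if $A_0=S_1$ and $A_1$ contains a copy of $S_2$, there is no morphism $A_1\to A_0$. Your degenerate-case patch for $A_0=0$ also fails, since it requires a morphism $f\colon A\to 0$, which need not exist.

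The fix, and this is precisely where the paper uses the finiteness hypothesis, is to \emph{enlarge} the finite sub-coproduct before proceeding: choose the finite index set so that $A_0$ contains at least one copy of every isomorphism type of simple occurring in $A$. This is possible because there are only finitely many such types, and $b$ still factors through the enlarged $A_0$. Then every simple summand of $A_1$ is isomorphic to some summand of $A_0$, and assembling these isomorphisms gives the required $g\colon A_1\to A_0$. Your closing remark misattributes the role of finiteness: it is not what makes $\A_\fp$ essentially small (countably many simples would do for that), but exactly what makes this enlargement step go through.
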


\begin{proof}
  \begin{enumerate}
  \item The given category $\A$ is lfp. Indeed, it is cocomplete and every finite coproduct of simple objects is finitely presentable. Moreover, every object $\coprod_{i\in  I}A_i$, $A_i$ simple, is a filtered colimit of finite subcoproducts. Conversely, every finitely presentable object is, obviously, a split quotient of a finite coproduct of simple objects. Thus, for the countable set $M$ representing all these finite coproducts we see that $\Afp$ consists of split quotients of objects in $M$. Therefore $\Afp$ is essentially a
set: split quotients of any object $X$ correspond bijectively to idempotent endomorphisms of $X$, and thus form a set. Hence, $\A$ is lfp.

\item\label{P:semi-simple:2} Let $b\colon B\to A=\coprod_{i\in I}A_i$ be a morphism with all
  $A_i$ simple and $B$ finitely presentable. Then $b$ factorizes
  through a finite subcoproduct $a_J\colon \coprod_{i\in J}A_i\to A$
  ($J\subseteq I$ finite), say, $b=a_J\cdot b'$. Since $\A$ has
  essentially only a finite set of simple objects, $J$ can be chosen
  so that each $A_i$ is isomorphic to some $A_j$, $j\in
  J$. Consequently, there exists a morphism
  $g\colon \coprod_{i\in I\setminus J}A_i \to \coprod_{j\in
    J}A_j$. The following composite $u\colon A\to A$
  $$
  A = \big(\coprod_{j\in  J}A_j+\coprod_{i\in  I\setminus J}A_i\big)
  \xrightarrow{[\id,g]}
  \coprod_{j\in  J}A_j\xrightarrow{a_J} A
  $$
  is finitary and fulfils, since $[\id,g]\cdot a_J=\id$, the desired equation
  $$u\cdot b=a_J\cdot [\id,g]\cdot a_J\cdot b'=a_J\cdot b'=b.$$
\end{enumerate}
\end{proof}

\begin{examples}\label{E:strict}
  \begin{enumerate}
  \item\label{e:SetS}$\Set^S$ is strictly lfp iff $S$ is
    finite.  Indeed, the sufficiency is a clear consequence of Proposition \ref{P:semi-simple}.
    Conversely, if $S$ is
    infinite then the identity on the terminal object, which is its
    unique endomorphism, is not finitary, whence $\Set^S$ is not
    semi-strictly lfp.
    
  \item\label{e:Vec} For every field $K$ the category
    $K$-$\mathsf{Vec}$ of vector spaces is strictly lfp. Indeed, the
    simple spaces are those of dimension $0$ or $1$, and every space is a
    coproduct of copies of $K$.

  \item\label{e:Mod} We recall that a ring $R$ is called \emph{semi-simple}
    if the category $R$-$\mathsf{Mod}$ of left modules is
    semi-simple. For example, the matrix ring $K^{(n)}$ for every field
    $K$ and every finite $n$ is semi-simple.
   
    The category $R$-$\mathsf{Mod}$ is strictly lfp for every finite
    semi-simple ring $R$. Indeed, every simple module $A$ is a
    quotient of the module $R$: in case $A\not=\mathbf{0}$, choose
    $a\in A\setminus \{0\}$. Since $Ra$ is a submodule of $A$, we conclude
    $$
    A=Ra \cong R/\mathord{\sim}
    $$
    where $\sim$ is the congruence defined by $x\sim y$ iff $Rx=Ry$.
   
    Each quotient module $R/\mathord{\sim}$ is finitely presentable. Indeed, let
    $a_i\colon A_i\to A$, $i\in I$, be a filtered colimit and
    $f\colon R/\mathord{\sim} \to A$ a homomorphism. Since $R/\mathord{\sim}$ is
    finite, $f$ factorizes in $\Set$ through $a_j$ for some $j\in J$:
    $f=a_j\cdot f'$. It remains to choose $j$ so that
    $f'\colon R/\mathord{\sim} \to A_j$ is a homomorphism. Given $r, s\in R$ we
    know that $rf([s])=f([rs])$, thus $a_j$ merges $rf'([s])$ and
    $f'([rs])$. Our colimit is filtered, hence for the given pair we
    can assume, without loss of generality, that
    $rf'([s])=f'([rs])$. Moreover, since $R\times R$ is finite, this
    assumption can be made for all pairs $(r,s)$ at once. That is, by
    a suitable choice of $j$ we achieve that $f'$ preserves scalar
    multiplication. A completely analogous argument shows that $j$ can
    be chosen so that, moreover, $f'$ preserves addition. Thus, it is
    a homomorphism.
   
  \item\label{e:topos} A Grothendieck topos is called \textit{atomic},
    see \cite{BarrDiaconescu}, if it is semi-simple. For example, the
    presheaf topos $\Set^{{\cal{C}}^\text{op}}$ is atomic iff
    $\cal{C}$ is a groupoid, i.e.~its morphisms are all invertible,
    see Sect.~7(2) in op.~cit. It follows from the Proposition~\ref{P:semi-simple}
    that every atomic Grothendieck topos with a finite set of finitely
    presentable atoms (up to isomorphism) is strictly lfp.
   
    More atomic toposes can be found in
    \cite[Example~3.5.9]{JohnstoneElephant2vol}. Not all atomic Grothendieck
    toposes are semi-strictly lfp. See Example~\ref{ex:Nom} below: in the
    category of nominal sets (aka the Schanuel topos), the set of atoms
    is infinite. Next we provide a class of examples of strict lfp toposes, see
    also Example~\ref{E:gpd-lambda} below.
  \end{enumerate}
\end{examples}
\begin{lemma}\label{L:gpd}
The category of presheaves on a finite groupoid is strictly lfp.
\end{lemma}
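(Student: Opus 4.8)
The plan is to deduce the lemma from Proposition~\ref{P:semi-simple}, applied to $\A=\Set^{\mathcal{G}^{\mathrm{op}}}$ where $\mathcal{G}$ denotes the given finite groupoid. So I must check four things: that this category is cocomplete, that it is semi-simple, that it has (up to isomorphism) only finitely many simple objects, and that each of these is finitely presentable. Cocompleteness is immediate, since colimits of presheaves are computed objectwise in $\Set$. Semi-simplicity is exactly the statement that $\Set^{\mathcal{G}^{\mathrm{op}}}$ is an atomic topos, which holds because $\mathcal{G}$ is a groupoid; this is recalled in Example~\ref{E:strict}\ref{e:topos}, with the reference \cite{BarrDiaconescu}. (If one prefers a self-contained argument: the category of elements $\int P$ of a presheaf $P$ on a groupoid is again a groupoid, hence a coproduct of its connected components $\mathcal{C}_j$; the induced subpresheaves $P_j\monoto P$ give $P=\coprod_j P_j$, and each $P_j$ is simple, since a subpresheaf $Q\monoto P_j$ with $Q(x)\neq\emptyset$ must be all of $P_j$ — in a groupoid every object of $\int P_j$ is carried onto any other by some morphism of $\mathcal{G}$, so $Q(y)=P_j(y)$ for all $y$.)

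The real content is the classification and finiteness of the simple objects. Here I would first use that a finite groupoid has only finitely many connected components, each equivalent to a one-object groupoid $BG$ with $G$ a finite group (the automorphism group of a chosen object of that component). A simple presheaf is connected, hence supported on a single component, where it corresponds to a transitive right $G$-set, i.e.~a coset set $G/H$ for a subgroup $H\leq G$. Up to isomorphism these are indexed by conjugacy classes of subgroups of $G$, of which there are finitely many since $G$ is finite; together with the finiteness of the number of components this yields the required finite set of simple objects. Finally, each simple presheaf $S$ takes, on each of the finitely many objects of $\mathcal{G}$, a value of cardinality at most $|G|$, so $\int S$ is a finite category; writing $S$ as the colimit of the corresponding finite diagram of representables and using that representables are finitely presentable (each $\Set^{\mathcal{G}^{\mathrm{op}}}(\mathcal{G}(-,c),-)$ is an evaluation functor and hence preserves all colimits) together with the closure of finitely presentable objects under finite colimits, one concludes that $S$ is finitely presentable.

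With all hypotheses of Proposition~\ref{P:semi-simple} verified, it follows that $\Set^{\mathcal{G}^{\mathrm{op}}}$ is strictly lfp. I expect the only genuinely nontrivial point to be the semi-simplicity of $\Set^{\mathcal{G}^{\mathrm{op}}}$, and even that is either quoted from \cite{BarrDiaconescu} or dispatched by the short remark above on the category of elements of a presheaf on a groupoid decomposing into transitive pieces; the remaining steps are a routine count and standard facts about finitely presentable objects in presheaf categories.
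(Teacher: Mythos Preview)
Your proof is correct and follows the same overall strategy as the paper: both reduce to Proposition~\ref{P:semi-simple} (via Example~\ref{E:strict}\ref{e:topos}) and then verify that the atoms are finitely many and finitely presentable. The packaging of the atom count differs: the paper views presheaves on $\mathcal{G}$ as a variety of many-sorted unary algebras (the sorts being the objects of $\mathcal{G}$), observes that every atom is a singly-generated subalgebra and hence a quotient of some representable $\mathcal{G}(-,X)$, and then simply notes that a finite set carries only finitely many equivalence relations; you instead decompose $\mathcal{G}$ into its connected components, pass to the equivalent one-object groupoids $BG$, and invoke the classification of transitive $G$-sets by conjugacy classes of subgroups. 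Both routes are short and valid; yours is a bit more structural and yields an explicit parametrization of the atoms, while the paper's avoids the detour through Morita equivalence and just counts quotients directly.
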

\begin{proof} In view of Example \ref{E:strict}(4) all we need proving is that
  for every finite groupoid $\G$
  the category $\Set^{\G^{\text{op}}}$ has, up to isomorphism, a finite set of finitely presentable
  atoms.

(1)~Put $S=\obj \G$. Then the category
  $\Set^{\G^{\text{op}}}$ can be considered as a
  variety of $S$-sorted unary algebras. The signature is given by
  the set of all morphisms of $\G^{\text{op}}$: every morphism $f\colon  X
  \to Y$ of $\G^{\text{op}}$ corresponds to an operation symbol of
  arity $X \to Y$ (i.e.~variables are of sort $X$ and results of sort
  $Y$). This variety is presented by the equations corresponding to the composition in
  $\G^{\text{op}}$: represent $g\cdot f=h\colon X\to Y$ in
  $\G^{\text{op}}$ by $g(f(x))=h(x)$ for a variable $x$ of sort
  $X$. Moreover, for every
  object $X$, add the equation $\id_X(x) = x$ with $x$ of sort $X$. 
  
  For every algebra $A$ and every element $x\in A$ of sort $X$ the
  subalgebra which $x$ generates is denoted by $A^x$. Denote by $\sim_A$ the
  equivalence on the set of all elements of $A$ defined by $x\sim_Ay$
  iff $A^x=A^y$. If $I(A)$ is a choice class of this equivalence, then
  we obtain a representation of $A$ as the following coproduct:
  \[
    A=\coprod_{x\in I(A)}A^x.
  \]
  This follows from $\G$ being a groupoid: whenever
  $A^x\cap A^y\not=\emptyset$, then $x\sim_A y$.
  
  Moreover, for every homomorphism $h\colon A\to B$ there exists a function
  $h_0\colon I(A) \to I(B)$ such that on each $A^x$, $x\in I(A)$, $h$
  restricts to a homomorphism $h_0\colon A^x\to B^{h(x)}$. Indeed,
  define $h_0(x)$ as the representative of $\sim_B$ with
  $B^{h(x)}=B^{h_0(x)}$.

  \medskip\noindent (2)~Given $x\in A$ of sort $X$, the algebra $A^x$ is a
  quotient of the representable algebra $\G(-,X)$. Indeed, the Yoneda
  transformation corresponding to $x$, an element of $A^x_X$ of sort $X$, has
  surjective components (by the definition of $A^x$).
  
  Observe that every representable algebra has only finitely many
  quotients. This follows from the fact that $\G(-,X)$ has finitely
  many elements, hence, finitely many equivalence relations exist
  on the set of all elements.

  \medskip\noindent (3)~We conclude that the finite set $\B$
  of all algebras representing quotients of representable algebras
  $\G(-,X)$ consists of finitely presentable algebras. Moreover, every
  algebra is a coproduct of algebras from $\B$. 
%
\end{proof}
%
%

\begin{remark}
  Recall from \cite[Proposition 2.30]{AdamekR94} that \emph{pure subobjects}
  $b\colon B\monoto A$ in an lfp category $\A$ are precisely the filtered
  colimits of split subobjects of $A$ in the slice category $\A/A$.
\end{remark}

\begin{proposition}
  Let $\A$ be an lfp category in which all subobjects are pure. If $\A_{\fp} =
  \A_{\fg}$, then $\A$ is strictly lfp.
\end{proposition}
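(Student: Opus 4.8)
The plan is to verify the condition of Definition~\ref{D:strict}(2) directly. Fix an object $A$ and a finitely generated subobject $m\colon M\monoto A$; since $\A_\fp=\A_\fg$, the object $M$ is finitely presentable. I must produce a finitary endomorphism $u$ of $A$ with $u\cdot m=m$. The whole argument reduces to one claim, which I would establish first: \emph{every finitely generated (equivalently, finitely presentable) subobject of any object is a split subobject}.

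To prove the claim I would feed the hypothesis ``all subobjects are pure'' into the description of pure subobjects recalled just above: $m\colon M\monoto A$ is the colimit, taken in the slice category $\A/A$, of a filtered diagram whose objects are split subobjects $m_i\colon M_i\monoto A$ and whose connecting maps $g_{ii'}\colon M_i\to M_{i'}$ satisfy $m_{i'}\cdot g_{ii'}=m_i$. The projection $\A/A\to\A$ creates colimits, so $M=\colim_i M_i$ in $\A$ with colimit injections $t_i\colon M_i\to M$ satisfying $m\cdot t_i=m_i$. Now finite presentability of $M$ enters: $\id_M$ factorizes through some injection $t_j$, say $t_j\cdot s=\id_M$ with $s\colon M\to M_j$; in particular $m=m_j\cdot s$. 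Choosing a retraction $r_j\cdot m_j=\id_{M_j}$ of the split monomorphism $m_j$, the composite $r:=t_j\cdot r_j\colon A\to M$ satisfies $r\cdot m=t_j\cdot r_j\cdot m_j\cdot s=t_j\cdot s=\id_M$. Hence $m$ is split.

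Granting the claim, the theorem is immediate: set $u:=m\cdot r\colon A\to A$. Then $u\cdot m=m\cdot r\cdot m=m$, and $u$ factorizes through $M$, which is finitely presentable (again by $\A_\fp=\A_\fg$), so $u$ is finitary. This is exactly what Definition~\ref{D:strict}(2) asks for, so $\A$ is strictly lfp. (One could equally well check the reformulation in Remark~\ref{R:strict}\ref{R:strict:str}: given $b\colon B\to A$ with $B\in\A_\fp$, factor it as $B\xrightarrow{e}\Im b\xrightarrow{m}A$, note $\Im b\in\A_\fg=\A_\fp$, split $m$ as above, and take $b'=m$, $f=r$; but verifying the definition directly is shortest.)

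The step I expect to require the most care -- though it is not a deep difficulty -- is keeping track of where finite presentability is used: once to guarantee that $\id_M$ factors through a single colimit injection (so that the splittings of the $m_i$'s transport to a splitting of $m$), and once more at the end to upgrade ``$u$ factors through $M$'' to ``$u$ is finitary''. Since strictness is already known to imply $\A_\fp=\A_\fg$ (Remark~\ref{R:strict}\ref{R:strict:fpfg}), this hypothesis is in any case unavoidable.
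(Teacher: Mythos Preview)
Your proof is correct and follows essentially the same route as the paper's. Both arguments express the given finitely generated subobject as a filtered colimit (in $\A/A$) of split subobjects, use finite presentability to factor the identity through one colimit injection, and then build the finitary idempotent endomorphism from the resulting splitting; the only cosmetic difference is that the paper observes the chosen injection $c_i$ is actually invertible (monic since $b_i=b\cdot c_i$ with $b_i$ split monic, and split epic by the factorization of $\id$) and records the endomorphism as $b_i\cdot e_i$ through $B_i$, whereas you write the same map as $m\cdot r$ through $M$---but since $t_j$ is an isomorphism these are literally the same morphism.
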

\begin{proof}
  Let $b\colon B\monoto A$ be a finitely generated subobject. Express it as a
  filtered colimit of split subobjects $b_i\colon B_i\monoto A$ (with $e_i\cdot
  b_i = \id_{B_i}$ for $e_i\colon A\to B_i$), $i\in I$, with the following colimit
  cocone in $\A/A$:
  \[
    \xymatrix{B_i\ar@<-2pt>@{>->}[rd]_{b_i} \ar[rr]^{c_i}&&B\ar@{>->}[ld]^{b}\\
      &A\ar@<-2pt>@{-->}[ul]_{e_i}
      &
      }
  \]
  Then in $\A$ we have expressed $B$ as a filtered colimit of the objects $B_i$
  with the cocone $(c_i)_{i\in I}$. It follows from our assumptions that $B$ is
  also finitely presentable, and therefore $\A(B,-)$ preserves that colimit.
  Hence, some $c_i$ is invertible (being both monic, due to $b_i=b\cdot c_i$,
  and split epic). Consequently, $B_i$ is finitely presentable. The finitary
  endomorphism $f=b_i\cdot e_i$ fixes the subobject $b$, as desired:
  \[
    f\cdot b = (b_i\cdot e_i) \cdot (b_i\cdot c_i^{-1})
    = b_i\cdot c_i^{-1}= b.
    \tag*{\endproofbox} 
  \]
  \def\endproof{}
\end{proof}
\begin{example}
  The following categories are strictly lfp because they satisfy all the
  assumptions
  of the above proposition. By a variety we mean an equational class of finitary
  (one-sorted) algebras.
  \begin{enumerate}
  \item\label{i:varietyFpFg} A variety $\A$ of algebras with $\A_{\fp} =
    \A_{\fg}$ in which every finitely generated subobject of a finitely
    generated object splits. By \cite[Theorem~2.1]{borceuxRosicky} all monomorphisms are
    pure.

    An example of such a variety are boolean algebras. Here $\A_{\fg} =
    \A_{\fp}$ are precisely the finite algebras. Since every epimorphism in
    $\Set_{\fp}$ splits, by Stone's Duality every monomorphism between finite
    boolean algebras splits.

  \item\label{i:RModNoether} \rmod{} for all regular, left-Noetherian rings $R$.
    Recall
    that $R$ is left-Noetherian if every left ideal $I\subseteq R$ is
    finitely generated; this implies that finitely generated left modules are
    finitely presentable \cite[Example 3.8.28]{Rowen88}.
    Recall further that regularity (in von Neumann's sense) means that for every $a\in
    R$ there exists $\bar a\in R$ with $a=a\cdot \bar a \cdot a$. For
    left-Noetherian rings, this condition is
    equivalent to \rmod{} having all monomorphisms pure, see \cite[Proposition 2.11.20]{Rowen88}.

    Regular rings are a wider class than semi-simple rings, so in the realm of
    left-Noetherian rings we have a simplification of the argument of
    Example~\ref{E:strict}\ref{e:Mod}.

  \item A special case of \ref{i:varietyFpFg}, which is the `non-abelian
    generalization' of \ref{i:RModNoether}, are varieties $\A$ with
    $\A_\fp=\A_\fg$ such that for every morphism $a\colon X\to Y$ of $\A_{\fp}$
    there exists $\bar a\colon Y\to X$ with $a=a\cdot \bar a\cdot a$, See
    \cite[Proposition 3.4]{borceuxRosicky}.

  \item $G$-modules over a field $K$, i.e.~the functor category
    \[
      (\kvec)^G,
    \]
    for a finite group $G$ and a field of characteristic $0$. (More generally:
    every field whose characteristic does not divide $|G|$.)

    By the classical Maschke's Theorem \cite[Theorem XIII.1.1]{Lang65} for every
    subobject $b\colon B\monoto A$ there exists a coproduct $A=B+C$ with $b$ as
    the left injection. Thus $b$ splits: consider $[\id_B,0]\colon A\to B$.
    Hence all monomorphisms are pure.

    The forgetful functor to $\kvec$ preserves colimits (computed object-wise).
    The free $G$-module $\actualphi n$ on $n$ generators thus has finite dimension
    (of the underling vector space). Indeed, $\actualphi 1$ has dimension $|G|$
    because its underlying space is spanned by $G$, see XIII, Section 1 of
    \cite{Lang65}.
    Hence $\actualphi n = \actualphi 1 + \cdots + \actualphi 1$ has dimension
    $n\cdot |G|$.

    It follows that every finitely generated $G$-module is finitely presentable.
    Indeed, it is a quotient of $\actualphi n$ for some $n$, thus, it is
    finite-dimensional. And every finite-dimensional $G$-module $A$ is finitely
    presentable in $(\kvec)^G$. This follows easily from $A$ being finitely
    presentable in $\kvec$, since the group action $G\times A\to A$ is
    determined by its domain restriction to the finite set $G\times X$, where
    $X$ is a base of $A$.
  \end{enumerate}
\end{example}

\begin{examples}\label{E:nonsemistrict} Here we present
  lfp categories $\A$ which are not semi-strictly lfp.
  For that it would be sufficient to exhibit an object $A$ such that no
  endomorphism is finitary. However, we also provide something stronger:
  In each case we present a non-finitary \emph{endofunctor} that is finitely
  bounded.
  \begin{enumerate}
  \item The category $\Un$. In Example~\ref{E:unary} we have already shown
    the promised endofunctor. Thus
    $\Un$ is not semi-strictly lfp. For the algebra $A = \coprod_{p} C_p$, where $p$
    ranges over all prime numbers, there exists no finitary endomorphism. 
    
  \item The category $\Int$-$\Set$ (of actions of the integers on sets). Since
    this category is equivalent to that of unary algebras with one invertible
    operation, the argument is as in~(1).

  \item The category $\Gra$ of graphs and their homomorphisms is not
    semi-strictly lfp (see
    Example~\ref{E:set}\ref{E:strictNotSemi}).
    
    Analogously to Example~\ref{E:unary} define an endofunctor
    $F$ on $\Gra$ by
    \[
      FX = 
      \begin{cases}
        \mathds{1} + X & \text{if $X$ contains no cycle and no infinite path}\\
        \mathds{1} & \text{else},
      \end{cases}
    \]
    where $\mathds{1}$ is the terminal object, and $Ff = \id_\mathds{1} + f$ if the
    codomain $X$ of $f$ fulfils $FX = \mathds{1} + X$. This functor is clearly
    finitely bounded, but for the graph $A$ consisting of a single
    infinite path, it does not preserve the
    colimit $A = \colim D_A$ of Remark~\ref{R:prelim}(2).
  \item $\Set^\Nat$. If $\mathds{1}$ is the terminal object, then
    $\Set^\Nat(\mathds{1}, B') = \emptyset$ for all finitely presentable
    objects $B$. We define $F$ on $\Set^\Nat$ by $FX = \mathds{1} + X$ if $X$
    has only finitely many non-empty components, and $FX = \mathds{1}$ else. 
  \end{enumerate}
\end{examples}
\begin{openproblem}
  Is the category $\Pos$ of posets strictly lfp? Is every finitely bounded
  endofunctor on $\Pos$ finitary?
\end{openproblem}
We next present two examples of rather important categories for which we prove
that they are not semi-strictly lfp either.
\begin{example}
  \label{ex:Nom}
  Nominal sets are not semi-strictly lfp. Let us first recall the definition of
  the category $\Nom$ of nominal sets (see e.g.~\cite{Pitts13}). We fix a
  countably infinite set $\V$ of \emph{atomic names}. Let $\perms(\V)$ denote
  the group of all finite permutations on $\V$ (generated by all
  transpositions). Consider a set $X$ with an action of this group, denoted by
  $\pi \cdot x$ for a finite permutation $\pi$ and $x \in X$. A subset $A
  \subseteq \V$ is called a \emph{support} of an element $x \in X$ provided that
  every permutation $\pi \in \perms(\V)$ that fixes all elements of $A$ also
  fixes $x$:
  \[
    \text{$\pi(a) = a$ for all $a \in A \implies \pi \cdot x = x$}. 
  \]
  A \emph{nominal set} is a set with an action of the group
  $\perms(\V)$ where every element has a finite support. The category
  $\Nom$ is formed by nominal sets and \emph{equivariant maps},
  i.e.~maps preserving the given group action. $\Nom$ is a
  Grothendieck topos, it is an lfp category (see
  e.g.~Pitts~\cite[Remark~5.17]{Pitts13}), and, as shown by
  Petri\c{s}an~\cite[Proposition~2.3.7]{petrisanphd}, the finitely
  presentable nominal sets are precisely those with finitely many
  orbits (where an orbit of $x$ is the set of all $\pi \cdot x$).

  It is a standard result that every element $x$ of a nominal set has the least
  support, denoted by $\supp(x)$. In fact, $\supp\colon X\to \powf(\V)$ is itself an
  equivariant map, where $\powf(\V)$ is the set of all finite subsets of $\V$ with the action given by  $\pi \cdot Y = \{ \pi(v) \mid v \in
  Y\}$. 
  Consequently, any two elements of the same orbit $x_1$ and $x_2 = \pi\cdot x_1$ have a
  support of the same size. In addition, if $f\colon  X \to Y$ is an equivariant map, it is clear that
  \begin{equation}\label{eq-supp}{\supp (f(x)) \subseteq \supp(x),} \quad\text{for every
  $x \in X$}.
  \end{equation}

  Now we present a non-finitary endofunctor on $\Nom$ which is finitely bounded.
  Consider for every natural number $n$ the nominal set $P_n = \{ Y \subseteq \V
  \mid |Y| = n\}$ with the nominal structure given
  element-wise, as for $\powf(\V)$ above. Clearly, $\supp(Y) = Y$ for every $Y \in P_n$.
  
  For $A = \coprod_{0 < n < \omega} P_n$ the existence of a finitary
  endomorphism leads to a contradiction. In fact, let the
  corresponding pair of morphisms
  $\xymatrix@1{A\ar@<2pt>[r]^f&X\ar@<2pt>[l]^{g}}$ with $X$
  orbit-finite be given. It is clear that, for every
  $x\in X$, $\supp(x)\not=\emptyset$, otherwise, by \eqref{eq-supp},
  we would have $\supp(g(x))=\emptyset$, which contradicts the fact
  that $\supp(Y)=Y\not=\emptyset$ for all $Y\in A$. We show below that for
  every $Y\in A$, $\supp(f(Y))=\supp(Y)=Y$, thus $X$ admits infinitely
  many cardinalities for $\supp(x)$ with $x\in X$, contradicting the
  orbit-finiteness of $X$.

  By~\eqref{eq-supp}, it remains to prove that
  $\supp(Y) \subseteq \supp(f(Y))$. To see this, fix an element $v$ of
  $\supp(f(Y))$, which is already known to be nonempty. Now for any given 
  element $w$ of $\supp(Y) = Y$, the equivariance of $f$ applied to 
  the transposition $\pi$ of $v$ and $w$ implies that
 \[
    w \in \pi \cdot \supp(f(Y)) = \supp(\pi \cdot f(Y)) = \supp(f(\pi \cdot Y)) = \supp(f(Y)).
  \]

This proves that $\Nom$ is not semi-strictly lfp.
   
  Analogously to Example~\ref{E:unary} we define an endofunctor $F$ on
  $\Nom$ by
  \[
    FX = 
    \begin{cases}
      \mathds{1} + X & \text{if $\Nom(P_n,X) = \emptyset$ for some $n <
        \omega$}
      \\
      \mathds{1} & \text{else.}
    \end{cases}
  \]
  For an equivariant map $f\colon  X \to Y$, if $FY = \mathds{1} + Y$, then also $FX =
  \mathds{1} + X$: given $\Nom(P_n, Y) = \emptyset$ for some $n$, then also
  $\Nom(P_n, X) = \emptyset$ holds. In that case put
  $Ff=\id_{ \mathds{1}}+f$ and else $Ff$ is the unique equivariant map to $FY =
  \mathds{1}$. A very similar argument as in Example~\ref{E:unary} shows that
  $F$ is finitely bounded. However, $F$ is not finitary, as it does not preserve
  the colimit $\coprod_{n<\omega}P_n$ of the chain $P_1 \subto P_1 + P_2 \subto
  P_1 + P_2 + P_3 \subto \cdots$.
\end{example}

We prove next that in the category $[\Set,\Set]_\fin$
of finitary set functors (known to be lfp \cite[Theorem 1.46]{AdamekR94}) finitely generated objects
coincide with the finitely presentable ones, yet this category fails to be
semi-strictly lfp.

\begin{remark} \label{R:quot}
  Recall that a quotient of an object $F$ of $[\Set,\Set]_\fin$ is represented
  by a natural transformation $\varepsilon\colon F\to G$ with epic components.
  Equivalently, $G$ is isomorphic to $F$ modulo a \emph{congruence} $\sim$. This
  is a collection of equivalence relations $\sim_X$ on $FX$ ($X\in \Set$) such
  that for every function $f\colon X\to Y$ given $p_1\sim_X p_2$  in $FX$, it
  follows that $Ff(p_1) \sim_Y Ff(p_2)$.
\end{remark}
We are going to characterize finitely presentable objects of $[\Set,\Set]_\fin$
as the super-finitary functors introduced in \cite{Adamek:1990:AAC:575450}:
\begin{definition} \label{D:fin}
  A set functor $F$ is called \emph{super-finitary} if there exists a natural
  number $n$ such that $Fn$ is finite and for every set $X$, the maps
  $Ff$ for $f\colon n\to X$ are jointly surjective, i.e.~they fulfil
  $FX=\bigcup_{f\colon n\to X} Ff[Fn]$.
\end{definition}
\begin{examples}\label{E:supfin}
  \begin{enumerate}
  \item \label{supfin:automata}
    The functors $A\times \Id^n$ are super-finitary for all finite sets $A$ and
    all $n \in \Nat$.
  \item \label{supfin:sig} 
  More generally, let $\Sigma$ be a finitary signature, i.e.~a set of
    operation symbols $\sigma$ of finite arities $|\sigma|$. The corresponding
    \emph{polynomial set functor}
    \[
      H_\Sigma X= \coprod_{\sigma\in \Sigma} X^{|\sigma|}
    \]
    is super-finitary iff the signature has only finitely many symbols.
    We call such signatures \emph{super-finitary}.
  \item \label{superfin:sub}
    Every subfunctor $F$ of $\Set(n,-)$, $n\in \Nat$, is super-finitary.
      Indeed, assuming $FX\subseteq \Set(n,X)$ for all $X$, we are to find, for
      each $p\colon n\to X$ in $FX$, a member $q\colon n\to n$ of $Fn$ with
      $p=Ff(q)$ for some $f\colon n\to X$. That is, with $p=f\cdot q$.
      Choose a function $g\colon X\to n$ monic on $p[n]$. Then there exists
      $f\colon n\to X$ with $p=f\cdot g\cdot p$. From $p\in FX$ we deduce
      $Fg(p)\in Fn$, that is, $g\cdot p\in Fn$. Thus $q=g\cdot p$ is the desired
      element: we have $p=f\cdot q=Ff(q)$.
    \item \label{supfin:quotients} Every quotient $\varepsilon\colon F\twoheadrightarrow G$ of a
      super-finitary functor $F$ is super-finitary. Indeed, given $p\in GX$,
      find $p'\in FX$ with $p=\varepsilon_X(p')$. There exists $q'\in
      Fn$ with $p'=Ff(q')$ for some $f\colon n\to X$. We conclude
      that $q=\varepsilon_n(q')$ fulfils $p=Gf(q)$ from the naturality of $\varepsilon$.
  \end{enumerate}
\end{examples}

\begin{lemma} \label{L:fin}
  The following conditions are equivalent for every set functor $F$:
  \begin{enumerate}
    \item $F$ is super-finitary \label{L:fin:supfin}
    \item $F$ is a quotient of the polynomial functor $H_\Sigma$ for a
      super-finitary signature $\Sigma$, and \label{L:fin:poly}
    \item $F$ is a quotient of a functor $A\times \Id^n$ for $A$ finite and $n\in \Nat$. \label{L:fin:quot}
  \end{enumerate}
\end{lemma}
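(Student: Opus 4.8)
The plan is to prove the cycle of implications $\ref{L:fin:quot}\Rightarrow\ref{L:fin:poly}\Rightarrow\ref{L:fin:supfin}\Rightarrow\ref{L:fin:quot}$, of which only the last step has any real content. For $\ref{L:fin:quot}\Rightarrow\ref{L:fin:poly}$ it suffices to observe that, for a finite set $A$ and $n\in\Nat$, the functor $A\times\Id^n$ is, up to natural isomorphism, the polynomial functor $H_\Sigma$ for the signature $\Sigma$ having $|A|$ operation symbols, all of arity $n$; this $\Sigma$ is super-finitary because it is finite. Composing a quotient of $A\times\Id^n$ with this isomorphism (which has epic components) shows that $F$ is a quotient of $H_\Sigma$ for a super-finitary signature. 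For $\ref{L:fin:poly}\Rightarrow\ref{L:fin:supfin}$ I would just combine Example~\ref{E:supfin}\ref{supfin:sig}, which says $H_\Sigma$ is super-finitary whenever $\Sigma$ is, with Example~\ref{E:supfin}\ref{supfin:quotients}, which says super-finitariness passes to quotients.

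The substantive step is $\ref{L:fin:supfin}\Rightarrow\ref{L:fin:quot}$. Let $n\in\Nat$ witness that $F$ is super-finitary, so that $A:=Fn$ is finite and, for every set $X$, the maps $Fp$ with $p\colon n\to X$ are jointly surjective onto $FX$. I would define a natural transformation $\varepsilon\colon A\times\Id^n\to F$ by
\[
  \varepsilon_X\colon A\times X^n\to FX,\qquad (a,p)\mapsto Fp(a),
\]
reading an element $p\in X^n$ as a function $p\colon n\to X$. Naturality is immediate: for $g\colon X\to Y$ one has $Fg\bigl(Fp(a)\bigr)=F(g\cdot p)(a)=\varepsilon_Y\bigl(a,\,g\cdot p\bigr)=\varepsilon_Y\bigl((A\times g^n)(a,p)\bigr)$. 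Each component $\varepsilon_X$ is surjective because its image is exactly $\bigcup_{p\colon n\to X}Fp[Fn]=FX$, which is precisely the super-finitary condition. Hence $\varepsilon$ exhibits $F$ as a quotient of $A\times\Id^n$ with $A$ finite, as required.

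The only point that needs care — and hence the \emph{main obstacle}, such as it is — is the behaviour on the empty set and the degenerate case $n=0$, since set functors can be anomalous there. When $n>0$ and $X=\emptyset$ there are no maps $n\to\emptyset$, so the super-finitary condition forces $F\emptyset=\emptyset$ and $\varepsilon_\emptyset$ is the unique, necessarily surjective, map $\emptyset\to\emptyset$; when $n=0$ the set $X^0$ is a singleton, so $A\times\Id^0$ is the constant functor on $A$ and joint surjectivity reads literally as surjectivity of $\varepsilon_X$. Apart from this bookkeeping everything is a routine diagram chase; the real content is just the observation that the finite set $Fn$ together with the evaluation transformation $(a,p)\mapsto Fp(a)$ supplies the desired presentation.
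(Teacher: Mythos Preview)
Your proof is correct and takes the same approach as the paper: both construct the natural transformation $\varepsilon\colon Fn\times\Id^n\to F$, $(a,p)\mapsto Fp(a)$, and observe that super-finitariness says precisely that each component is surjective. The paper packages the construction of $\varepsilon$ via the Yoneda lemma for $\Id^n\cong\Set(n,-)$ together with cartesian closedness of $[\Set,\Set]_\fin$, whereas you write $\varepsilon$ down explicitly and verify naturality by hand (and are more careful about the degenerate cases), but the content is identical.
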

\begin{proof}
  \ref{L:fin:quot}$\implies$\ref{L:fin:poly} is clear and for \ref{L:fin:poly}$\implies$\ref{L:fin:supfin}
  see the Examples  \ref{supfin:sig} and \ref{supfin:quotients} above. To
  prove \ref{L:fin:supfin}$\implies$\ref{L:fin:quot}, let $F$ be super-finitary and
  put $A=Fn$ in the above definition. Apply Yoneda Lemma to $\Id^n\cong
  \Set(n,-)$ and use that $[\Set,\Set]_\fin$ is cartesian closed:
 \[
   \frac{
     Fn \xrightarrow{~~\cong~~} [\Set,\Set]_\fin(\Set(n,-), F)
   }{
     \varepsilon\colon Fn \times \Set(n,-)\xrightarrow{~~\phantom{\cong}~~} F
   }
 \]
 The definition of super-finitary shows that $\varepsilon_X$ is surjective for
 every $X$.
\end{proof}

\begin{proposition} \label{P:fin}
  Super-finitary functors are closed in $[\Set,\Set]_\fin$ under finite products,
  finite coproducts, subfunctors, and hence under finite limits.
\end{proposition}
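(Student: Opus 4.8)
The plan is to lean on the presentation theorems for super-finitary functors from Lemma~\ref{L:fin} — that $F$ is super-finitary iff it is a quotient of a polynomial functor $H_\Sigma$ with $\Sigma$ a super-finitary signature, iff it is a quotient of some $A\times\Id^n$ with $A$ finite — together with the two closure facts already established: quotients of super-finitary functors are super-finitary (Example~\ref{E:supfin}\ref{supfin:quotients}), and every subfunctor of $\Set(n,-)$ is super-finitary (Example~\ref{E:supfin}\ref{superfin:sub}). Each of the four operations will be reduced to these, using throughout that finite limits and colimits in $[\Set,\Set]_\fin$ are computed pointwise.

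\emph{Coproducts.} Given super-finitary $F,G$, I would pick quotients $H_\Sigma\twoheadrightarrow F$ and $H_\Gamma\twoheadrightarrow G$ with $\Sigma,\Gamma$ super-finitary (Lemma~\ref{L:fin}\ref{L:fin:poly}). Pointwise one has $H_\Sigma+H_\Gamma\cong H_{\Sigma\sqcup\Gamma}$ with $\Sigma\sqcup\Gamma$ again super-finitary, hence $H_{\Sigma\sqcup\Gamma}$ is super-finitary (Example~\ref{E:supfin}\ref{supfin:sig}), and so is its quotient $F+G$; taking $\Sigma=\emptyset$ covers the initial functor. \emph{Products.} Writing $F,G$ as quotients $A\times\Id^m\twoheadrightarrow F$ and $B\times\Id^n\twoheadrightarrow G$ with $A,B$ finite (Lemma~\ref{L:fin}\ref{L:fin:quot}), and using that products in $[\Set,\Set]_\fin$ are pointwise while products of surjections in $\Set$ are surjective, $F\times G$ is a quotient of $(A\times\Id^m)\times(B\times\Id^n)\cong (A\times B)\times\Id^{m+n}$ (via $\Set(m,-)\times\Set(n,-)\cong\Set(m+n,-)$), so it is super-finitary by Lemma~\ref{L:fin}\ref{L:fin:quot}; the terminal functor, being $1\times\Id^0$, is likewise super-finitary.

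\emph{Subfunctors.} This is the heart of the argument. Let $\iota\colon G\monoto F$ with $F$ super-finitary, fix a quotient $\varepsilon\colon A\times\Id^n\twoheadrightarrow F$ with $A$ finite, and form the pullback $P$ of $\varepsilon$ and $\iota$. Since $F$, $G$ and $A\times\Id^n$ are finitary and finite limits commute with filtered colimits in $\Set$, this pullback is computed pointwise, so $P$ is the subfunctor of $A\times\Id^n$ with $PX=\varepsilon_X^{-1}[GX]$, and the projection $P\to G$ is a componentwise surjection, i.e.\ a quotient. As $A\times\Id^n=\coprod_{a\in A}\Id^n$ is a \emph{finite} coproduct and coproducts in $\Set$ are disjoint, $P$ splits as $P=\coprod_{a\in A}P_a$ with each $P_a$ a subfunctor of $\Id^n=\Set(n,-)$; each $P_a$ is super-finitary by Example~\ref{E:supfin}\ref{superfin:sub}, so $P$ is super-finitary by the coproduct case just proved, and hence so is its quotient $G$ by Example~\ref{E:supfin}\ref{supfin:quotients}.

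\emph{Finite limits, and the main obstacle.} Finite limits are obtained from finite products and equalizers, and an equalizer of a parallel pair $F\to G$ in $[\Set,\Set]_\fin$ is computed pointwise and is a subfunctor of $F$; so closure under finite products and under subfunctors, together with the terminal functor being super-finitary, yields closure under all finite limits. I expect the subfunctor step to be the only one requiring genuine structural input: the trick is to pull the subobject back along a polynomial presentation $A\times\Id^n\twoheadrightarrow F$ so that it turns into a subfunctor of a finite coproduct of copies of $\Set(n,-)$, where Example~\ref{E:supfin}\ref{superfin:sub} applies. The fact that legitimizes this is precisely that in $[\Set,\Set]_\fin$ both the relevant pullback and the induced quotient $P\to G$ are computed pointwise — the pullback because finite limits commute with filtered colimits in $\Set$.
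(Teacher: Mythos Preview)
Your proof is correct and follows essentially the same approach as the paper: products via $(A\times B)\times\Id^{m+n}$, subfunctors via pulling back along a presentation $A\times\Id^n\twoheadrightarrow F$ and decomposing the resulting subfunctor into a finite coproduct of subfunctors of $\Set(n,-)$, and finite limits via equalizers being subfunctors. The only cosmetic difference is that for coproducts you use the polynomial presentation $H_{\Sigma\sqcup\Gamma}$ whereas the paper pads both summands to a common exponent $n_1$ via a quotient $A_2\times\Id^{n_1}\twoheadrightarrow A_2\times\Id^{n_2}$ to obtain $(A_1+A_2)\times\Id^{n_1}\twoheadrightarrow F_1+F_2$; both are equally direct.
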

\begin{proof}
  \begin{enumerate}
    \item Finite products and coproducts are clear: given quotients
      $\varepsilon_{i}\colon A_i\times \Id^{n_i}\twoheadrightarrow F_i$, $i\in \{1,2\}$, then
      $F_1\times F_2$ is super-finitary due to the quotient
      \[
        \varepsilon_1 \times \varepsilon_2\colon (A_1\times A_2)\times \Id^{n_1+n_2}
        \to F_1\times F_2.
      \]
      Suppose $n_1\ge n_2$, then we can choose a quotient $\varphi\colon
      A_2\times \Id^{n_1} \twoheadrightarrow A_2\times \Id^{n_2}$.
      This proves that $F_1+F_2$ is super-finitary due to the quotient
      \[
        \varepsilon_1+(\varepsilon_2\cdot \varphi)\colon
        (A_1+A_2)\times \Id^{n_1}
        \cong A_1\times \Id^{n_1}+A_2\times \Id^{n_1}
        \to F_1+F_2.
      \]
    \item Let $\mu\colon G\rightarrowtail F$ be a subfunctor of a super-finitary
      functor $F$ with a quotient $\varepsilon\colon A\times
      \Id^n\twoheadrightarrow F$. Form a pullback (object-wise in $\Set$) of
      $\varepsilon$ and $\mu$:
      \[
        \begin{tikzcd}
          H
          \arrow[>->]{r}{\bar \mu}
          \arrow[->>]{d}[swap]{\bar \varepsilon}
          \pullbackangle{-45}
          &
          A \times \Id^n
          \arrow[->>]{d}{\varepsilon}
          \\
          G
          \arrow[>->]{r}{\mu} & F
        \end{tikzcd}
      \]
      For each $a\in A$, the preimage $H_a$ of $\{a\}\times \Id^n\cong
      \Set(n,-)$ under $\bar\mu$ is super-finitary by Example \ref{superfin:sub}
      above. Since $A\times \Id^n = \coprod_{a\in A} \{a\}\times \Id^n$ and
      preimages under $\bar \mu$ preserve coproducts, we have $H=\coprod_{a\in
        A} H_a$ and so $G$ is a quotient of the super-finitary functor~$H$.
  \end{enumerate}
\end{proof}
\begin{lemma} \label{L:kernel2fpfg} Let $\C$ be an lfp
  category with finitely generated objects closed under kernel pairs and in
  which strong epimorphisms are regular. Then finitely presentable
  and finitely generated objects coincide.
\end{lemma}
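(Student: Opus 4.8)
The inclusion $\C_{\fp} \subseteq \C_{\fg}$ holds in every category, so the real task is to show that an arbitrary finitely generated object $A$ is finitely presentable. First I would use Remark~\ref{R:prelim}\ref{I:fingen} to choose a finitely presentable object $A_0$ together with a strong epimorphism $e\colon A_0 \epito A$. Since $\C$ is complete (being lfp), $e$ has a kernel pair $k_1,k_2\colon K \rightrightarrows A_0$, and by the hypothesis that strong epimorphisms are regular, $e$ is a regular epimorphism; hence $e$ is the coequalizer of $k_1$ and $k_2$, using the standard fact that a regular epimorphism in a category with kernel pairs is the coequalizer of its own kernel pair.

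The next step is to cover $K$ by a finitely presentable object. Here the hypothesis on kernel pairs enters: $A_0$ is finitely presentable, hence finitely generated, and $A$ is finitely generated, so $K$ is finitely generated because $\C_{\fg}$ is closed under kernel pairs. Applying Remark~\ref{R:prelim}\ref{I:fingen} once more, choose a finitely presentable object $B_0$ and a strong epimorphism $q\colon B_0 \epito K$. Since $q$ is in particular epic, a morphism $h\colon A_0 \to Z$ satisfies $h\cdot k_1 \cdot q = h\cdot k_2\cdot q$ if and only if $h\cdot k_1 = h\cdot k_2$; therefore $e$ is also the coequalizer of the parallel pair $k_1\cdot q,\ k_2\cdot q\colon B_0 \rightrightarrows A_0$. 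In other words, $A$ arises as a coequalizer of a parallel pair between the finitely presentable objects $B_0$ and $A_0$.

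Finally, I would invoke the standard closure property that in any lfp category the full subcategory $\C_{\fp}$ is closed under finite colimits: if $P$ is a finite colimit of finitely presentable objects $P_i$, then $\C(P,-) \cong \lim_i \C(P_i,-)$ is a finite limit of functors preserving filtered colimits, hence preserves filtered colimits itself, since finite limits commute with filtered colimits in $\Set$ (see \cite{AdamekR94}). As a coequalizer is a finite colimit, the object $A$ constructed above lies in $\C_{\fp}$, which finishes the argument. I do not expect a serious obstacle; the only points needing care are confirming that the closure of $\C_{\fg}$ under kernel pairs genuinely applies to $e$ (whose domain $A_0$ is finitely presentable and whose codomain $A$ is the given finitely generated object), and the routine verification that precomposing the kernel pair with the epimorphism $q$ leaves the coequalizer unchanged.
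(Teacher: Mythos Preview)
Your proposal is correct and follows essentially the same route as the paper's proof: both pick a strong epimorphism from a finitely presentable object onto the given finitely generated one, form its kernel pair (finitely generated by hypothesis), cover the kernel pair by a finitely presentable object via Remark~\ref{R:prelim}\ref{I:fingen}, and conclude by exhibiting the target as a coequalizer of finitely presentable objects. The only difference is cosmetic---your write-up spells out the trivial inclusion $\C_\fp\subseteq\C_\fg$ and the closure of $\C_\fp$ under finite colimits, which the paper leaves implicit.
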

\begin{proof} We apply Remark \ref{R:prelim}\ref{I:fingen}:
  Consider a strong epimorphism $c\colon X\twoheadrightarrow Y$ with $X$
  finitely presentable. We are to show that $Y$ is finitely presentable. Let
  $p,q\colon K\rightrightarrows X$ be the kernel pair of $c$, then $K$ is
  finitely generated. Hence there is some finitely presentable object $K'$ and a
  strong epimorphism $e\colon K'\twoheadrightarrow K$:
  \[
    \xymatrix{
      K'
      \ar@{->>}[r]^-{e}
      & K
      \ar@<2pt>[r]^-{p}
      \ar@<-2pt>[r]_-{q}
      & X
      \ar@{->>}[r]^-{c}
      & Y
    }
  \]
  Since the strong epimorphism $c$ is also regular, it is the coequalizer of its
  kernel pair $(p,q)$; furthermore $e$ is epic, thus $c$ is also the coequalizer of
  $p\cdot e$ and $q\cdot e$. This means that $Y$ is a finite colimit of finitely
  presentable objects and thus it is finitely presentable.
\end{proof}
\begin{corollary}
  \label{cor:setFunNotSStrict}
  $[\Set,\Set]_\fin$ is not semi-strictly lfp.
\end{corollary}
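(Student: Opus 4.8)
The plan is to transport the argument of Example~\ref{E:unary}, where the algebra $\coprod_{p}C_p$ has no finitary endomorphism, into $[\Set,\Set]_\fin$. First I would record the bookkeeping facts. By Lemma~\ref{L:fin}, Proposition~\ref{P:fin} and Lemma~\ref{L:kernel2fpfg}, the finitely presentable and the finitely generated objects of $[\Set,\Set]_\fin$ both coincide with the super-finitary functors; in particular a super-finitary functor has finite value on every finite set. From this I would extract the only reformulation actually used: \emph{a natural transformation $\alpha\colon F\to F$ is finitary iff its pointwise image $\Im\alpha$ is super-finitary}. Indeed, if $\alpha=h\cdot g$ factors through a finitely presentable, hence super-finitary, functor $P$, then $\Im\alpha$ is a quotient of a subfunctor of $P$ and so is super-finitary by Proposition~\ref{P:fin} and Example~\ref{E:supfin}\ref{supfin:quotients}; conversely $\alpha$ always factors through $\Im\alpha$ (its strong epi/mono factorization, computed pointwise --- cf.\ Remark~\ref{R:quot}), and $\Im\alpha$ is finitely presentable once it is super-finitary. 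So it suffices to exhibit one $F$ in $[\Set,\Set]_\fin$ for which $\Im\alpha$ fails to be super-finitary for \emph{every} endomorphism $\alpha$.

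For the construction I would take, for each prime $p$, the functor $G_p$ obtained from $\Id^p=\Set(p,-)$ by quotienting by the cyclic permutation of its $p$ coordinates, i.e.\ $G_pX=X^p/{\sim}$ where $(x_1,\dots,x_p)\sim(x_2,\dots,x_p,x_1)$, and put $F=\coprod_{p\ \mathrm{prime}}G_p$. Each $G_p$ is super-finitary, being a quotient of the polynomial functor $\Id^p$ (Example~\ref{E:supfin}\ref{supfin:automata} and~\ref{supfin:quotients}); it is indecomposable, being a quotient of the representable functor $\Set(p,-)$ (a subfunctor of a representable containing the identity is the whole functor, and a quotient of an indecomposable functor is indecomposable); and $G_pX\neq\emptyset$ whenever $X\neq\emptyset$. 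As a coproduct of finitary functors, $F$ lies in $[\Set,\Set]_\fin$.

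The step I expect to be the main obstacle is the computation $[\Set,\Set]_\fin(G_m,G_n)\neq\emptyset \iff m\mid n$ (for $m,n\geq1$). The plan there is to note that $G_m$ is the coequalizer in $[\Set,\Set]_\fin$ of the identity and the automorphism of $\Id^m$ permuting the coordinates cyclically, so a natural transformation $G_m\to G_n$ is the same as a natural transformation $\psi\colon\Id^m\to G_n$ invariant under that automorphism. By the Yoneda Lemma these correspond to the elements of $G_n(m)=m^n/{\sim}$ fixed under post-composition with the $m$-cycle, i.e.\ to maps $\bar a\colon n\to m$ with $\tau\cdot\bar a=\bar a\cdot\rho^{k}$ for some $k$, where $\tau$ is the $m$-cycle on $m$ and $\rho$ the $n$-cycle on $n$. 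Such an $\bar a$ is precisely a morphism of $\Int$-sets from $(n,\rho^{k})$ --- a disjoint union of $\gcd(n,k)$ cycles of length $n/\gcd(n,k)$ --- to the single $m$-cycle $(m,\tau)$, and a morphism from an $\ell$-cycle to an $m$-cycle exists exactly when $m\mid\ell$. Since $n/\gcd(n,k)$ runs through all divisors of $n$ as $k$ varies, such an $\bar a$ exists for some $k$ iff $m\mid n$; in particular there is no natural transformation $G_m\to G_n$ for distinct primes.

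Finally I would assemble the conclusion. Given an endomorphism $\alpha$ of $F$, indecomposability of $G_p$ forces the component $\alpha|_{G_p}\colon G_p\to\coprod_q G_q$ to factor through a single summand $G_{\sigma(p)}$; then $[\Set,\Set]_\fin(G_p,G_{\sigma(p)})$ is nonempty, so $p\mid\sigma(p)$, and since both are primes $\sigma(p)=p$. Hence $\Im\alpha=\coprod_{p}\Im(\alpha|_{G_p})$ with each $\Im(\alpha|_{G_p})$ a nonempty subfunctor of $G_p$, so already at the one-element set $\Im\alpha(1)$ is a disjoint union of infinitely many nonempty sets and is therefore infinite. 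Thus $\Im\alpha$ is not super-finitary, so $\alpha$ is not finitary. Therefore $F$ has no finitary endomorphism, and by Definition~\ref{D:strict} the category $[\Set,\Set]_\fin$ is not semi-strictly lfp. Apart from the hom-set computation of the third paragraph, everything is routine bookkeeping with Proposition~\ref{P:fin} and the closure properties of Example~\ref{E:supfin}.
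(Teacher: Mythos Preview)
Your proof is correct, but it takes a genuinely different route from the paper's.

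The paper's argument is much shorter: it uses the finite non-empty powerset functor $\bar{\mathcal P}\subseteq\mathcal P$ as the witness, quotes an external result (\cite[Proposition~5.4]{adamekSousa}) that $\bar{\mathcal P}$ has \emph{only} the identity endomorphism, and then observes that if $\id_{\bar{\mathcal P}}$ were finitary then $\bar{\mathcal P}$ would be a quotient of a finitely presentable functor and hence super-finitary by Lemma~\ref{L:fin}, contradicting the obvious fact that it is not. So the paper trades your hom-set computation for a citation.

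Your construction, by contrast, is fully self-contained and transparently mirrors Example~\ref{E:unary}: your $G_p=\Id^p/C_p$ are the functorial analogues of the cycles $C_p$, and the Yoneda/coequalizer analysis showing $[\Set,\Set]_\fin(G_m,G_n)\neq\emptyset\iff m\mid n$ plays the role of the elementary fact that $\Un(C_m,C_n)\neq\emptyset\iff m\mid n$. What you gain is independence from the outside reference; what you pay is the extra page of computation and a couple of minor bookkeeping obligations (the fact that quotients of indecomposables in $[\Set,\Set]_\fin$ are indecomposable or initial, and the implication ``finitely presentable $\Rightarrow$ super-finitary,'' which is really the content of Corollary~\ref{cor:supfinfgfp} --- you derive it here from the cited lemmas plus the directed-union argument, which is fine but worth making explicit). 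Note also that for your final contradiction you only need the forward direction of your ``$\alpha$ finitary $\iff$ $\Im\alpha$ super-finitary'' criterion, and for that it suffices that a finitely presentable functor takes finite values on finite sets; you do not strictly need the full equivalence.
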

\begin{proof}
  We use the subfunctors
  \[
    \bar \pow \subseteq \pow_0 \subseteq \pow
  \]
  of the power-set functor $\pow$ given by $\pow_0X=\pow X\setminus\{\emptyset\}$ and $\bar \pow X
  =\{M\in \pow_0 X\mid M\text{ finite}\}$. Then $\bar\pow$ is an object of
  $[\Set,\Set]_\fin$ which is clearly not super-finitary. The only
  endomorphism of $\bar \pow$ is $\id_{\bar \pow}$. Indeed for $\pow_0$ this has
  been proven in \cite[Proposition 5.4]{adamekSousa}; the same proof applies to
  $\bar \pow$.
  And $\id_{\bar \pow}$ is not finitary: otherwise $\bar \pow$ would be a
  quotient of a finitely presentable object, thus, it would be super-finitary
  (due to Lemma~\ref{L:fin}).
\end{proof}

\begin{corollary} \label{cor:supfinfgfp}
  For a finitary set functor, as an object of $[\Set,\Set]_\fin$, the following
  conditions are equivalent:
  \begin{enumerate}
  \item \label{supfin:fp} finitely presentable,
  \item \label{supfin:fg} finitely generated, and
  \item \label{supfin:supfin} super-finitary.
  \end{enumerate}
\end{corollary}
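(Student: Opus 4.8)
The plan is to establish the cycle \ref{supfin:fp} $\Rightarrow$ \ref{supfin:fg} $\Rightarrow$ \ref{supfin:supfin} $\Rightarrow$ \ref{supfin:fp}. The first implication is automatic, since a functor preserving filtered colimits preserves in particular filtered colimits of monomorphisms. For \ref{supfin:supfin} $\Rightarrow$ \ref{supfin:fg}: by Lemma~\ref{L:fin} a super-finitary functor $F$ is a strong quotient of some $A\times\Id^n\cong\coprod_A\Set(n,-)$ with $A$ finite and $n\in\Nat$; this functor is finitely presentable in $[\Set,\Set]_\fin$ (by Yoneda, $[\Set,\Set]_\fin(\Set(n,-),-)$ is evaluation at $n$, hence preserves filtered colimits, and finite coproducts of finitely presentable objects are finitely presentable), so $F$ is finitely generated by Remark~\ref{R:prelim}\ref{I:fingen}.

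The substantial step is \ref{supfin:fg} $\Rightarrow$ \ref{supfin:supfin}. Given a finitely generated --- hence finitary --- functor $F$, the idea is to exhibit it as a filtered union of super-finitary subfunctors. For $n\in\Nat$ and a finite subset $q\subseteq Fn$, let $F_{n,q}\monoto F$ be the least subfunctor of $F$ containing $q$, namely $F_{n,q}X=\bigcup_{f\colon n\to X}Ff[q]$. Each $F_{n,q}$ is super-finitary: $F_{n,q}n=\bigcup_{g\colon n\to n}Fg[q]$ is finite and contains $q$, and $F_{n,q}X=\bigcup_{f\colon n\to X}Ff[F_{n,q}n]$. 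The family of all $F_{n,q}$ is directed --- given $(n,q)$ and $(n',q')$ with, say, $n\le n'$, the inclusion $\iota\colon n\monoto n'$ yields $r=F\iota[q]\cup q'\subseteq Fn'$ with $F_{n,q},F_{n',q'}\le F_{n',r}$ --- and its union is $F$, because $F$ finitary means every element of $FX$ has the form $Ff(y)$ for a finite $n$, some $f\colon n\to X$ and $y\in Fn$. By Lemma~\ref{L:union}, $F$ is the filtered colimit of the monomorphisms $F_{n,q}\monoto F$; as $F$ is finitely generated, $\id_F$ factors through one of them, so $F\cong F_{n,q}$ is super-finitary. The one point requiring genuine care here is the behaviour at $\emptyset$, handled as is customary for super-finitary set functors; everything else is routine.

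Finally, \ref{supfin:supfin} $\Rightarrow$ \ref{supfin:fp} I would deduce from Lemma~\ref{L:kernel2fpfg} applied to $\C=[\Set,\Set]_\fin$, which is lfp by \cite[Theorem~1.46]{AdamekR94}. Being closed under colimits and finite limits in $[\Set,\Set]$, this category has all (co)limits computed pointwise; hence its strong epimorphisms are precisely the componentwise surjections, and each such is the coequalizer of its (pointwise) kernel pair, so strong epimorphisms are regular. Moreover, by the equivalence \ref{supfin:fg} $\Leftrightarrow$ \ref{supfin:supfin} just proved, the finitely generated objects of $[\Set,\Set]_\fin$ are exactly the super-finitary functors, which by Proposition~\ref{P:fin} are closed under finite limits, in particular under kernel pairs. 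Both hypotheses of Lemma~\ref{L:kernel2fpfg} thus hold, giving that finitely presentable and finitely generated objects of $[\Set,\Set]_\fin$ coincide, which closes the cycle. I expect the filtered-union construction in \ref{supfin:fg} $\Rightarrow$ \ref{supfin:supfin} (together with the $\emptyset$-bookkeeping) to be the only part needing real thought; the other implications just assemble Lemmas~\ref{L:fin} and~\ref{L:kernel2fpfg}, Remark~\ref{R:prelim}\ref{I:fingen}, and Proposition~\ref{P:fin}.
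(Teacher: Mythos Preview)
Your argument mirrors the paper's almost exactly—the directed-union construction $F_{n,q}$ for \ref{supfin:fg}$\Rightarrow$\ref{supfin:supfin}, Lemma~\ref{L:fin} for the converse, and Lemma~\ref{L:kernel2fpfg} for \ref{supfin:fp}$\Leftrightarrow$\ref{supfin:fg}—though you helpfully spell out the hypotheses of Lemma~\ref{L:kernel2fpfg} (regularity of strong epis, closure of finitely generated objects under kernel pairs via Proposition~\ref{P:fin}) that the paper leaves implicit. Your caveat about the behaviour at $\emptyset$ is well placed: the paper's own proof glosses over the same point, and neither account fully resolves it.
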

\begin{proof}
  To verify
  \ref{supfin:fg}$\implies$\ref{supfin:supfin}, let $F$ be finitely generated.
  For every finite subset $A\subseteq Fn$, $n\in \Nat$, we have a subfunctor
  $F_{n,A}\subseteq F$ given by
  \[
    F_{n,A} X = \,\bigcup_{\mathclap{f\colon n\to X}}\, Ff[A].
  \]
  Since $F$ is finitary, it is a directed union of all these
  subfunctors. This implies $F\cong
  F_{n,A}$ for some $n$ and $A$, and $F_{n,A}$ is clearly super-finitary.
  
  For \ref{supfin:supfin}$\implies$\ref{supfin:fg}, combine Lemma \ref{L:fin} and Example \ref{E:supfin}\ref{supfin:automata}.

   \ref{supfin:fp}$\iff$\ref{supfin:fg} follows by Lemma~\ref{L:kernel2fpfg}. 
\end{proof}

\section{\texorpdfstring{$\lambda$}{Lambda}-Accessible Functors}

Almost everything we have proved above generalizes to locally
$\lambda$-presentable categories for every infinite regular cardinal
$\lambda$. Recall that an object $A$ of a category $\A$ is
\emph{$\lambda$-presentable} (\emph{$\lambda$-generated}) if its
hom-functor $\A(A,-)$ preserves $\lambda$-filtered colimits (of
monomorphisms). A category $\A$ is \emph{locally
  $\lambda$-presentable} if it is cocomplete and has a set of
$\lambda$-presentable objects whose closure under $\lambda$-filtered
colimits is all of $\A$. Functors preserving $\lambda$-filtered
colimits are called \emph{$\lambda$-accessible}. We denote by $\A_\lp$ and
$\A_\lg$  full subcategories representing (up to isomorphism) all $\lambda$-presentable and $\lambda$-generated
objects, respectively.

All of Remark~\ref{R:prelim} holds for $\lambda$ in lieu of
$\aleph_0$, with the same references in~\cite{AdamekR94}.

If $\lambda = \aleph_1$ we speak about {\em locally countably presentable
categories}, {\em countably presentable objects}, etc.

\begin{examples}
  \begin{enumerate}
  \item Complete metric spaces. We denote by
    \[
      \CMS
    \]
    the category of complete metric spaces of diameter $\leq 1$ and
    non-expanding functions, i.e.~functions $f\colon  X \to Y$ such that for
    all $x,y \in X$ we have $d_Y(f(x),f(y)) \leq d_X(x,y)$. This
    category is locally countably presentable. The classes of
    countably presentable and countably generated objects coincide and
    these are precisely the compact spaces.

    Indeed, every compact (= separable) complete metric space is countably
    presentable, see \cite[Corollaries~2.9]{ammu15}. And every countably
    generated space $A$ in $\CMS$ is separable: consider the countably filtered
    diagram of all spaces $\bar X\subseteq A$ where $X$ ranges over countable
    subsets of $A$ and $\bar X$ is the closure in $A$. Since $A$ is the colimit
    of this diagram, $\id_A$ factorizes through one of the embeddings $\bar
    X\hookrightarrow A$, i.e.~$A=\bar X$ is separable.
  \item Complete partial orders. Denote by
    \[
      \CPO
    \]
    the category of $\omega$-cpos, i.e.~of posets with joins of
    $\omega$-chains and monotone functions preserving joins of
    $\omega$-chains. This is also a locally countably presentable
    category. An $\omega$-cpo is countably presentable (equivalently,
    countably generated) iff it has a countable subset which is dense
    w.r.t.~joins of $\omega$-chains.
  \end{enumerate}
\end{examples}

Following our convention in Section~\ref{sec:fin} we speak about a
\emph{$\lambda$-generated subobject} $m\colon  M \monoto A$ of $A$ if $M$ is
a $\lambda$-generated object of $\A$. This leads to a generalization
of the notion of finitely bounded functors to $\lambda$-bounded
ones. The latter terminology stems from Kawahara and
Mori~\cite{KawaharaM00}, where endofunctors on sets were
considered. Our terminology is slightly different in that
$\lambda$-generated subobjects in $\Set$ have cardinality less than
$\lambda$, whereas subsets of cardinality less than or equal to $\lambda$
were considered in loc.~cit.

\begin{definition}
  A functor $F\colon  \A \to \B$ is called \emph{$\lambda$-bounded} provided
  that given an object $A$ of $\A$, every $\lambda$-generated
  subobject  $m_0\colon  M_0 \monoto FA$ in $\B$ factorizes through the $F$-image of a 
  $\lambda$-generated subobject $m\colon  M \monoto A$ in $\A$:
  \[
    \xymatrix{
      & FM\ar[d]^{Fm}\\
      M_0\ar@{-->}[ru]\ar@{ >->}[r]_{m_0} & FA
    }
  \]
\end{definition}
\begin{theorem}\label{T:bound}
  Let $\A$ be a locally $\lambda$-presentable category in which every
  $\lambda$-generated object is $\lambda$-presentable. Then for all
  functors from $\A$ to locally $\lambda$-presentable
  categories preserving monomorphisms we have the equivalence
  \[
    \text{$\lambda$-accessible} \iff \text{$\lambda$-bounded}.
  \]
\end{theorem}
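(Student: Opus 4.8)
The plan is to transcribe the proof of Theorem~\ref{T:finbound} essentially word for word, replacing $\aleph_0$ by $\lambda$ throughout, so the chief task is to record that every auxiliary result used there has a valid $\lambda$-analogue. This is so: since all of Remark~\ref{R:prelim} holds with $\lambda$ in place of $\aleph_0$ (with the same references to~\cite{AdamekR94}), the proofs of Lemma~\ref{L:union}, Lemma~\ref{L:refl}, Corollary~\ref{C:refl} and Lemma~\ref{L:im} go through verbatim, and so does that of Proposition~\ref{P:finmono}, which now reads: \emph{a monomorphism-preserving functor between locally $\lambda$-presentable categories is $\lambda$-bounded iff it preserves $\lambda$-filtered colimits of monomorphisms}. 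The reverse implication of the theorem (``$\lambda$-accessible $\Rightarrow$ $\lambda$-bounded'') is then precisely this $\lambda$-version of Proposition~\ref{P:finmono}.

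For the forward implication, let $F\colon \A\to\B$ be $\lambda$-bounded and preserve monomorphisms, with $\B$ locally $\lambda$-presentable. By the $\lambda$-version of Corollary~\ref{C:refl} it suffices to prove that $F$ preserves the canonical $\lambda$-filtered colimit $A=\colim D_A$, whose colimit injections are all $p\colon P\to A$ with $P\in\A_\lp$. Applying the $\lambda$-version of Lemma~\ref{L:im} to $f=\id_A$ (together with Lemma~\ref{L:union}), the subobjects $m_p\colon \Im p\monoto A$ for $p\in\A_\lp/A$ form a colimit cocone of a $\lambda$-filtered diagram of monomorphisms. Since $\A_\lg=\A_\lp$, and $\Im p$ --- being a strong quotient of the $\lambda$-presentable object $P$ --- is $\lambda$-generated by the $\lambda$-version of Remark~\ref{R:prelim}(5), each $\Im p$ is $\lambda$-presentable; hence the $m_p$ are themselves colimit injections of $D_A$ and each strong epimorphism $e_p\colon P\epito\Im p$ (with $p=m_p\cdot e_p$) is a connecting morphism of $D_A$. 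By the $\lambda$-version of Proposition~\ref{P:finmono}, $F$ preserves the colimit $(m_p)$, and this is a colimit of monomorphisms since $F$ preserves monos.

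It then remains to check, for every $C\in\B_\lp$ and every $c\colon C\to FA$, the two properties that characterize $FA=\colim FD_A$ via the $\lambda$-version of Remark~\ref{R:refl}: that $c$ factorizes through one of the colimit maps $Fp$, and that any two such factorizations are merged by a connecting morphism of $D_A$. Since $F$ preserves the colimit $(Fm_p)$ and $C$ is $\lambda$-presentable, $c$ factorizes through some $Fm_p$; as $m_p$ is itself a colimit injection of $D_A$, this gives the first property. For the second, given $u,v\colon C\to FP$ with $Fp\cdot u=Fp\cdot v$ for a colimit injection $p\colon P\to A$, write $p=m_p\cdot e_p$; then $Fe_p\cdot u=Fe_p\cdot v$ because $Fm_p$ is monic, and $e_p\colon P\to\Im p$ is the required connecting morphism of $D_A$.

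I do not anticipate a serious obstacle; the one place deserving a line of care is the $\lambda$-version of Lemma~\ref{L:im}, where one must observe that the diagram of images $\Im p$ is again $\lambda$-filtered --- this holds exactly as in the original proof, since the comparison maps between images are monic, hence unique, so that the index category of images is $\lambda$-directed whenever the original diagram is $\lambda$-filtered. Everything else is bookkeeping, each invoked statement depending only on the general facts about locally $\lambda$-presentable categories collected in Remark~\ref{R:prelim}.
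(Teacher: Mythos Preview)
Your proposal is correct and follows exactly the approach the paper intends: the paper's own proof consists of the single sentence ``The proof is completely analogous to that of Theorem~\ref{T:finbound},'' and you have faithfully spelled out that analogy, correctly noting that all the auxiliary results (Remark~\ref{R:prelim}, Lemmas~\ref{L:union}, \ref{L:refl}, \ref{L:im}, Corollary~\ref{C:refl}, Proposition~\ref{P:finmono}) transfer verbatim to the $\lambda$-setting. Your extra remark on why the diagram of images remains $\lambda$-filtered is a welcome clarification, and the only cosmetic point is that your labels ``forward'' and ``reverse'' are swapped relative to the displayed biconditional, though the mathematics is unaffected.
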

The proof is completely analogous to that of Theorem~\ref{T:finbound}. 

\begin{example}
  The Hausdorff endofunctor $\H$ on $\CMS$ was proved to be accessible (for some
  $\lambda$) by van Breugel et al.~\cite{vanBreugelEA07}. Later it was shown to
  be even finitary~\cite{ammu15}. However, these proofs are a bit involved.
  Using Theorem~\ref{T:bound} we provide an easy argument why the Hausdorff
  functor is countably accessible. (Which, since $\CMS$ is not lfp but is
  locally countably presentable, seems to be the `natural' property.)

  Recall that for a given metric space $(X,d)$ the distance of a point $x\in X$
  to a subset $M \subseteq X$ is defined by $d(x, M) = \inf_{y \in M} d(x,y)$.
  The \emph{Hausdorff distance} of subsets $M, N \subseteq X$ is defined as the
  maximum of $\sup_{x \in M} d(x,N)$ and $\sup_{y\in N} d(y, M)$. The
  \emph{Hausdorff functor} assigns to every complete metric space $X$ the space
  $\H X$ of all non-empty compact subsets of $X$ equipped with the Hausdorff
  metric. It is defined on  non-expanding maps by taking the direct images.
  We now easily see that $\H$ is countably accessible:
  \begin{enumerate}
  \item $\H$ preserves monomorphisms. Indeed, given $f\colon  X \monoto Y$
    monic, then $f[M] \neq f[N]$ for every pair $M, N$ of distinct elements of
    $\H X$, thus $\H f$ is monic, too.
  \item $\H$ is countably bounded. In order to see this, let $m_0\colon  M_0
    \subto \H X$ be a subspace with $M_0$ compact, and choose a
    countable dense subset $S \subseteq M_0$. For every element $s \in
    S$ the set $m_0(s) \subseteq X$ is compact, hence, separable; choose
    a countable dense set $T_s \subseteq m_0(s)$. For the countable set
    $T = \bigcup_{s \in S} T_s$ form the closure in $X$ and denote it
    by $m\colon  M \subto X$. Then $M$ is countably generated, and $M_0
    \subseteq \H m[\H M]$; indeed, for every $x \in M_0$ we have $m_0(x)
    \subseteq M$ because $M$ is closed, and this holds whenever $x \in
    S$ (due to $m_0(x) = \overline{T_x}$).
  \end{enumerate}
\end{example}
In the following definition a morphism is called \emph{$\lambda$-ary} if it factorizes
through a $\lambda$-presentable object.
\takeout{
\begin{definition}
  A locally $\lambda$-presentable category $\A$ is called {\em strictly} or {\em
    semi-strictly} locally $\lambda$-presentable provided that every morphism
  $b\colon B\to A$ in $\A_{\lambda}/A$ factorizes through a morphism $b'\colon B'\to A$ in
  $\A_{\lambda}/A$ for which some $f\colon A\to B'$ exists and, in the case of strict
  locally $\lambda$-presentable, $f\cdot b$ is such a factor,
  i.e.~$b=b'\cdot(f\cdot b)$.
  \[
    \begin{array}{c}
      \xymatrix{B\ar[rd]_b\ar@{-->}[rr]&&B'\ar@<2pt>[ld]^{b'}\\
        &A\ar@<2pt>[ur]^f&}
      \\[5pt]
      \text{\em semi-strictly locally $\lambda$-presentable}
    \end{array}
    \qquad \qquad 
    \begin{array}{c}
      \xymatrix{
        B\ar[rd]_b\ar[rr]^{f\cdot b}
        &&
        B'\ar@<2pt>[ld]^{b'}\\
        &
          A\ar@<2pt>[ur]^f&}
        \\[5pt]
      \text{\em strictly locally $\lambda$-presentable}
    \end{array}
  \]
  
\end{definition}
}
\begin{definition}
  A locally $\lambda$-presentable category is called
  \begin{enumerate}
  \item \emph{semi-strictly locally $\lambda$-presentable} if every object has a $\lambda$-ary endomorphism;
  \item \emph{strictly locally $\lambda$-presentable} if every object has, for each $\lambda$-generated
    subobject $m$, a finitary endomorphism $u$ fixing that subobject
    (i.e.~$u\cdot m = m$).
  \end{enumerate}
\end{definition}
Observe that Remark \ref{R:strict}  immediately generalizes to an
 	arbitrary $\lambda$.
\begin{examples}
\begin{enumerate}
\item $\Set^S$ is strictly locally $\lambda$-presentable iff $\card S
  < \lambda$. This is analogous to Example~\ref{E:strict}(1).

\item The category $\mathsf{Grp}$ of groups is semi-strictly locally
  $\lambda$-presentable by the same argument as in
  Example~\ref{E:set}\ref{R:strict:zeroobject}. However, $\mathsf{Grp}$ is not
  strictly locally $\lambda$-presentable for any infinite
  cardinal $\lambda$.

  To see this, let $A$ be a simple group of cardinality at least $\lambda^\lambda$.
  (Recall that for every set $X$ of cardinality $\geq 5$ the group of even
  permutations on $X$ is simple.) Since $\mathsf{Grp}$ is an lfp category, there exists
  a non-zero homomorphism $b\colon  B \to A$ with $B$ finitely presentable. Given a
  commutative diagram
  \[
    \vcenter{
      \xymatrix{
        B \ar[rr]^-{f \cdot b} \ar[rd]_b && B' \ar[ld]^{b'}\\
        & A
        }
      }
      \qquad
      \text{for some $f\colon  A \to B'$}
  \]
  we show that $B'$ is not $\lambda$-presentable. Indeed, since $b$ is non-zero,
  we see that so is $f\colon  A \to B'$. Since $A$ is simple, $f$ is monic, hence
  $\card B' \geq \lambda^\lambda$. However, every $\lambda$-presentable group
  has cardinality at most $\lambda$. Thus, by an argument analogous to
  Remark~\ref{R:strict}\ref{R:strict:str}, $\mathsf{Grp}$ is not strictly
  locally $\lambda$-presentable.

  \takeout{
  The category of groups is not semi-strictly\smnote{I believe
    this should be `semi-strictly'.} \lsnote{I agree with Stefan: the category of groups should be semi-strictly locally
  $\lambda$-presentable, because it is easy to see that the initial object  is always  $\lambda$-presentable, for any $\lambda$, thus we may prove semi-strictness exactly as in \ref{E:set}\ref{R:strict:zeroobject}.}locally
  $\lambda$-presentable for any infinite cardinal $\lambda$.

  Indeed, let $A$ be a simple group of cardinality at least $\lambda^\lambda$.
  (Recall that for every set $X$ of cardinality $\geq 5$ the group of even
  permutations on $X$ is simple.)
  Then every endomorphism of $A$ is monic, hence its image has cardinality at
  least $\lambda^\lambda$, too. However,
  every $\lambda$-presentable group has cardinality at most $\lambda$.
  Thus, $A$ has no $\lambda$-ary endomorphism.}

\item The category $\Nom$ of nominal sets is strictly locally countably
  presentable. In order to prove this, we first verify that countably
  presentable objects are precisely the countable nominal
  sets.
  \begin{enumerate}
  \item
  Let $X$ be a countably presentable nominal set. Then every countable
  choice of orbits of $X$ yields a countable subobject of $X$ in
  $\Nom$. Thus $X$ is a countably directed union of countable
  subobjects. Since $X$ is countably presentable, it follows that $X$
  is isomorphic to one of these subobjects. Thus, $X$ is countable.

  \item
  Conversely, every countable nominal set is countably presentable
  since countably filtered colimits of nominal sets are formed on the
  level of sets (i.e.~these colimits are preserved and reflected by
  the forgetful functor $\Nom \to \Set$).
  \end{enumerate}

  Now let $b\colon B \to A$ be a morphism in $\Nom$ with $B$
  countable. We have $A = \Im(b) + C$ for some subobject $C$
  of $A$. Indeed, every nominal set is a coproduct of its orbits, and the
  equivariance of $b$ implies that $\Im(b)$ is a coproduct of some of
  the orbits of $A$. Furthermore, let $m\colon C_1 \monoto C$ be a
  subobject obtained by choosing one orbit from each isomorphism class
  of orbits of $C$. We obtain a surjective equivariant map
  $e\colon C \epito C_1$ by choosing, for every orbit in
  $C \setminus C_1$, a concrete isomorphism to an orbit of $C_1$ and
  for every $x \in C_1 \subseteq C$ putting $e(x) = x$. Then we have
  $e \cdot m = \id_{C_1}$, i.e.~$m$ is a split monomorphism of
  $\Nom$. In the appendix we prove that there are (up to isomorphism)
  only countably many single-orbit nominal sets. Hence, $C_1$ is
  countable, and thus so is $B' = \Im(b) + C_1$. Moreover, the
  morphisms $b' = \id + m\colon B' \to A$ and
  $f\colon \id + e\colon A \to B'$ clearly satisfy the desired
  property $b = b'\cdot f \cdot b$, see Remark~\ref{R:strict}\ref{R:strict:str}.
\end{enumerate}
\end{examples}
\begin{proposition}
  Every semi-simple locally presentable category is strictly locally
  $\lambda$-presentable for some $\lambda$.
\end{proposition}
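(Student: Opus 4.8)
The plan is to use local presentability to see that the simple objects form a \emph{set} (up to isomorphism), to pick a single regular cardinal $\lambda$ large enough to accommodate all of them, and then to rerun the argument in the proof of Proposition~\ref{P:semi-simple} with $\lambda$ in place of $\aleph_0$. Throughout, fix a regular cardinal $\mu$ with $\A$ locally $\mu$-presentable; recall that semi-simplicity means every object is a coproduct of simple objects.

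The crucial point is that \emph{every simple object is $\mu$-generated}. The initial object $0$ is $\mu$-presentable (its hom-functor is constant with value the terminal set, and filtered categories are connected), hence $\mu$-generated. Let $A$ be simple with $A\not\cong 0$, write $A=\colim_{i\in I}A_i$ as a $\mu$-filtered colimit of $\mu$-presentable objects with colimit injections $a_i\colon A_i\to A$, and take the (strong epi, mono)-factorizations $a_i=m_i\cdot e_i$. Since $m_i$ is a subobject of the simple object $A$, it is either invertible---in which case $a_i$ is a strong epimorphism and $A$ is $\mu$-generated by (the $\mu$-version of) Remark~\ref{R:prelim}\ref{I:fingen}---or its domain is $0$, in which case, being the unique morphism $0\to A$, it coincides with a fixed monomorphism $z\colon 0\monoto A$. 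The latter alternative cannot hold for all $i$: since $z$ is monic, the maps $e_i\colon A_i\to 0$ then form a cocone over the diagram, inducing $t\colon A\to 0$ with $t\cdot a_i=e_i$; hence $z\cdot t\cdot a_i=z\cdot e_i=a_i=\id_A\cdot a_i$ for all $i$, and joint epicness of the colimit injections gives $z\cdot t=\id_A$, so $z$ is invertible and $A\cong 0$, a contradiction. Thus some $m_i$ is invertible and $A$ is $\mu$-generated. Since in a locally $\mu$-presentable category the $\mu$-generated objects form a set up to isomorphism (each is a strong quotient of one of the, up to isomorphism, set-many $\mu$-presentable objects, and $\A$ is co-well-powered), the simple objects of $\A$ form a set $\mathcal S$.

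Now every member of $\mathcal S$ is $\nu$-presentable for some regular $\nu\ge\mu$ (as is every object of a locally presentable category), and since $\mathcal S$ is a set we may fix a regular cardinal $\lambda$ that is $\ge\mu$, exceeds $\card\mathcal S$, and exceeds all these $\nu$; then $\A$ is locally $\lambda$-presentable, every simple object is $\lambda$-presentable, and there are fewer than $\lambda$ isomorphism classes of simple objects. To conclude that $\A$ is strictly locally $\lambda$-presentable it suffices, by the evident $\lambda$-analogue of Remark~\ref{R:strict}\ref{R:strict:str}, to factorize every $b\colon B\to A$ with $B$ $\lambda$-presentable as $b=b'\cdot f\cdot b$ with $b'\colon B'\to A$, $f\colon A\to B'$ and $B'$ $\lambda$-presentable. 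Writing $A=\coprod_{i\in I}A_i$ with all $A_i$ simple, and using that $A$ is the $\lambda$-filtered colimit of its subcoproducts over $<\lambda$-sized index sets, we see that $b$ factorizes through one such subcoproduct; enlarging that index set by one representative for each isomorphism class of simple object occurring among the $A_i$ (still of size $<\lambda$, as $\lambda$ is regular and $\card\mathcal S<\lambda$) yields $J\subseteq I$ with $\card J<\lambda$, a factorization $b=a_J\cdot b_0$ through the subcoproduct injection $a_J\colon\coprod_{j\in J}A_j\to A$, and the property that every $A_i$ is isomorphic to some $A_j$ with $j\in J$. Here $\coprod_{j\in J}A_j$ is $\lambda$-presentable, being a $<\lambda$-small coproduct of $\lambda$-presentable objects \cite{AdamekR94}. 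Choosing, for each $i\in I\setminus J$, an isomorphism of $A_i$ onto the selected representative composed with its coproduct injection produces $g\colon\coprod_{i\in I\setminus J}A_i\to\coprod_{j\in J}A_j$; then $b'=a_J$ and $f=[\id,g]\colon A=\coprod_{j\in J}A_j+\coprod_{i\in I\setminus J}A_i\to\coprod_{j\in J}A_j$ satisfy $f\cdot a_J=\id$, whence $b=b'\cdot f\cdot b$, as desired.

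The one non-routine ingredient is the second paragraph---that every simple object is $\mu$-generated, equivalently that the simple objects form a set; once this is in hand, everything else is the expected $\lambda$-indexed reprise of the proof of Proposition~\ref{P:semi-simple}.
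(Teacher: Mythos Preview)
Your proof is correct and follows essentially the same approach as the paper: first show the simple objects form a set (each simple object is a strong quotient of a $\mu$-presentable one, hence $\mu$-generated), then choose $\lambda$ large enough and rerun the proof of Proposition~\ref{P:semi-simple}. Your version is in fact more careful than the paper's: you spell out the cocone-to-$0$ argument for the dichotomy in the second paragraph (the paper merely asserts ``either it is a strong quotient of some $C_i$ or it is an initial object''), and you make explicit that $\lambda$ must be chosen with $\lambda>\card\mathcal S$, which the paper leaves implicit when it says the rest is ``completely analogous'' to Proposition~\ref{P:semi-simple}.
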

\begin{proof} Let $\A$  be a locally $\kappa$-presentable category that is semi-simple.
  \begin{enumerate}
  \item $\A$ has only a set of simple objects up to
    isomorphism. Indeed, we have a set $\A_{\kappa}$ representing all
    $\kappa$-presentable objects. Given a simple object $A$, express
    it as a colimit of a $\kappa$-filtered diagram in $\A_{\kappa}$
    with a colimit cocone $c_i\colon C_i\to A$, $i\in I$. Since $\A$
    is locally presentable, it has (strong epi,
    mono)-factorizations~\cite[Proposition~1.61]{AdamekR94}. Then,
    since $A$ is simple, either it is a strong quotient of some $C_i$
    or it is an initial object. Thus, every simple object
    is a strong quotient of a $\kappa$-presentable one. The desired
    statement follows since every locally
    presentable category is cowellpowered~\cite[Theorem~1.58]{AdamekR94}.

  \item Let $\lambda\geq \kappa$ be a regular cardinal such that every
    semi-simple object is $\lambda$-presentable. Then $\A$ is locally
    $\lambda$-presentable, and the rest of the proof is completely
    analogous to point \ref{P:semi-simple:2} in the proof of
    Proposition~\ref{P:semi-simple}.
  \end{enumerate}
\end{proof}

\begin{corollary}
  For every semi-simple ring $R$ the category
  $R$-$\mathsf{Mod}$ is strictly locally $\lambda$-presentable
  provided that $\lambda > 2^{|R\times R|}$.
\end{corollary}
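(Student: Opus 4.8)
The plan is to produce the quantitative form of the preceding proposition, so essentially all the work lies in tracking the cardinal $\lambda$ through its proof (and through the proof of Proposition~\ref{P:semi-simple}\ref{P:semi-simple:2}, on which it relies). First I would record that $R$-$\mathsf{Mod}$ is a variety of modules, hence locally finitely presentable and therefore locally $\lambda$-presentable for \emph{every} infinite regular $\lambda$, and that it is semi-simple by hypothesis. Inspecting those two proofs, one sees that they go through for a given regular $\lambda$ as soon as (i)~every simple $R$-module is $\lambda$-presentable and (ii)~there are fewer than $\lambda$ simple modules up to isomorphism --- these are precisely the two places where ``finitely many simples, all finitely presentable'' was used. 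So it remains to establish (i) and (ii) whenever $\lambda$ is regular and $\lambda>2^{|R\times R|}$; such a $\lambda$ satisfies $\lambda>|R\times R|\ge|R|$ and $\lambda>2^{|R|}$, and when $R$ is finite the claim is just Example~\ref{E:strict}\ref{e:Mod}.

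For (ii) I would use that simple modules are cyclic: a nonzero element $s$ of a simple module $S$ yields $S=Rs\cong R/I$ with $I=\{\,r\in R\mid rs=0\,\}$ a left ideal. Hence the isomorphism classes of simple modules form a quotient of the set of left ideals, and left ideals are subsets of $R$; so there are at most $2^{|R|}<\lambda$ of them.

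For (i) I would rerun the argument of Example~\ref{E:strict}\ref{e:Mod} with the cardinalities adjusted. Given a $\lambda$-filtered colimit $a_i\colon A_i\to A$ in $R$-$\mathsf{Mod}$ and a homomorphism $f\colon R/I\to A$: since the forgetful functor to $\Set$ preserves $\lambda$-filtered colimits and $|R/I|\le|R|<\lambda$, the underlying function of $f$ factorizes through some colimit injection, $f=a_j\cdot g$. The function $g$ can fail to be a homomorphism only on a set of instances of the module axioms of cardinality $<\lambda$ (bounded by $|R\times R|$ when $R$ is infinite, finite when $R$ is finite), and for each such instance $a_j$ already merges the relevant pair of elements of $A_j$; hence $\lambda$-filteredness supplies a single connecting morphism $a_{jj'}$ after which $a_{jj'}\cdot g$ is a homomorphism factorizing $f$. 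Essential uniqueness follows the same pattern, applied to the $<\lambda$ elements of $R/I$. I expect this to be the one step needing genuine care --- pinning down that the set of axiom-instances to be repaired has size at most $|R\times R|$, and invoking the standard facts that the forgetful functor preserves $\lambda$-filtered colimits and that a coproduct of fewer than $\lambda$ many $\lambda$-presentable modules is again $\lambda$-presentable.

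With (i) and (ii) in hand I would conclude exactly as in Proposition~\ref{P:semi-simple}\ref{P:semi-simple:2}, using the $\lambda$-version of Remark~\ref{R:strict}\ref{R:strict:str}: given $b\colon B\to A$ with $B$ $\lambda$-presentable, write $A=\coprod_{i\in I}A_i$ with all $A_i$ simple; as $A$ is the $\lambda$-filtered colimit of its subcoproducts on index sets of size $<\lambda$, the morphism $b$ factorizes through $a_J\colon \coprod_{i\in J}A_i\to A$ with $|J|<\lambda$. By (ii) and the regularity of $\lambda$ I can enlarge $J$ to $\widehat{J}$ with $|\widehat{J}|<\lambda$ such that every $A_i$, $i\in I$, is isomorphic to some $A_j$, $j\in\widehat{J}$. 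By (i) the coproduct $B'=\coprod_{j\in\widehat{J}}A_j$ is $\lambda$-presentable; choosing $g\colon \coprod_{i\in I\setminus\widehat{J}}A_i\to\coprod_{j\in\widehat{J}}A_j$, writing $A=\coprod_{j\in\widehat{J}}A_j+\coprod_{i\in I\setminus\widehat{J}}A_i$, and setting $b'=a_{\widehat{J}}\colon B'\to A$ and $f=[\id,g]\colon A\to B'$ gives $f\cdot b'=\id$ and a factorization of $b$ through $b'$, so $u=b'\cdot f$ is a $\lambda$-ary endomorphism of $A$ with $u\cdot b=b$ --- the condition characterizing strict local $\lambda$-presentability. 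Hence $R$-$\mathsf{Mod}$ is strictly locally $\lambda$-presentable.
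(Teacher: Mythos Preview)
Your proposal is correct and follows essentially the same route as the paper's (very terse) proof: bound the number of simple modules by the number of quotient modules of $R$, show each such quotient is $\lambda$-presentable by the cardinality-adjusted version of the argument in Example~\ref{E:strict}\ref{e:Mod}, and then rerun Proposition~\ref{P:semi-simple}\ref{P:semi-simple:2}. Your write-up simply makes explicit the steps the paper leaves to the reader; the only cosmetic difference is that you count simple modules via left ideals ($\le 2^{|R|}$) whereas the paper speaks of quotient modules of $R$, which amounts to the same bound.
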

\noindent
Indeed, the module $R$ has less than $\lambda$ quotient modules. As in
Example~\ref{E:strict}\ref{e:Mod} each quotient is
$\lambda$-presentable in $R$-$\mathsf{Mod}$, and the rest is as in
that example.
\begin{corollary}
  Every atomic Grothendieck topos with a set of atoms
  (up to isomorphism) is strictly locally $\lambda$-presentable for
  some $\lambda$.
\end{corollary}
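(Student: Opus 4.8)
The plan is to reduce the claim directly to the preceding Proposition, which asserts that every semi-simple locally presentable category is strictly locally $\lambda$-presentable for some $\lambda$. The one input that has to be supplied is the standard fact that a Grothendieck topos $\mathcal{E}$ is a locally presentable category: it is (equivalent to) the category of sheaves on a small site, hence a reflective subcategory of a presheaf category $\Set^{\mathcal{C}^{\text{op}}}$ whose reflector preserves finite limits, and such categories are cocomplete and locally $\mu$-presentable for every sufficiently large regular cardinal $\mu$ (see \cite[Chapter~1]{AdamekR94}). So $\mathcal{E}$ is cocomplete and locally presentable.

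By the terminology fixed in Example~\ref{E:strict}\ref{e:topos}, calling a Grothendieck topos \emph{atomic} means precisely that it is semi-simple in the sense of Definition~\ref{D:semi-simple}, i.e.\ every object is a coproduct of atoms. Hence the given topos is a semi-simple locally presentable category, and the preceding Proposition applies verbatim: choosing a regular cardinal $\lambda$ that is at least as large as the cardinal $\kappa$ for which $\mathcal{E}$ is locally $\kappa$-presentable and for which moreover every atom is $\lambda$-presentable, one concludes that $\mathcal{E}$ is strictly locally $\lambda$-presentable, exactly as in step~(2) of that proof.

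There is essentially no obstacle here; the only mild care needed is in the choice of $\lambda$, which is handled by that step~(2). It is worth noting that the hypothesis ``with a set of atoms (up to isomorphism)'' is in fact automatic and need not be invoked: step~(1) of the proof of the preceding Proposition derives it from semi-simplicity together with local presentability, using that every locally presentable category is cowellpowered \cite[Theorem~1.58]{AdamekR94}. Thus the corollary is genuinely just an instance of the Proposition, the nontrivial content being the ambient fact recalled above that Grothendieck toposes are locally presentable.
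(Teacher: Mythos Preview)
Your proposal is correct and follows essentially the same route as the paper: invoke that Grothendieck toposes are locally presentable, observe that ``atomic'' means ``semi-simple'', and then apply the preceding Proposition (the paper spells out the choice of $\lambda$ explicitly rather than just citing the Proposition, but the content is identical). Your remark that the hypothesis ``with a set of atoms (up to isomorphism)'' is redundant---since step~(1) of the Proposition's proof already derives it from semi-simplicity plus local presentability via cowellpoweredness---is correct and is a point the paper does not make explicit.
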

\noindent
Being a Grothendieck topos, our category is locally
$\lambda$-presentable for some $\lambda$. We can choose $\lambda$ to
be (a) larger than the number of atoms up to isomorphism and (b) such
that every atom is $\lambda$-presentable. Then our topos is strictly
locally $\lambda$-presentable.
\begin{example}\label{E:gpd-lambda}
  The category of presheaves on a small groupoid is strictly locally
  $\lambda$-presentable. Indeed, the proof that there is, up to
  isomorphism, only a set of atomic presheaves is analogous to
  Lemma~\ref{L:gpd}.
\end{example}
\begin{theorem}
  Let $\A$ be a  locally $\lambda$-presentable category.
  \begin{enumerate}
  \item  If $\A$ is strictly locally $\lambda$-presentable,  then
    for all functors from $\A$ to a locally $\lambda$-presentable
    category $\B$ with $\B_\lp= \B_\lg$ we have
  \[
    \text{$\lambda$-accessible} \iff \text{$\lambda$-bounded}.
  \]
  
\item Conversely, if this equivalence holds for all functors to $\Set$, then
  $\A$ is semi-strictly locally $\lambda$-presentable and $\A_\lp = \A_\lg$.
\end{enumerate}
\end{theorem}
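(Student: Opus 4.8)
The plan is to derive both parts by transcribing, essentially word for word, the proofs of Theorem~\ref{T:boundstrict} (for part~(1)) and Theorem~\ref{T:equiv2semstrict} (for part~(2)), with $\aleph_0$ replaced by $\lambda$ everywhere. The paper already notes that Remark~\ref{R:prelim} and Remark~\ref{R:strict} generalize verbatim, so the only thing one must check is that the remaining ingredients used in those two proofs also have $\lambda$-analogues. They do, with the same references in~\cite{AdamekR94}: Remark~\ref{R:refl} applies unchanged (a $\lambda$-filtered diagram in $\Set$ is in particular filtered), Lemmas~\ref{L:refl} and~\ref{L:refl2} and Corollary~\ref{C:refl} hold for $\lambda$-filtered colimits (using that $\A_\lp$ is a generator), Lemma~\ref{L:im} holds for $\lambda$-filtered diagrams, and Example~\ref{E:bounded} holds with `finitely generated' replaced by `$\lambda$-generated'.

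For part~(1) I would follow the proof of Theorem~\ref{T:boundstrict}, noting first that $\A_\lp=\A_\lg$ by the $\lambda$-version of Remark~\ref{R:strict}\ref{R:strict:fpfg}. For ($\Rightarrow$): write $A$ as the $\lambda$-filtered colimit of its $\lambda$-generated subobjects, which a $\lambda$-accessible $F$ preserves, so that any $\lambda$-generated subobject $m\colon M\monoto FA$ factorizes through some $Fm_i$ because $M$ is $\lambda$-generated and hence (here $\B_\lp=\B_\lg$ enters) $\lambda$-presentable. For ($\Leftarrow$): verify the $\lambda$-analogues of properties~\ref{T:finbound:1} and~\ref{T:finbound:2} from the proof of Theorem~\ref{T:finbound}. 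For~\ref{T:finbound:1}, take $c\colon C\to FA$ with $C$ $\lambda$-presentable, factor its image $\Im c\monoto FA$ through $Fm$ for a $\lambda$-generated $m\colon M\monoto A$ by $\lambda$-boundedness, and use $\A_\lp=\A_\lg$ to see $M$ is $\lambda$-presentable; for~\ref{T:finbound:2}, use the $\lambda$-version of Remark~\ref{R:strict}\ref{R:strict:str} exactly as before. By the $\lambda$-version of Corollary~\ref{C:refl} this shows $F$ is $\lambda$-accessible.

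For part~(2) I would follow the proof of Theorem~\ref{T:equiv2semstrict}. The equality $\A_\lp=\A_\lg$ follows from the $\lambda$-version of Example~\ref{E:bounded}\ref{E:bounded:2}: for $A\in\A_\lg$ the hom-functor $\A(A,-)$ is $\lambda$-bounded, hence $\lambda$-accessible by hypothesis, i.e.~$A\in\A_\lp$. For semi-strictness I would first transfer the hypothesis from $\Set$-valued to $\Set^S$-valued functors (for $S$ an arbitrary set) via the coproduct functor $C\colon\Set^S\to\Set$, which creates $\lambda$-filtered colimits regardless of $|S|$, together with the $\lambda$-version of Example~\ref{E:bounded}\ref{E:bounded:1}. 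Then, given $b\colon B\to A$ with $B\in\A_\lp$, take $S=\A_\lp$ and the same functor $F\colon\A\to\Set^S$ as in the proof of Theorem~\ref{T:equiv2semstrict}, namely $FZ=\mathds{1}+(\A(s,Z))_{s\in S}$ when $\A(A,Z)=\emptyset$ and $FZ=\mathds{1}$ otherwise. The proof that $F$ is $\lambda$-bounded is the same, once one notes that the $\lambda$-generated objects of $\Set^S$ are those with fewer than $\lambda$ elements and that $Fm$ is split epic for every $\lambda$-generated $m\colon M\monoto Z$ in the branch $\A(A,Z)\neq\emptyset$. Hence $F$ is $\lambda$-accessible, and since each $\A(s,-)$ with $s\in\A_\lp$ is $\lambda$-accessible, evaluating $F$ at $A=\colim D_A'$ --- where $D_A'$ is the $\lambda$-filtered subdiagram of $D_A$ on the morphisms $b'$ through which $b$ factorizes --- yields, exactly as before, a $b'\colon B'\to A$ in $\A_\lp/A$ with $\A(A,B')\neq\emptyset$; this is the $\lambda$-version of Remark~\ref{R:strict}\ref{R:strict:semi}, so $\A$ is semi-strictly locally $\lambda$-presentable.

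There is no genuine obstacle here: the argument is a mechanical substitution of $\lambda$ for $\aleph_0$, and the one place where the regularity of $\lambda$ is really used --- and hence the point I would spell out --- is the bookkeeping in $\Set^S$ in part~(2): taking a join of fewer than $\lambda$ many $\lambda$-generated subobjects (still $\lambda$-generated, since $\lambda$ is regular) to reduce $\lambda$-boundedness of $F$ to a one-element condition, and checking that the subdiagram $D_A'$ remains $\lambda$-filtered with colimit $A$.
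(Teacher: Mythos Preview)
Your proposal is correct and is exactly the approach the paper takes: its entire proof reads ``The proofs are completely analogous to those of Theorems~\ref{T:boundstrict} and~\ref{T:equiv2semstrict}.'' You have supplied precisely the details one would need to spell out, including the one genuinely new point (regularity of $\lambda$ ensuring that a coproduct of fewer than $\lambda$ objects of $\A_\lp$ stays in $\A_\lp$, and that $D_A'$ remains $\lambda$-filtered).
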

The proofs are completely analogous to those of Theorems~\ref{T:boundstrict} and \ref{T:equiv2semstrict}.
%
%

\begin{remark} Assume that we work in a set theory distinguishing between sets and classes (e.g. Zermelo-Fraenkel theory) or distinguishing universes, so that by  `a class' we take a member of the next higher universe of that of all small sets. 
Then we form a super-large category 
$$\Class$$
of classes and class functions. It plays a central role in the paper of Aczel and Mendler \cite{AczelMen89} on terminal coalgebras. An endofunctor $F$ of  $\Class$ in that paper is called {\em set-based}
if for every class $X$ and every element $x\in FX$ there exists a subset
$i\colon Y\monoto X$ such that $x$ lies in $Fi[FX]$. This corresponds to $\infty$-bounded where $\infty$ stands for `being large'. The corresponding concept of $\infty$-accessibility is evident:
\end{remark}

\begin{definition} A diagram $D\colon \D\to \Class$, with $\D$ not necessarily small, is called {\em $\infty$-filtered} if every small subcategory of $\D$ 
has a cocone in $\D$. An endofunctor of $\Class$ is called {\em $\infty$-accessible} if it preserves colimits of $\infty$-filtered diagrams.
\end{definition}

\begin{proposition} An endofunctor of $\Class$ is set-based iff it is $\infty$-accessible.
\end{proposition}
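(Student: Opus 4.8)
The plan is to mirror the proof of Proposition~\ref{P:finmono} (and its $\lambda$-indexed analogues), replacing `finitely generated subobject' by `small subobject' and `filtered' by `$\infty$-filtered'. One direction is nearly automatic. Suppose $F$ is $\infty$-accessible. Every class $X$ is the colimit of the $\infty$-filtered diagram of its small subsets $i\colon Y\monoto X$ (ordered by inclusion, which is $\infty$-filtered since the union of a small family of small sets is again small). Applying $F$, we get $FX=\colim_{Y} FY$ with colimit injections $Fi$, and since colimits in $\Class$ are computed as in $\Set$, this means $FX=\bigcup_{Y} Fi[FY]$; in particular every $x\in FX$ lies in some $Fi[FY]$, i.e.\ $F$ is set-based. (One should note the mild subtlety that $\Class$ has, and $F$ preserves, $(\text{surjective},\text{injective})$-factorizations, so that ``small subobject'' is unambiguous; this is routine.)

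For the converse, let $F$ be set-based and let $D\colon \D\to\Class$ be an $\infty$-filtered diagram with colimit cocone $c_i\colon D_i\to C$. Since colimits in $\Class$ are built as in $\Set$, the $c_i$ are jointly surjective and any two elements of some $D_i$ identified by $c_i$ are already identified by a connecting map (the analogue of Remark~\ref{R:refl} for $\infty$-filtered diagrams, which holds by the same elementary argument). I want to show $Fc_i\colon FD_i\to FC$ is a colimit cocone, i.e.\ that it is jointly surjective and collectively ``reflects identifications''. For joint surjectivity: take $x\in FC$. By set-basedness there is a small subset $m_0\colon M_0\monoto FC$ with $x\in Fm_0[FM_0]$; say $x=Fm_0(y)$. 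Now $m_0[M_0]$ is a small subset of $FC$, and since $C=\bigcup_i c_i[D_i]$ is a union over an $\infty$-filtered diagram, a small subset of $C$ factors through some $c_i$ — more precisely, choose for each of the (small-many) elements of $m_0[M_0]$ a preimage under some $c_{i}$, and use $\infty$-filteredness to find a single $i$ above all the indices used; composing with connecting maps we get $m_0$ factoring through $c_i$ via some $m'\colon M_0\to D_i$. Then $x=Fm_0(y)=F(c_i)\bigl(Fm'(y)\bigr)$ is in the image of $Fc_i$.

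The remaining point is the collective reflection of identifications: if $u,v\in FD_i$ satisfy $Fc_i(u)=Fc_i(v)$, they must be merged by some connecting map $FD_i\to FD_j$. This is the step I expect to be the main obstacle, since set-basedness on its own controls only the ``surjective'' behaviour of $F$, not how it merges elements — compare the role of preservation of monomorphisms in Proposition~\ref{P:finmono}. The resolution is the same as there: one should observe that set-based endofunctors of $\Class$ automatically preserve injective maps (equivalently monomorphisms). This is a known feature of the $\Class$ setting — it follows from the fact that an injection $f\colon X\monoto Y$ of classes is a split monomorphism whenever $X\ne\emptyset$ (pick a retraction pointwise; for $X=\emptyset$ the claim is trivial), and functors preserve split monos. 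Once $F$ preserves monos, every $Fc_i$ is a monomorphism (as each $c_i$ is, being a colimit injection of a diagram which, after the standard reduction, may be taken to consist of monomorphisms — or more simply: reflection of identifications by the $Fc_i$ is vacuous because each $Fc_i$ is injective). Hence $Fc_i(u)=Fc_i(v)$ forces $u=v$, and joint surjectivity together with this triviality of the merging condition shows, via the $\infty$-filtered analogue of Remark~\ref{R:refl}, that $(Fc_i)$ is the colimit cocone of $FD$. Therefore $F$ is $\infty$-accessible.
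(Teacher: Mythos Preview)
Your argument has a real gap in the merging step of the ``set-based $\Rightarrow$ $\infty$-accessible'' direction. You assert that each colimit injection $c_i\colon D_i\to C$ of an arbitrary $\infty$-filtered diagram is a monomorphism, so that $Fc_i$ is injective and the merging condition is vacuous. This is false: take any $\infty$-filtered diagram with a non-injective connecting map into a terminal object (e.g.\ $\{a,b\}\to\{\ast\}$); the diagram is trivially $\infty$-filtered, and the injection $c_0$ from $\{a,b\}$ into the colimit $\{\ast\}$ is not monic. Your ``standard reduction to a diagram of monomorphisms'' does not rescue this: replacing each $D_i$ by its image $c_i[D_i]$ in $C$ yields a new diagram, but the surjections $e_i\colon D_i\twoheadrightarrow c_i[D_i]$ are \emph{not} connecting maps of the original $D$, so knowing $Fe_i(u)=Fe_i(v)$ does not give a connecting map of $D$ merging $u$ and $v$. (Incidentally, preservation of monos holds for \emph{every} endofunctor of $\Class$, since nonempty-domain injections split; set-basedness plays no role there.)

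The paper handles the merging condition differently: it first observes that $\Class$ is \emph{strictly locally $\infty$-presentable}, i.e.\ for every $b\colon B\to A$ with $B$ small there exist $f\colon A\to B'$ and $b'\colon B'\to A$ with $B'$ small and $b=b'\cdot f\cdot b$ (same split-mono argument as for $\Set$). Then one mimics Theorem~\ref{T:boundstrict}: working with the canonical diagram of $A$, if $Fb\cdot u=Fb\cdot v$ then $h:=f\cdot b$ is a connecting morphism (since $b=b'\cdot h$) and $Fh\cdot u=Ff\cdot Fb\cdot u=Ff\cdot Fb\cdot v=Fh\cdot v$. Your mono-preservation observation \emph{can} be made to work, but only after first reducing to canonical colimits (the $\infty$-analogue of Corollary~\ref{C:refl}) and then arguing as in Theorem~\ref{T:finbound}: factor $b=m\cdot e$ with $m$ monic; since the image is small, $e$ is a connecting map of the canonical diagram, $Fm$ is monic, and $Fb\cdot u=Fb\cdot v$ forces $Fe\cdot u=Fe\cdot v$. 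You did not perform this reduction.
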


\begin{proof}(1) For every morphism $b\colon B\to A$ in $\Class$ with $B$ small
  factorizes in $\Set/A$ through a morphism $b'\colon B'\to A$ in $\Set/A$ where the factorization $f$ fulfils $b=b'\cdot (f\cdot b)$.
(Shortly: $\Class$ is strictly locally $\infty$-presentable.) The proof is the same as that of Example \ref{E:set}(2).

(2) The rest is completely analogous to part (1) of the proof of Theorem \ref{T:boundstrict}
\end{proof}

\begin{remark} Assuming, moreover, that all proper classes are mutually bijective, it follows that {\em every} endofunctor on $\Class$ is $\infty$-accessible, see \cite{AMV04}.
\end{remark}

%
%
\bibliographystyle{myabbrv}
\bibliography{refs}

\newpage
\begin{appendix}
  \section{Details on  Single-Orbit Nominal Sets}
  In this appendix we prove that in the category $\Nom$ of nominal
  sets there are (up to isomorphism) only countably many nominal sets
  having only one orbit. To this end we consider the nominal sets
  $\V^{\#n}$ of injective maps from $n=\{0,1,\dots,\, n-1\}$ to $\V$.
  The group action on $\V^{\#n}$ is component-wise, in other words, it
  is given by postcomposition: for $t\colon n \monoto \V$ and
  $\pi \in \perms(\V)$ (which is a bijective map $\pi\colon \V \to \V$)
  the group action is the composed map
  $\pi \cdot t\colon n \monoto \V$.  Thus, for every $t:n\monoto \V$
  of $\V^{\#n}$, $\supp(t)=\{t(i)\mid i<n\}$.
  \begin{lemma}
    Up to isomorphism, there are only countably many single-orbit nominal sets.
  \end{lemma}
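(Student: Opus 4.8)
The plan is to classify single-orbit nominal sets by their behaviour relative to the sets $\V^{\#n}$ of injective tuples, and then count.

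First I would recall the standard fact that every single-orbit nominal set $X$ arises as a quotient of some $\V^{\#n}$: pick any $x_0\in X$ with $\supp(x_0)=\{a_0,\dots,a_{n-1}\}$ a set of size $n$ (choosing an enumeration), and define $q\colon \V^{\#n}\to X$ by $q(\pi\cdot\langle a_0,\dots,a_{n-1}\rangle)=\pi\cdot x_0$. This is well defined because $\perms(\V)$ acts transitively on $\V^{\#n}$ and because $\pi$ fixes $\supp(x_0)$ pointwise implies $\pi\cdot x_0=x_0$; it is clearly equivariant and surjective onto the orbit of $x_0$, which is all of $X$. So every single-orbit nominal set is, up to isomorphism, a quotient $\V^{\#n}/\!\!\sim$ for some $n<\omega$ by an equivariant equivalence relation $\sim$.

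Next I would bound, for each fixed $n$, the number of such quotients. The key observation is that an equivariant equivalence relation $\sim$ on $\V^{\#n}$ is determined by the equivalence class of a single fixed tuple $t_0=\langle 0,1,\dots,n-1\rangle$ (using some fixed $n$-element subset of $\V$), again by transitivity of the action: $s\sim s'$ iff there is $\pi$ with $\pi\cdot t_0=s$ and some $\rho$ with $\rho\cdot t_0 = s'$ and $\rho^{-1}\pi$ lies in the stabilizer-coset data encoded by $[t_0]_\sim$. More concretely, $[t_0]_\sim = \{\pi\cdot t_0 : \pi\in H\}$ for a suitable subset $H\subseteq\perms(\V)$, and equivariance forces $H$ to be a union of double cosets of the pointwise stabilizer $\Stab(t_0)$ of $\{0,\dots,n-1\}$; since $\Stab(t_0)$ has countable index is not quite the point — rather, the relevant combinatorial data (which cosets of $\Stab(t_0)$ modulo the finite symmetric group permuting the coordinates get identified) is a subset of a countable set, so there are at most $2^{\aleph_0}$ a priori, but in fact one checks the class of $t_0$ is an equivariant-under-$\Stab(t_0)$ subset of the countable orbit $\V^{\#n}$, hence determined by finitely-much data up to the action, giving only countably many possibilities. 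Summing over all $n<\omega$ then yields countably many single-orbit nominal sets in total.

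The main obstacle I expect is making the counting in the previous paragraph genuinely tight: the naive bound via ``subsets of a countable set'' only gives $2^{\aleph_0}$, so the real work is to argue that an equivariant equivalence class $[t_0]_\sim\subseteq\V^{\#n}$ is itself a nominal set with finite support contained in $\{0,\dots,n-1\}$ and finitely many orbits under $\Stab(t_0)$ — equivalently, that it is a union of finitely many $\Stab(t_0)$-orbits in $\V^{\#n}$, and there are only finitely many such orbits to begin with (indexed by how the coordinates of a tuple relate to $\{0,\dots,n-1\}$, a finite amount of bookkeeping). Once that finiteness is in hand, each $n$ contributes only finitely (or at worst countably) many quotients, and the countable union over $n$ finishes the proof. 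I would therefore structure the argument as: (i) every single-orbit nominal set is a quotient of some $\V^{\#n}$; (ii) equivariant equivalence relations on $\V^{\#n}$ correspond to $\Stab(t_0)$-closed subsets of the finite orbit set, of which there are finitely many; (iii) take the union over $n<\omega$.
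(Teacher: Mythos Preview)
Your outline in the final paragraph is correct and would yield a valid proof, though by a somewhat different route than the paper's. You count equivariant equivalence relations on $\V^{\#n}$ via two observations: (a) such a relation $\sim$ is determined by the single class $[t_0]_\sim$ (because $\perms(\V)$ acts transitively on $\V^{\#n}$, so $s\sim s'$ iff $\pi^{-1}\cdot s'\in[t_0]_\sim$ for any $\pi$ with $\pi\cdot t_0=s$), and (b) $[t_0]_\sim$ is $\Stab(t_0)$-invariant while $\V^{\#n}$ has only finitely many $\Stab(t_0)$-orbits (indeed, these are indexed by partial injections $n\rightharpoonup n$ recording which coordinates land in $\supp(t_0)$ and where). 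This gives a finite upper bound on the number of quotients of each $\V^{\#n}$, and summing over $n$ finishes. The paper instead restricts to those quotients of $\V^{\#n}$ that preserve support size (which is automatic once $n=|\supp(x_0)|$) and establishes a \emph{bijection} between these and subgroups of the finite group $\perms(n)$, via $S=\{\sigma\in\perms(n)\mid t_0\cdot\sigma\sim t_0\}$. The paper's route yields an exact classification of single-orbit nominal sets; yours only gives an upper bound but is more direct, since it avoids checking that $S$ is a subgroup and that the correspondence is bijective.

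Two remarks on the write-up. First, your middle paragraph (double cosets, countable index, ``$2^{\aleph_0}$ a priori'') is exploratory noise and should be cut in favour of the clean argument of the final paragraph. Second, make observation (a) explicit: without knowing that $\sim\mapsto[t_0]_\sim$ is injective, the finiteness of the $\Stab(t_0)$-orbit set does not by itself bound the number of equivariant equivalence relations.
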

  \proof
    Every single-orbit nominal set $Q$ whose elements have supports of
    cardinality $n$ is a quotient of the (single-orbit) nominal set
    $\V^{\# n}$ (see~\cite[Exercise~5.1]{Pitts13}).  Indeed, if
    $Q=\{ \pi\cdot x \mid \pi \in \perms(\V)\}$ with
    $\supp(x)=\{a_0,\ldots,a_{n-1}\}$, let $t\colon n \monoto \V$ be
    the element of $\V^{\# n}$ with $t(i)=a_i$ and define
    $q\colon \V^{\# n} \epito Q$ as follows: for every
    $u\in \V^{\# n}$ it is clear that there is some
    $\pi \in \perms(\V)$ with $u=\pi \cdot t$; put $q(u)=\pi \cdot
    x$. This way, $q$ is well-defined (since
    $\supp(x)=\{t(i)\mid i<n\}$) and equivariant.

    For every $n\in \Nat$, the quotients of $\V^{\#n}$ are given by equivariant
    equivalence relations on $\V^{\#n}$. We prove that we have a bijective
    correspondence between the set of all quotients with $|\supp([t]_{\sim})| =
    n$ for all $t \in \V^{\#n}$ and the set of all subgroups of $\perms(n)$.
    \begin{enumerate}
    \item Given an equivariant equivalence $\sim$ on $\V^{\# n}$ put
      \[
        S = \{ \sigma \in \perms(n) \mid \forall (t\colon n\monoto
        \V)\colon t\cdot \sigma  \sim t\}.
      \]
      Note that since $\sim$ is equivariant (and composition of
      maps is associative), $\forall$ can equivalently be replaced by
      $\exists$:
      \[
        S = \{ \sigma \in \perms(n) \mid \exists (t\colon n\monoto
        \V)\colon t\cdot \sigma \sim t\}.
      \]
      It is easy to verify that $S$ is a subgroup of
      $\perms(n)$. Moreover, we have that, for every $t,u\in \V^{\#n}$,
      \begin{equation}\label{Aa}
        t\sim u \quad\iff\quad u=t\cdot \sigma\quad \text{for some $\sigma \in S$}.
      \end{equation}
      Indeed, ``$\Longleftarrow$'' is obvious. For ``$\Longrightarrow$'' suppose that
      $t\sim u$. Since $|\supp([t]_{\sim})|=n$, we have that
      $\supp(t)=\supp([t]_{\sim})=\supp([u]_{\sim})=\supp(u)$; thus,
      there is some $\sigma \in \perms(n)$ such that $u=t\cdot
      \sigma$. Consequently, $t \sim t\cdot \sigma$,
      showing that $\sigma \in S$.

      \enlargethispage{1pt}
    \item For every subgroup $S$ of $\perms(n)$, it is clear that the
      relation $\sim$ defined by \eqref{Aa} is an equivariant
      equivalence. We show that, moreover, $|\supp([t]_{\sim})|=n$ for
      every $t\in \V^{\#n}$. We have $|\supp([t]_\sim)| \le n$ because
      the canonical quotient map $[-]_\sim$ is equivariant. In order
      to see that $|\supp([t]_\sim)|$ is not smaller than $n$, assume
      $a \in \supp(t)\setminus \supp([t]_\sim)$ and take any element
      $b\not\in \supp(t)$. Then $(a\,b)\cdot [t]_\sim = [t]_\sim$,
      i.e.~there is some $\sigma\in \perms(n)$ with
      $(a\,b)\cdot t\cdot \sigma = t$, which is a contradiction to
      $b\not\in\supp(t) = \supp(t \cdot \sigma) = \{t(i) \mid i <
      n\}$.

    \item It remains to show that, given two subgroups $S$ and $S'$
      which determine the same equivariant equivalence relations
      $\sim$ via \eqref{Aa}, then $S=S'$. Indeed, given $\sigma\in S$,
      we have $t = (t \cdot \sigma) \cdot \sigma^{-1}$ and therefore
      $t\cdot \sigma \sim t$ for every $t\in \V^{\#n}$. By \eqref{Aa}
      applied to $S'$, this implies that
      $t =t\cdot\sigma \cdot \sigma'$ for some $\sigma' \in S'$. Since
      $t$ is monic, we obtain $\sigma \cdot \sigma'=\id_n$,
      i.e.~$\sigma=(\sigma')^{-1}\in S'$. This proves $S\subseteq S'$,
      and the reverse inclusion holds by symmetry.  \endproof
  \end{enumerate}
\end{appendix}
\end{document}